\newtheorem{dfn}{Definition}[section]
\newtheorem{thm}[dfn]{Theorem}
\newtheorem{prop}[dfn]{Proposition}
\newtheorem{lem}[dfn]{Lemma}
\newtheorem{cor}[dfn]{Corollary}
\newtheorem{rem}[dfn]{Remark}
\newtheorem{car}[dfn]{Computer Assisted Result}
\newtheorem{ass}[dfn]{Assumption}
\newtheorem{alg}{Algorithm}
\newtheorem{notation}[dfn]{Notation}
\newcommand{\Inv}{{\rm Inv}}
\newcommand{\Int}{{\rm int}}
\newcommand{\re}{{\rm Re}}
\newcommand{\diag}{{\rm diag}}
\newcommand{\sgn}{{\rm sgn}}
\newcommand{\im}{{\rm Im}}
\newcommand{\exit}{{\rm exit}}
\newcommand{\ent}{{\rm ent}}
\numberwithin{equation}{section}
\begin{document}

\title{Rigorous numerics of tubular, conic, star-shaped neighborhoods of slow manifolds for fast-slow systems}

\author{Kaname Matsue\thanks{Institute of Mathematics for Industry, Kyushu University, Fukuoka 819-0395, Japan {\tt kmatsue@imi.kyushu-u.ac.jp}} $^{,}$ \footnote{International Institute for Carbon-Neutral Energy Research (WPI-I$^2$CNER), Kyushu University, Fukuoka 819-0395, Japan}
}
\lhead{Kaname Matsue}
\rhead{Rigorous Numerics of Tubular Neighborhoods of Slow Manifolds}
\maketitle

\begin{abstract}
We provide a rigorous numerical computation method to validate tubular neighborhoods of normally hyperbolic slow manifolds with the explicit radii for the fast-slow system
\begin{equation*}
\begin{cases}
x' = f(x,y,\epsilon), & \\
y' =\epsilon g(x,y,\epsilon). &
\end{cases}
\end{equation*}
Our main focus is the validation of the continuous family of eigenpairs $\{\lambda_i(y;\epsilon), u_i(y;\epsilon)\}_{i=1}^n$ of $f_x(h_\epsilon(y),y,\epsilon)$ over the slow manifold $S_\epsilon = \{x = h_\epsilon(y)\}$ admitting the graph representation.
In order to obtain such a family, we apply the interval Newton-like method with rigorous numerics.
The validated family of eigenvectors generates a vector bundle over $S_\epsilon$ determining normally hyperbolic eigendirections rigorously.
The generated vector bundle enables us to construct a tubular neighborhood centered at slow manifolds with explicit radii.
Combining rate conditions for providing smoothness of center-(un)stable manifolds, we can validate smooth tubular neighborhoods with diffeomorphic family of affine change of coordinates, as well as several extensions such as conic and star-shaped neighborhoods.
Our procedure provides a systematic construction of smooth neighborhoods of slow manifolds in an explicit range $[0,\epsilon_0]$ of $\epsilon$ with rigorous numerics.
\end{abstract}

{\bf Keywords:} fast-slow systems, tubular neighborhoods of slow manifolds, rigorous numerics.

\bigskip
{\bf AMS subject classifications : } 34A26, 34A30, 37D20,  57R25, 65L11

\section{Introduction}
In this paper, we consider the dynamical system in $\mathbb{R}^n \times \mathbb{R}^l$ of the following form:
\begin{equation}
\label{fast-slow}
\begin{cases}
x' = f(x,y,\epsilon), & \\
y' =\epsilon g(x,y,\epsilon), &
\end{cases}
\end{equation}
where $' = d/dt$ is the time derivative and $f,g$ are $C^r$-functions with $r\geq 1$. 
The factor $\epsilon$ is a nonnegative but sufficiently small real number. 
We shall write (\ref{fast-slow}) as (\ref{fast-slow})$_\epsilon$ if we explicitly represent the $\epsilon$-dependence of the system.
The system (\ref{fast-slow}) can be reformulated with a change of time-scale variable as 
\begin{equation}
\label{slow-fast}
\begin{cases}
\epsilon \dot x = f(x,y,\epsilon), & \\
\dot y = g(x,y,\epsilon), &
\end{cases}
\end{equation}
where $\dot{} = d/d\tau$ and $\tau = t/\epsilon$. 
One tries to analyze the dynamics of (\ref{fast-slow}), 
equivalently (\ref{slow-fast}), by suitably combining the dynamics of the {\em layer problem}
\begin{equation}
\label{layer}
\begin{cases}
x' = f(x,y,0), & \\
y' =0, &
\end{cases}
\end{equation}
and the dynamics of the {\em reduced problem}
\begin{equation}
\label{reduced}
\begin{cases}
0 = f(x,y,0), & \\
\dot y = g(x,y,0), &
\end{cases}
\end{equation}
which are the limiting problems for $\epsilon = 0$ on the fast and the slow time scale, respectively. 
Notice that (\ref{reduced}) makes sense {\em only on $f(x,y,0)=0$}, while (\ref{layer}) makes sense in whole $\mathbb{R}^{n+l}$ as the $y$-parameter family of $x$-systems. 
The meaning of the \lq\lq $\epsilon\to 0$-limit" is thus different between (\ref{fast-slow}) and (\ref{slow-fast}). 
This is why (\ref{fast-slow}) or (\ref{slow-fast}) is a kind of {\em singular perturbation problems}. 
In particular, (\ref{fast-slow}) or (\ref{slow-fast}) is known as {\em fast-slow systems} (or {\em slow-fast systems}), where $x$ dominates the behavior in the fast time scale and $y$ dominates the behavior in the slow time scale.
\subsection{Our aim in this paper}
There are mainly two approaches for understanding dynamics of (\ref{fast-slow})$_\epsilon$: an analytic approach (asymptotic expansion of solutions) and a geometric one (geometric singular perturbation theory).
We shall focus on the geometric one here.
The key concept in the geometric singular perturbation theory is a {\em slow manifold}, a perturbation of critical manifolds which are subsets of nullcline $\{(x,y)\mid f(x,y,0)=0\}$.
Fenichel has proved in \cite{F1979} that, under the normal hyperbolicity, critical manifolds perturb to slow manifolds for sufficiently small $\epsilon > 0$.
A series of his results, which is often called {\em invariant manifold theorems} for fast-slow systems, is nowadays the basis of geometric singular perturbation theory (e,g, \cite{GS, GJM2012, GK, Jones1984, Jones, JKK, JK, KS, L, Mat2, Smo, S, SW2001, TKJ}). 
\par
Concrete studies of dynamics around slow manifolds are often operated under the assumption that slow manifolds are given by the graphs of smooth functions, e.g., $S_\epsilon = \{(x,y,\epsilon)\mid x = h(y,\epsilon)\}$.
Moreover, for simplicity, the slow manifold $S_\epsilon$ is assumed to lie in the subspace $\{x=0\}$ via a nonlinear smooth transformation (e.g. \cite{Jones, JK}), which consequently yields the coordinate system around $S_\epsilon$ so that
\begin{equation}
\label{slow-mfd-straight}
S_\epsilon = \{(0,y,\epsilon)\},\quad \epsilon\in [0,\epsilon_0]
\end{equation}
for some $\epsilon_0$. 
Furthermore, the coordinate system can be chosen so that stable and unstable fibers with base points on $S_\epsilon$ are linear invariant subspaces.
A series of transformation yields the resulting vector field which is often called {\em Fenichel normal form} and is the center of considerations for advanced analysis in fast-slow systems such as the Exchange Lemma (e.g. \cite{JKK, JK, L, TKJ}).
\par
Our focus in this paper relates to such coordinate systems and neighborhoods of slow manifolds in these coordinates from the viewpoint of numerical validations.
The above change of coordinates is realized in the abstract setting in general.
If we apply the above ideas to concrete systems, we have to obtain the (nonlinear) change of coordinates rigorously, which is a nontrivial problem and deeply depends on systems.
If we do not have an explicit way to compute such change of coordinates, which will be almost cases, it is natural to apply numerical calculations to computing slow manifolds first.
In this case, we can {\em never} obtain rigorous change of coordinates to the desiring one, which is due to various numerical errors (roundings, truncations and so on).
On the other hand, several works to validate slow manifolds as well as global trajectories for (\ref{fast-slow})$_\epsilon$ in an explicit range $[0,\epsilon_0]$ of $\epsilon$ with {\em rigorous numerics} based on interval arithmetic (e.g., \cite{Tbook}) have appeared very recently (e.g. \cite{CZ, GJM2012, Mat2}).
All of approaches therein produce neighborhoods of slow manifolds corresponding to tubular neighborhoods in appropriate senses for their aims. 
Nevertheless, they include more or less restrictions on applicability, such as dimensions of phase spaces, choice of candidates, geometry of neighborhoods and so on (see Section \ref{section-preceding} for details).
\par
\bigskip
In this paper, we aim at providing a procedure of tubular neighborhoods of slow manifolds {\em in a non-empirical way} with computer assistance.
An essence of our procedure is the rigorous continuous families of eigenpairs of the linearized matrix $f_x(h_\epsilon(y),y,\epsilon)$ on slow manifold $S_\epsilon = \{x=h_\epsilon(y)\mid y\in Y\}$ for some compact set $Y\subset \mathbb{R}^l$, which is a standard issue of numerical linear algebra, to obtain the coordinate system $(a,b,y)$ as follows:
\begin{equation*}
S_\epsilon = \{x=h_\epsilon(y) = 0\},\quad (\ref{fast-slow})_\epsilon \Leftrightarrow \begin{cases}
a' = A(y)a + F_1(a,b,y,\epsilon), &\\
b' = B(y)b + F_2(a,b,y,\epsilon), &\\
y' = \epsilon g(a,b,y,\epsilon) &
\end{cases}\text{ around }S_\epsilon,
\end{equation*}
where $A$ and $B$ are diagonal matrices such that $0 < \lambda_A < \re \lambda$ holds for all $\lambda \in {\rm Spec}(A(y))$ and $y\in Y$, and that $0 > \lambda_B > \re \lambda$ holds for all $\lambda \in {\rm Spec}(B(y))$ and $y\in Y$.
We combine an enclosing procedure of eigenpairs with validations of slow manifolds to obtain vector bundles over slow manifolds, which is a byproduct for constructing tubular neighborhoods.
A standard approach for constructing {\em isolating blocks} \cite{ZM} with the validated families of eigenpairs gives a smooth family of isolating blocks, namely, a tubular neighborhood of slow manifolds with explicit radii.

\par
\bigskip
The rest of this paper is organized as follows.
In Section \ref{section-preliminary}, we gather preliminaries of isolating blocks, and invariant manifold validations with computer assistance as well as related topics for general systems provided by Capi\'{n}ski and Zgliczy\'{n}ski \cite{CZ2015, CZ2016}.
In Section \ref{section-inv-mfd}, we apply these preliminaries to (\ref{fast-slow}) for validating smooth slow manifolds.
Discussions there contain existence arguments in \cite{Mat2}.
In Section \ref{section-eigenpairs}, we provide a validation procedure of a continuous family of eigenpairs with fixed norms of eigenfunctions for continuous real matrix-valued functions. 
This procedure works not only for real eigenvalues but also complex ones.
In Section \ref{section-main}, we provide algorithms for validating slow manifolds, associated vector bundles and tubular neighborhoods of validated slow manifolds with explicit radii, which can be validated for general fast-slow systems with rigorous numerics.
We also provide a procedure of extended neighborhoods of slow manifolds called conic and star-shaped neighborhoods centered at slow manifolds.
Sample validation results are shown in Section \ref{section-examples} for demonstrating the applicability.

\subsection{Preceding approaches for validating enclosures of slow manifolds}
\label{section-preceding}
Before moving to concrete discussions, we briefly compare several preceding works for validating slow manifolds with rigorous numerics.
The essential issues of unsolved problems in corresponding preceding works are written in bold letters.

\subsubsection{Gameiro-Gedeon-Kalies-Kokubu-Mishcaikow-Oka \cite{GGKKMO}}
In \cite{GGKKMO}, a rigorous numerical procedure of {\em singular isolating neighborhoods}, a terminology of isolations in the (singularly perturbed) Conley index theory (e.g. \cite{Mis, Smo}), is discussed.
This is a purely topological approach.
Authors provide a systematic way to construct singular isolating neighborhoods with the help of polygonal approximation of flows in \cite{BKM2007}, which gives us a polygonal decomposition of critical manifolds so that flows intersect all boundaries of polygons transversely.
\par
An essential question in our direction remains open there whether we can validate isolations of slow manifolds as well as slow flows {\bf with an explicit range $[0,\epsilon_0]$ of $\epsilon$}.
Moreover, constructions of isolating neighborhoods based on multi-value map validations may contain extra regions enclosing true trajectories, which cause the wrong accuracy of targeting objects.

\subsubsection{Guckenheimer-Johnson-Meerkamp \cite{GJM2012}}
Authors of \cite{GJM2012} discuss validations of enclosures of slow manifolds. 
In that paper authors concentrate on fast-slow systems {\em with one fast variable and two slow variables}, which aims at validations of singular Hopf bifurcations.
The basis of their procedure is the triangulation of critical manifolds and computations of left and right correction (perturbation) terms of slow manifolds which enclose rigorous slow manifolds with an explicit range $[0,\epsilon_0]$. 
Extension of this method {\bf in more general systems} remains open.

\subsubsection{Czechowski-Zgliczy\'{n}ski \cite{CZ}}
In \cite{CZ}, rigorous numerical validations of periodic orbits for the FitzHugh-Nagumo system, which is well-known as an example of (\ref{fast-slow}), with an explicit range $[0,\epsilon_0]$ of $\epsilon$ is discussed.
Authors validate periodic orbits of the FitzHugh-Nagumo system with specific parameter values by a topological notion called {\em covering relations} (e.g. \cite{ZCov}) with appropriate estimates of vector fields with computer assistance.
Validation of slow manifolds in their context is a construction of {\em isolating segments}, rectangular domain containing compact potion of slow manifolds such that flows intersect boundary transversely in the fast direction.
A remarkable point of this work is a realization of the bridge between singularly perturbed trajectories and ones with a standard approach such as Newton-like method via the $\epsilon$-continuation.
On the other hand, the choice of isolating segments may contain more or less artificial trial and error.
It remains open whether we can choose appropriate isolating segments corresponding to tubular neighborhoods {\bf in a non-empirical way}.

\subsubsection{Matsue \cite{Mat2}}
In \cite{Mat2}, rigorous numerical validations of global trajectories for (\ref{fast-slow}) such as periodic, homoclinic and heteroclinic orbits with an explicit range $[0,\epsilon_0]$ of $\epsilon$ is discussed.
Ideas for validating slow manifolds are based on Jones' discussion in \cite{Jones} as well as a systematic procedure of isolating blocks by \cite{ZM}.
The author also validate cone conditions based on \cite{Jones} and \cite{ZCov}, which guarantees normal hyperbolicity of slow manifolds as well as invariant foliations of stable and unstable manifolds.
This approach takes account of the essence of geometric singular perturbation theory.
Unlike \cite{CZ}, however, there is a restriction of the parameter range $[0,\epsilon_0]$ validating trajectories, which is mainly because small pieces of slow manifolds are attached globally and systematically, but not smoothly via fast-saddle-type blocks; a counterpart of tubular neighborhoods in local setting.
Comparing \cite{CZ}, {\bf the realization of smooth blocks, or smooth and global attachments of small blocks} is of importance for larger $\epsilon$-continuation of trajectories and extension to multi-dimensional slow variables.
Finally note that the above unsolved tasks in the paper \cite{Mat2} motivate the current issue.

\section{Preliminaries}
\label{section-preliminary}
In this section, we gather fundamental tools we need in this paper, which consists of quick reviews of isolating blocks, invariant manifolds and their validations, and related topics.

\subsection{Validations of isolating blocks : review}
\label{section-block}

A concept of {\em isolating blocks} are typically discussed in the Conley index theory (e.g. \cite{Con, Mis}), which studies structures of isolated invariant sets from the algebraic-topological viewpoint.
Central notions are {\em isolating neighborhoods} or {\em index pairs} in the Conley index theory, but we concentrate our attentions on {\em isolating blocks} defined as follows. 
In our case, the blocks can be considered very flexible from the viewpoint of rigorous numerics.
Moreover, isolating blocks play central roles for the existence of slow manifolds, which is discussed in Section \ref{section-inv-mfd}.
Here we review the definition of isolating blocks and its applications to fast-slow systems with computer assistance. 
Detailed discussions in our setting are shown in \cite{Mat2}.

%
%
\begin{dfn}[Isolating block]\rm
\label{dfn-isolation}
Let $N\subset \mathbb{R}^m$ be a compact set. We say $N$ an {\em isolating neighborhood} if $\Inv(N)\subset \Int (N)$ holds, where 
\begin{equation*}
\Inv(N):= \{x\in N \mid \varphi(\mathbb{R},x)\subset N\}
\end{equation*}
for a flow $\varphi: \mathbb{R}\times \mathbb{R}^m\to \mathbb{R}^m$ on $\mathbb{R}^m$.
Next let $B\subset \mathbb{R}^m$ be a compact set and $x\in \partial B$. We say $x$ an {\em exit} ({\em resp. entrance}) point of $B$, if for every solution $\sigma:[-\delta_1,\delta_2]\to \mathbb{R}^m$ through $x= \sigma(0)$, with $\delta_1\geq 0$ and $\delta_2 > 0$ there are $0\leq \epsilon_1 \leq \delta_1$ and $0 < \epsilon_2 \leq \delta_2$ such that for $0 < t \leq \epsilon_2$,
\begin{equation*}
\sigma(t)\not \in B\ (\text{resp. } \sigma(t)\in \Int(B)),
\end{equation*}
and for $-\epsilon_1 \leq t < 0$,
\begin{equation*}
\sigma(t)\not \in \partial B\ (\text{resp. } \sigma(t)\not \in B)
\end{equation*}
hold. $B^{\exit}$ (resp. $B^{\ent}$) denote the set of all exit (resp. entrance) points of the closed set $B$. We call $B^{\exit}$ and $B^{\ent}$ {\em the exit} and {\em the entrance} of $B$, respectively.
Finally $B$ is called {\em an isolating block} if $\partial B = B^{\exit}\cup B^{\ent}$ holds and $B^{\exit}$ is closed in $\partial B$.
\end{dfn}
Obviously, an isolating block is also an isolating neighborhood.
There is a preceding work for the systematic construction of isolating blocks around equilibria \cite{ZM}. 
This method is generalized to (\ref{fast-slow})$_\epsilon$ in \cite{Mat2}, which validates slow manifolds as shown in Section \ref{section-inv-mfd}.
Here we review the {\em predictor-corrector approach} for detecting approximate centers of blocks.
One will see that such procedures are very suitable for analyzing dynamics around invariant manifolds.

\begin{dfn}[cf. \cite{ZG, ZCov, CZ2015}]\rm
\label{dfn-hset}
Let also ${\bf B}_m(x,R)$ be the $m$-dimensional open ball with the center at $x\in \mathbb{R}^m$ and radius $R$.
Let also ${\bf B}_m(R)$ be the $m$-dimensional open ball with the center at the origin, namely, $x=0$ and radius $R$.
Similarly, let ${\bf B}_m$ be the $m$-dimensional open unit ball, namely, $x=0$ and $R=1$.
\par
An {\em $h$-set} consists of the following set, integers and a map:
\begin{itemize}
\item A compact subset $N\subset \mathbb{R}^m$.
\item Nonnegative integers $u(N)$ and $s(N)$ such that $u(N) + s(N) = n$ with $n\leq m$.
\item A homeomorphism $h_N:\mathbb{R}^n\to \mathbb{R}^{u(N)}\times \mathbb{R}^{s(N)}$ satisfying
\begin{equation*}
h_N(N) = \overline{{\bf B}_{u(N)}}\times \overline{{\bf B}_{s(N)}}.
\end{equation*}
\end{itemize}
Similarly, a {\em $ch$-set} consists of the following set, integers and a map:
\begin{itemize}
\item A compact subset $N\subset \mathbb{R}^m$.
\item Nonnegative integers $u(N), s(N)$ and $c(N)$ such that $u(N) + s(N) + c(N) = n$ with $n\leq m$.
\item A homeomorphism $h_N:\mathbb{R}^n\to \mathbb{R}^{u(N)}\times \mathbb{R}^{s(N)}\times \mathbb{R}^{c(N)}$ satisfying
\begin{equation*}
h_N(N) = \overline{{\bf B}_{u(N)}}\times \overline{{\bf B}_{s(N)}}\times \overline{{\bf B}_{c(N)}}.
\end{equation*}
\end{itemize}
Finally define the {\em dimension} of an $h$-set or a $ch$-set $N$ by $\dim N:= n$.
\end{dfn}
We shall write an $h$-set $(N,u(N),s(N),h_N)$ or a $ch$-set $(N,u(N),s(N),c(N),h_N)$ simply by $N$ if no confusion arises. 

\par
\bigskip
Let $\epsilon_0 > 0$ be given
and $(\bar x, \bar y)$ be a (numerical) equilibrium for (\ref{layer}), i.e., $f(\bar x, \bar y,0)\approx 0$, such that $f_x(\bar x, \bar y,0)$ is invertible.
Let $Y$ be a compact neighborhood of $\bar y$ in $\mathbb{R}^l$.
We set the candidate of \lq\lq center line" as follows:
\begin{equation}
\label{new-center}
\left(\bar x + \frac{dx}{dy}(\bar y)(y-\bar y),y\right)\equiv \left(\bar x - f_x(\bar x, \bar y)^{-1}f_y(\bar x, \bar y)(y-\bar y),y\right),
\end{equation}
where $x = x(y)$ is the parametrization of $x$ with respect to $y$ such that $\bar x = x(\bar y)$ and that $f(x(y),y, 0) = 0$, which is actually realized in a small neighborhood of $\bar y$ in $\mathbb{R}^l$ since $f_x(\bar x, \bar y)$ is invertible. Obviously, the identification in (\ref{new-center}) makes sense, which thanks to the Implicit Function Theorem.
\par
Around the center line, we define the affine transformation $T : (z,w)\mapsto (x,y)$ as
\begin{equation*}
(x,y) = T(z,w) := \left(Pz + \bar x - f_x(\bar x, \bar y)^{-1}f_y(\bar x, \bar y)w, w + \bar y\right).
\end{equation*}
where $P$ is a nonsingular matrix diagonalizing $f_x(\bar x, \bar y)$.
In the new $(z,w)$-coordinate, the fast system $x' = f(x,y,\epsilon)$ is transformed into the following:
\begin{align}
\notag
z' &= P^{-1}\left(x' + \overline{f_x}^{-1}\overline{f_y}w'\right)\\
\notag
 	&= P^{-1}\left( f(x,y,\epsilon) + \epsilon\overline{f_x}^{-1}\overline{f_y}g(x,y,\epsilon) \right)\\
\notag
	&= P^{-1}\left( \overline{f_x}(Pz - \overline{f_x}^{-1}\overline{f_y} w) + \hat f(z,w,\epsilon)+ \epsilon \overline{f_x}^{-1}\overline{f_y}g(x,y,\epsilon) \right)\\
\notag
	&= \Lambda z + P^{-1}\left(-\overline{f_y}w + \hat f(z,w,\epsilon)+ \epsilon \overline{f_x}^{-1}\overline{f_y}g\left(Pz + \bar x - \overline{f_x}^{-1} \overline{f_y} w, w + \bar y, \epsilon\right) \right)\\
\label{diag-fast}
	&\equiv  \Lambda z + F(z,w,\epsilon),
\end{align}
where $\overline{f_x} = f_x(\bar x, \bar y)$ and $\overline{f_y}=f_y(\bar x, \bar y)$, and 
\begin{equation*}
\Lambda = \begin{pmatrix}
A & O \\ O & B
\end{pmatrix},\quad A = \diag(\lambda^a_1,\cdots, \lambda^a_{n_u}), \quad B = \diag(\lambda^b_1,\cdots, \lambda^b_{n_s})
\end{equation*}
with $n_u + n_s = n$.
Here every $\lambda_j^a$ and $\lambda_j^b$ is assumed to be real\footnote
{
For the case that eigenvalues $\{\lambda_j\}$ contain complex conjugate pairs, see \cite{ZM} or \cite{Mat2}.
Here we note that we also have the corresponding procedure of isolating blocks even in such a case.
}
for simplicity.
The function $\hat f(z,w,\epsilon)$ denotes the higher order term of $f$ with $O(|z|^2, |w|)$.
Dividing $z$ into $(a,b)$ corresponding to eigenvalues with positive real parts and negative real parts, respectively, we can construct a candidate of desiring blocks.
\par
Note that the higher order term $\hat f(z,w,\epsilon)$ contains the linear term of $w$ as $\overline{f_y}w$ with small errors in a sufficiently small neighborhood $Y$ of $\bar y$.
(\ref{diag-fast}) indicates that the $w$-linear terms are also canceled out in the predictor-corrector approach.
In particular, the residual term $F(z,w,\epsilon)$ is chosen to be $O(|z|^2, |z||w|, |w|^2)$.
\par
We rewrite (\ref{diag-fast}) as the (approximately) block diagonal form:
\begin{equation}
\label{ode-ab-coord}
a' = Aa + F_1(a,b,y,\epsilon),\quad b' = Bb + F_2(a,b,y,\epsilon).
\end{equation}
$F_1$ and $F_2$ are higher order terms depending on $p_0$ and $y_0$. 
Equivalently, writing (\ref{ode-ab-coord}) component-wise, 
\begin{align*}
a_j' &= \lambda_j^a a_j  + F_{1,j}(x,y,\epsilon),\quad \lambda_j^a > 0,\quad j=1,\cdots, n_u,\\
b_j' &= \lambda_j^b b_j  + F_{2,j}(x,y,\epsilon),\quad \lambda_j^b < 0,\quad j=1,\cdots, n_s.
\end{align*}
Let $E\subset \mathbb{R}^n$ be a compact set containing $p_0$. 
Now we assume that each $F_{i,j_i}$, $i=1,2$, $j_1 = 1,\cdots, n_u$, $j_2 = 1,\cdots, n_s$, admits the following enclosure with respect to $E\times Y \times [0,\epsilon_0]$: 
\begin{equation}
\label{error-bounds-fast}
\left\{ F_{i,j_i}(x,y,\epsilon) \mid (x,y)=T(z,w)\in E\times Y, \epsilon \in [0,\epsilon_0]\right\}\subsetneq [\delta_{i,j_i}^-, \delta_{i,j_i}^+].
\end{equation}
Define the set $D_c\subset \mathbb{R}^{n+l}$ by the following: 
\begin{equation}
\label{fast-block}
D_c:= \prod_{j=1}^{n_u} [a_j^-, a_j^+]\times \prod_{j=1}^{n_s} [b_j^-, b_j^+]\times Y,\quad
[a_j^-, a_j^+]:= \left[-\frac{\delta_{1,j}^+}{\lambda^a_j},-\frac{\delta_{1,j}^-}{\lambda^a_j} \right],\quad
[b_j^-, b_j^+]:= \left[-\frac{\delta_{2,j}^-}{\lambda^b_j},-\frac{\delta_{2,j}^+}{\lambda^b_j} \right].
\end{equation}
A series of estimates for error terms involves $E\times Y$ and it only makes sense if it is self-consistent, namely, $TD_c\subset E\times Y$.
Under this self-consistence, we immediately know that
\begin{align*}
a_j ' > 0&\quad \forall (a,b,y,\epsilon) \in D_c\times [0,\epsilon_0] \text{ with }a_j= a_j^+,\\
a_j ' < 0&\quad \forall (a,b,y,\epsilon) \in D_c\times [0,\epsilon_0] \text{ with }a_j= a_j^-,\\
b_j ' < 0&\quad \forall (a,b,y,\epsilon) \in D_c\times [0,\epsilon_0] \text{ with }b_j= b_j^+,\\
b_j ' > 0&\quad \forall (a,b,y,\epsilon) \in D_c\times [0,\epsilon_0] \text{ with }b_j= b_j^-.
\end{align*}
If $\epsilon = 0$, the set $D_c$ is nothing but the isolating block for (\ref{ode-ab-coord})$_0$, equivalently (\ref{layer}).
Once such an isolating block $D_c$ is constructed, one obtains an equilibrium in $TD_c$. 
\begin{prop}[cf. \cite{ZM}]\rm
\label{prop-existence-fixpt}
Let $TD_c$ be an isolating block constructed as above. 
In particular, $TD_c\subset E\times Y$ is assumed.
Then $TD_c$ contains an equilibrium of (\ref{layer}) for all $y \in Y$. 
\end{prop}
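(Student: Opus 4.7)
The plan is to fix an arbitrary $y_0 \in Y$ and exhibit an equilibrium of the layer problem $x' = f(x, y_0, 0)$ whose image in the $(a,b,y)$ coordinates lies in the $y_0$-slice of $D_c$. Passing through the affine change of variables $T$, an equilibrium for fixed $y = y_0$ is a pair $(a,b)$ satisfying $Aa + F_1(a,b,y_0,0) = 0$ and $Bb + F_2(a,b,y_0,0) = 0$. Since $A$ and $B$ are diagonal with nonzero entries, this system is equivalent to the fixed-point equation $(a,b) = \Psi_{y_0}(a,b)$ for
\begin{equation*}
\Psi_{y_0}(a,b) := \bigl(-A^{-1}F_1(a,b,y_0,0),\, -B^{-1}F_2(a,b,y_0,0)\bigr).
\end{equation*}

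The second step is to verify that $\Psi_{y_0}$ continuously self-maps the slice
\begin{equation*}
D_c^{y_0} := \prod_{j=1}^{n_u}[a_j^-, a_j^+]\times \prod_{j=1}^{n_s}[b_j^-, b_j^+].
\end{equation*}
Continuity will be immediate from the smoothness of $f$, so the actual work is in checking $\Psi_{y_0}(D_c^{y_0}) \subset D_c^{y_0}$. Here the self-consistency hypothesis $TD_c\subset E\times Y$ licenses the enclosure (\ref{error-bounds-fast}) at every $(a,b)\in D_c^{y_0}$: for each $j$, combining $F_{1,j}\in [\delta_{1,j}^-, \delta_{1,j}^+]$ with $\lambda_j^a > 0$ gives $-F_{1,j}/\lambda_j^a \in [-\delta_{1,j}^+/\lambda_j^a, -\delta_{1,j}^-/\lambda_j^a] = [a_j^-, a_j^+]$ by (\ref{fast-block}), and the symmetric computation with $\lambda_j^b < 0$ places $-F_{2,j}/\lambda_j^b$ in $[b_j^-, b_j^+]$. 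Since $D_c^{y_0}$ is a nonempty, compact, convex subset of $\mathbb{R}^n$, Brouwer's fixed point theorem will then produce a fixed point of $\Psi_{y_0}$, i.e.\ an equilibrium of (\ref{ode-ab-coord})$_0$ in $D_c^{y_0}$; applying $T$ and letting $y_0$ range over $Y$ yields the asserted family of equilibria.

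The one place requiring care is ensuring that the composition $F_{i,j}\circ T$ is legitimately evaluated on the whole of $D_c^{y_0}$ for every $y_0\in Y$, so that the enclosure (\ref{error-bounds-fast}) can be invoked uniformly. This is exactly what the self-consistency $TD_c\subset E\times Y$ guarantees; without it, the bounds would only control $F_1, F_2$ on a strict subset of $D_c$ and the self-map property could fail on the complement. A complementary viewpoint that reinforces the conclusion is that the boundary sign conditions displayed just above (\ref{fast-block}) already make $D_c^{y_0}$ an isolating block of saddle type for the $y_0$-fixed layer problem, whose Conley index has the homotopy type of a pointed $n_u$-sphere and is therefore nontrivial, forcing $\Inv(D_c^{y_0})\neq \emptyset$. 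I would however favor the explicit Brouwer argument, since it is self-contained and does not appeal to Conley-index machinery.
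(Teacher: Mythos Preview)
Your argument is correct. The paper itself does not give a detailed proof: it simply states that the proposition is a consequence of general Conley index theory and cites \cite{Mc}. Your route is genuinely different and more elementary: you recast the equilibrium equation for the $y_0$-slice as a fixed-point problem $\Psi_{y_0}(a,b)=(a,b)$, verify via the enclosure (\ref{error-bounds-fast}) and the definitions of $[a_j^-,a_j^+]$, $[b_j^-,b_j^+]$ that $\Psi_{y_0}$ continuously self-maps the rectangle $D_c^{y_0}$, and invoke Brouwer. This is self-contained and avoids the Conley-index machinery entirely; the price is that Brouwer gives no information about uniqueness or hyperbolicity of the equilibrium, but the paper's Conley-index statement does not claim those either (they are obtained later from cone and rate conditions). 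Your closing remark about the saddle-type Conley index being a pointed $n_u$-sphere is exactly the argument the paper has in mind when citing \cite{Mc}. One minor slip: the boundary sign conditions you refer to are displayed just \emph{below} (\ref{fast-block}), not above.
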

This proposition is the consequence of general theory of the Conley index (\cite{Mc}). 
Note that the construction of isolating blocks stated in Proposition \ref{prop-existence-fixpt} around points which are {\em not necessarily equilibria} implies the existence of {\em rigorous} equilibria inside blocks.
With an additional property such as uniqueness or hyperbolicity of equilibria, 
this procedure will provide the smooth $y$-parameter family of equilibria, 
which is stated in Theorem \ref{thm-smooth-slow-mfd}. 
\par
Remark that the above inequalities hold for all $\epsilon \in [0,\epsilon_0]$. This observation is the key point of the construction not only of limiting critical manifolds but of slow manifolds for $\epsilon \in (0,\epsilon_0]$.

\begin{dfn}\rm
Let $D_c\subset \mathbb{R}^{n+l}$ be a $ch$-set constructed by (\ref{fast-block}). 
Assume that $Y = \overline{{\bf B}_l} \subset \mathbb{R}^l$. 
We say $D_c$, equivalently $D:= TD_c$, {\em an affine fast-saddle-type block}. 
Moreover, set
\begin{align*}
D_c^{f,-} &:= \{(a,b,y)\in D_c \mid a_j =a_j^\pm,\ j=1,\cdots, n_u\},\\
D_c^{f,+} &:= \{(a,b,y)\in D_c \mid b_j =b_j^\pm,\ j=1,\cdots, n_s\},\\
D_c^{slow} &:= \{(a,b,y)\in D_c \mid y\in \partial {\bf B}_l\},\\
D^{f,-} &:= TD_c^{f,-},\quad D^{f,+}:= TD_c^{f,+},\quad D^{slow}:= TD_c^{slow}.
\end{align*}
We say $D_c^{f,-}$ (equivalently $D^{f,-}$) {\em the fast-exit of $D$} and $D_c^{f,+}$ (equivalently $D^{f,+}$) {\em the fast-entrance of $D$}. 
\end{dfn}
%


\begin{rem}\rm
We do not assume the transversality of flows on $D_{\bar y} = D\cap \{y=\bar y\}$ with $T(x,\bar y)\in \overline{{\bf B}_{n_u}}\times \overline{{\bf B}_{n_s}} \times \partial {\bf B}_l$.
\end{rem}

\bigskip
This construction can be slightly extended as follows. Let $\{\eta^\alpha\}_{\alpha = u,s}$ be a pair of positive numbers. Defining 
\begin{align}
\notag
&\hat D_c:= \prod_{j=1}^{n_u} [\hat a_j^-, \hat a_j^+]\times \prod_{j=1}^{n_s} [\hat b_j^-, \hat b_j^+]\times Y,\\
\label{fast-block-2}
&[\hat a_j^-, \hat a_j^+]:= \left[-\frac{\delta_{1,j}^+}{\lambda^a_j} - \eta^u,-\frac{\delta_{1,j}^-}{\lambda^a_j} + \eta^u \right],\quad
[\hat b_j^-, \hat b_j^+]:= \left[-\frac{\delta_{2,j}^-}{\lambda^b_j} - \eta^s,-\frac{\delta_{2,j}^+}{\lambda^b_j} + \eta^s \right],
\end{align}
we can prove that $\hat D_c$ is also an affine fast-saddle-type block if $T\hat D_c \subset E\times Y$ holds. We further know
\begin{align*}
a_j ' > 0&\quad \forall (a,b,y,\epsilon) \in \hat D_c\times [0,\epsilon_0] \text{ with }a_j\in [a_j^+, \hat a_j^+],\\
a_j ' < 0&\quad \forall (a,b,y,\epsilon) \in \hat D_c\times [0,\epsilon_0] \text{ with }a_j\in [\hat a_j^-, a_j^-],\\
b_j ' < 0&\quad \forall (a,b,y,\epsilon) \in \hat D_c\times [0,\epsilon_0] \text{ with }b_j\in [b_j^+, \hat b_j^+]\\
b_j ' > 0&\quad \forall (a,b,y,\epsilon) \in \hat D_c\times [0,\epsilon_0] \text{ with }b_j\in [\hat b_j^-, b_j^-].
\end{align*}

This extension leads to the explicit lower bound estimate of distance between $\hat D^{f,\pm}$ and slow manifolds.

\subsection{Logarithmic norms}
\label{section-CZ}
The basic strategy of our smoothness validation is an application of the following result shown in \cite{CZ2015} to time-$t$ maps $\varphi_\epsilon(t,\cdot)$ for (\ref{fast-slow})$_\epsilon$ with sufficiently small $t>0$.
Here we briefly review the smoothness validation procedures of center-(un)stable manifolds discussed in \cite{CZ2015, CZ2016}.

\bigskip
Before our main discussions, we give several notations in this subsection.
\begin{dfn}\rm
For a squared matrix $A\in \mathbb{R}^{n\times n}$, define the matrix norm $m(A)$ by
\begin{equation*}
m(A) = \inf_{z\in \mathbb{R}^n, \|z\| =1}\|Az\|,
\end{equation*}
which in general depends on the norm $\|\cdot \|$ on $\mathbb{R}^n$.
The {\em logarithmic norm} of $A$ denoted by $l(A)$ is given by
\begin{equation*}
l(A) = \lim_{h\to +0}\frac{\|I+hA\|-1}{h}
\end{equation*}
and the {\em logarithmic minimum} of $A$ is given by
\begin{equation*}
m_l(A) = \lim_{h\to +0}\frac{m(I+hA)-1}{h}.
\end{equation*}
\end{dfn}

We gather several fundamental facts of $l(A)$, $m(A)$ and $m_l(A)$ in the following lemma.
\begin{lem}[cf. \cite{CZ2015, CZ2016}]
\label{lem-log}
\begin{enumerate}
\item The limits in the definition of $l(A)$ and $m_l(A)$ exist and we have $m_l(A)=-l(-A)$.
\item For the Euclidean norm, we also have
\begin{align*}
l(A) &= \max\{\lambda \in {\rm Spec}((A+A^T)/2)\},\quad 
m_l(A) = \min\{\lambda \in {\rm Spec}((A+A^T)/2)\}.
\end{align*}
\item Assume $A\in W$ for some compact set $W\subset \mathbb{R}^{n\times n}$.
Assume that $h\in (0,h_0]$ for some $h_0 > 0$. Then we have
\begin{align*}
\|I+hA\| &= 1+hl(A) + r_1(h,A),\quad \|r_1(h,A)\|\leq M_1h^2,\\
m(I+hA) &= 1+hm_l(A) + r_2(h,A),\quad \|r_2(h,A)\|\leq M_2h^2
\end{align*}
for some constants $M_i = M_i(h_0,W) > 0$.
\end{enumerate}
\end{lem}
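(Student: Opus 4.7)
The plan is to prove parts (1) and (3) in tandem via convexity and a Neumann/Taylor expansion, and then to read off part (2) from a direct spectral computation in the Hilbert case.

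I would first observe that $\phi_A(h) := \|I+hA\|$ is convex in $h \geq 0$, since
\[
\phi_A(th_0 + (1-t)h_1) = \|t(I+h_0 A) + (1-t)(I+h_1 A)\| \leq t\phi_A(h_0) + (1-t)\phi_A(h_1).
\]
With $\phi_A(0) = 1$, this forces $(\phi_A(h)-1)/h$ to be nondecreasing on $(0,\infty)$; the reverse triangle inequality bounds it below by $-\|A\|$, so the right-limit at $h=0$ exists, equals $l(A) = \inf_{h>0}(\phi_A(h)-1)/h$, and automatically yields the lower bound $\phi_A(h) \geq 1 + hl(A)$. For the matching quadratic upper bound in (3) I would combine the classical inequality $\|e^{hA}\| \leq e^{hl(A)}$ (obtained by passing $n \to \infty$ in $\|I+(h/n)A\|^n \leq (1+(h/n)l(A)+o(h/n))^n \leq e^{hl(A)+n\cdot o(h/n)}$) with the explicit remainder
\[
\bigl\| e^{hA} - (I+hA) \bigr\| = \Bigl\| \sum_{k \geq 2} \tfrac{h^k}{k!} A^k \Bigr\| \leq \tfrac{1}{2}h^2 \|A\|^2 e^{h\|A\|}.
\]
Adding these gives $\|I+hA\| \leq 1 + hl(A) + M_1 h^2$ with $M_1 = M_1(h_0,W)$ uniform by compactness of $W$.

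Next, for $m_l$ and the identity $m_l(A) = -l(-A)$, I would exploit $m(B) = \|B^{-1}\|^{-1}$ whenever $B$ is invertible. For $h \in (0,h_0]$ small enough (uniformly over the compact set $W$), $I+hA$ is invertible and the Neumann series gives
\[
(I+hA)^{-1} = I + h(-A) + R(h,A), \qquad \|R(h,A)\| \leq C_W h^2.
\]
Applying the expansion just proved with $A$ replaced by $-A$ yields $\|(I+hA)^{-1}\| = 1 + hl(-A) + O(h^2)$ uniformly in $W$, and inverting via the geometric series gives $m(I+hA) = 1 - hl(-A) + O(h^2)$. Reading off the linear term simultaneously establishes existence of $m_l(A)$, the identity $m_l(A) = -l(-A)$, and the quadratic remainder on $r_2$ with uniform $M_2(h_0, W)$.

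Part (2) then reduces to a Hermitian perturbation calculation: $\|I+hA\|_2^2 = \lambda_{\max}((I+hA)^\top(I+hA)) = \lambda_{\max}(I + 2hS + h^2 A^\top A)$ with $S = (A+A^\top)/2$, which expands as $1 + 2h\lambda_{\max}(S) + O(h^2)$. Taking square roots and matching with the expansion in (3) identifies $l(A) = \lambda_{\max}(S) = \max\mathrm{Spec}(S)$, and then $m_l(A) = -l(-A) = -\max\mathrm{Spec}(-S) = \min\mathrm{Spec}(S)$. The main obstacle is the matching quadratic upper bound on $r_1$ in a \emph{general} norm, since convexity supplies only the one-sided lower bound $r_1 \geq 0$; the matrix-exponential bound $\|e^{hA}\| \leq e^{hl(A)}$, together with its explicit Taylor remainder, is what plugs that gap, after which the rest of the lemma reduces to Neumann series, geometric inversion, and compactness bookkeeping.
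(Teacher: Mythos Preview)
The paper states this lemma with a citation to \cite{CZ2015, CZ2016} and gives no proof of its own, so there is nothing to compare your argument against. Your proof is correct and self-contained.

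Two small points worth tightening. First, in your derivation of $\|e^{hA}\|\le e^{hl(A)}$, the step $\|I+(h/n)A\|^n \le (1+(h/n)l(A)+o(h/n))^n$ should really be written as an equality: the definition of $l(A)$ as a limit gives $\|I+sA\|=1+sl(A)+o(s)$, not an upper bound; the conclusion is unaffected because it is the limit $n\to\infty$ (combined with $e^{hA}=\lim_n(I+(h/n)A)^n$ and lower semicontinuity of the norm) that yields the desired inequality. Second, your Neumann-series argument for $r_2$ only covers $h$ small enough that $I+hA$ is invertible uniformly over $W$. Extending to an arbitrary $h_0>0$ is automatic by compactness: on $W\times[\delta,h_0]$ with any fixed $\delta>0$, both $m(I+hA)$ and $1+hm_l(A)$ are uniformly bounded and $h^{-2}\le\delta^{-2}$, so $|r_2|/h^2$ is bounded there, and you already control the regime $h\in(0,\delta]$.
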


We further have the following lemma, which is used for validating existence and smoothness of slow manifolds for (\ref{fast-slow}).
\begin{lem}
\label{lem-monotome-log}
Let $A$ be a square matrix and $B$ is a square positive semidefinite matrix.
Then, with the matrix operator norm $\|A\|$ induced by the Euclidean norm, we have
\begin{equation*}
l(A) \leq l(A+B),\quad m_l(A) \leq m_l(A+B).
\end{equation*}
\end{lem}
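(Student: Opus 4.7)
The plan is to reduce both inequalities to a Rayleigh-quotient comparison between symmetric matrices, using the explicit formulas for the logarithmic norm and logarithmic minimum supplied by item 2 of Lemma \ref{lem-log}. Write $S_M := (M + M^T)/2$ for the symmetric part of any square matrix $M$. Then Lemma \ref{lem-log} gives $l(A) = \lambda_{\max}(S_A)$, $l(A+B) = \lambda_{\max}(S_{A+B}) = \lambda_{\max}(S_A + S_B)$, and analogously $m_l(A) = \lambda_{\min}(S_A)$, $m_l(A+B) = \lambda_{\min}(S_A + S_B)$.

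The first step is to observe that $S_B$ is positive semidefinite: for every $x \in \mathbb{R}^n$, $x^T S_B x = \tfrac{1}{2}(x^T B x + x^T B^T x) = x^T B x \geq 0$ by hypothesis on $B$. The second step is to invoke the Courant--Fischer (Rayleigh quotient) characterization $\lambda_{\max}(S) = \max_{\|x\|=1} x^T S x$ and $\lambda_{\min}(S) = \min_{\|x\|=1} x^T S x$ for symmetric $S$. For the logarithmic-norm inequality, pick $x_0$ achieving $\lambda_{\max}(S_A)$; then
\begin{equation*}
\lambda_{\max}(S_A + S_B) \geq x_0^T(S_A + S_B)x_0 = \lambda_{\max}(S_A) + x_0^T S_B x_0 \geq \lambda_{\max}(S_A),
\end{equation*}
which yields $l(A) \leq l(A+B)$. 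For the logarithmic-minimum inequality, use that $x^T(S_A + S_B)x \geq x^T S_A x$ holds pointwise on the unit sphere, so taking the minimum over $\|x\|=1$ preserves the inequality and gives $\lambda_{\min}(S_A + S_B) \geq \lambda_{\min}(S_A)$, i.e.\ $m_l(A) \leq m_l(A+B)$.

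There is essentially no analytic obstacle here; the only mild subtlety is clarifying the convention for \emph{positive semidefinite} when $B$ is not assumed symmetric, but the computation $x^T S_B x = x^T B x$ handles this cleanly, so the proof reduces to two lines once the symmetric-part identity from Lemma \ref{lem-log} is in place. Alternatively, one could argue directly from the definition $l(A) = \lim_{h\to +0}(\|I+hA\|-1)/h$ by comparing $\|(I+hA)x\|^2$ with $\|(I+h(A+B))x\|^2$ at a near-maximizing unit vector, but that route is longer and ultimately relies on the same positivity of $x^T S_B x$.
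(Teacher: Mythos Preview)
Your proof is correct and follows essentially the same route as the paper: reduce to symmetric parts via Lemma~\ref{lem-log}, note that $S_B$ is positive semidefinite, and compare Rayleigh quotients at the maximizing (respectively minimizing) unit vector. The paper's only cosmetic difference is that for the $m_l$ inequality it picks the eigenvector of $S_{A+B}$ rather than arguing pointwise, but the content is identical.
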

\begin{proof}
Let $S(A)$ be the symmetrization of $A$: namely, $S(A) = (A+A^T)/2$.
Now Lemma \ref{lem-log}-2 shows that $l(A)$ is the maximum eigenvalue of $S(A)$.
Let $z$ be the associated eigenvector of $l(A)$ with $|z| = 1$. 
Then we have
\begin{align*}
l(A) &= l(A)|z|^2 = z^T S(A)z\\
	&\leq z^T S(A)z + z^T S(B)z = z^T S(A+B)z \\
	&\leq l(A+B)|z|^2 = l(A+B),
\end{align*}
which shows $l(A)\leq l(A+B)$.
\par
Similarly, let $w$ be the associated eigenvector of $m_l(A+B)$ with $|w| = 1$. 
Now Lemma \ref{lem-log}-2 again shows that $m_l(A+B)$ is the minimum eigenvalue of $S(A+B)$.
Thus we have
\begin{align*}
m_l(A+B) &= m_l(A+B)|w|^2 = w^T S(A+B)w = w^T S(A)w + w^T S(B)w\\
	&\geq w^T S(A)w \\
	&\geq m(A)|w|^2 = m_l(A),
\end{align*}
which shows $m_l(A)\leq m_l(A+B)$.
\end{proof}

\begin{lem}
\label{lem-max-min}
Let $A\in \mathbb{R}^{n\times n}$ be a matrix and $x\in \mathbb{R}^n$.
Then, under the standard Euclidean norm, the following inequality holds:
\begin{equation*}
m_l(A)|x|^2 \leq x^T A x \leq l(A) |x|^2.
\end{equation*}
\end{lem}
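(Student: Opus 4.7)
The plan is to reduce the inequality to a standard Rayleigh-quotient estimate for the symmetric part of $A$, using the characterization of $l(A)$ and $m_l(A)$ provided by Lemma \ref{lem-log}-2. Write $S(A) = (A+A^T)/2$. The crucial (and completely elementary) observation is that for any $x \in \mathbb{R}^n$,
\begin{equation*}
x^T A x = \tfrac{1}{2}\bigl(x^T A x + (x^T A x)^T\bigr) = \tfrac{1}{2}\bigl(x^T A x + x^T A^T x\bigr) = x^T S(A) x,
\end{equation*}
since $x^T A x$ is a scalar and therefore equals its transpose.

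Next I would invoke the fact that $S(A)$ is symmetric, hence orthogonally diagonalizable with real eigenvalues. By Lemma \ref{lem-log}-2 these eigenvalues are bounded below by $m_l(A)$ and above by $l(A)$. The Rayleigh-Ritz principle then yields, for every $x \neq 0$,
\begin{equation*}
m_l(A) \;\leq\; \frac{x^T S(A) x}{x^T x} \;\leq\; l(A),
\end{equation*}
which, after multiplying through by $|x|^2 = x^T x > 0$ and combining with the identity above, gives the claimed chain
\begin{equation*}
m_l(A)|x|^2 \;\leq\; x^T A x \;\leq\; l(A)|x|^2.
\end{equation*}
The case $x = 0$ is trivial since both sides vanish.

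I do not anticipate a real obstacle here: the whole argument is two lines once one notices that only the symmetric part of $A$ contributes to the quadratic form, and Lemma \ref{lem-log}-2 has already done the work of identifying $l(A)$ and $m_l(A)$ with the extreme eigenvalues of $S(A)$. The only mild subtlety is remembering to symmetrize $A$ before applying Rayleigh-Ritz; without this step the eigenvalues of $A$ itself need not even be real and the bounds would fail.
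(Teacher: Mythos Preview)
Your proof is correct and follows essentially the same approach as the paper: symmetrize $A$ via $x^T A x = x^T S(A) x$, then apply the Rayleigh-quotient bounds for the symmetric matrix $S(A)$ together with Lemma~\ref{lem-log}-2 to identify the extreme eigenvalues of $S(A)$ with $m_l(A)$ and $l(A)$.
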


\begin{proof}
In general, $x^T A x = (x^T A x)^T = x^T A^T x$, and hence we have
\begin{equation*}
x^T Ax = \frac{1}{2} (x^T A x + x^T A^T x) = x^T S(A) x,\quad S(A) = \frac{1}{2}(A+A^T).
\end{equation*}
In general, the inequality $\lambda_{\min} |x|^2 \leq x^T B x \leq \lambda_{\max} |x|^2$ holds for any symmetric matrix $B$, where $\lambda_{\min}$ and $\lambda_{\max}$ are the minimum and the maximum eigenvalue of $B$, respectively. 
Apply this inequality to $B=S(A)$, and we have our statement by using Lemma \ref{lem-log}-2.
\end{proof}

\subsection{Rate conditions for flows and maps}
\label{section-rate}
In this subsection, we review results in \cite{CZ2015, CZ2016} concerning with the existence and smoothness of invariant manifolds, called {\em rate conditions} with a few modifications, which is partially discussed in \cite{Mat2} for proving the existence of slow manifolds for (\ref{fast-slow}).

\par
First consider the vector field\footnote
{
If the vector field depends on parameters $\epsilon$, we incorporate parameters into center variable $y$ with trivial evolution $\epsilon'=0$.
}:
\begin{align}
\label{flow-param}
&z' = F(z),\quad F : \mathbb{R}^{n_u + n_s + l} \to \mathbb{R}^{n_u + n_s + l},\\
\notag
&z = (a,b,y)^T\in \mathbb{R}^{n_u + n_s + l}, \quad F(z) = (F_a(z), F_b(z), F_y(z))^T.
\end{align}
The map $F$ is assumed to be $C^{k+1}$ for $k\geq 1$.

\begin{dfn}[Rate conditions for flows, cf. \cite{CZ2016}]\rm
\label{dfn-rate-flow}
Consider (\ref{flow-param}) and $D\subset \mathbb{R}^{n_u + n_s + l}$ be a $ch$-set.
For $M > 1$, let
\begin{align*}
\overrightarrow{\mu_{s,1}} &= \overrightarrow{\mu_{s,1}}(D) = \sup_{z\in D} \left\{ l\left(\frac{\partial F_b}{\partial b}(z) \right) + \frac{1}{M} \left\| \frac{\partial F_b}{\partial (a,y)}(z)\right\|\right\}, \\
\overrightarrow{\mu_{s,2}} &= \overrightarrow{\mu_{s,2}}(D) = \sup_{z\in D} \left\{ l\left(\frac{\partial F_b}{\partial b}(z) \right) + M \left\| \frac{\partial F_{(a,y)}}{\partial b}(z)\right\|\right\},\\ 
\overrightarrow{\xi_{u,1}} &= \overrightarrow{\xi_{u,1}}(D) = \inf_{z\in D}m_l\left( \frac{\partial F_a}{\partial a}(z) \right) - \frac{1}{M}\sup_{z\in D} \left\| \frac{\partial F_a}{\partial (b,y)}(z) \right\|,\\
\overrightarrow{\xi_{u,2}} &= \overrightarrow{\xi_{u,2}}(D) = \inf_{z\in D} \left\{ m_l\left( \frac{\partial F_a}{\partial a}(z) \right) - M\left\| \frac{\partial F_{(b,y)}}{\partial a}(z) \right\| \right\},\\
\overrightarrow{\mu_{ss,1}} &= \overrightarrow{\mu_{ss,1}}(D) = \sup_{z\in D} \left\{ l\left(\frac{\partial F_{(b,y)}}{\partial (b,y)}(z) \right) + M \left\| \frac{\partial F_{(b,y)}}{\partial a}(z)\right\|\right\}, \\ 
\overrightarrow{\mu_{ss,2}} &= \overrightarrow{\mu_{ss,2}}(D) = \sup_{z\in D} \left\{ l\left(\frac{\partial F_{(b,y)}}{\partial (b,y)}(z) \right) + \frac{1}{M}\left\| \frac{\partial F_a}{\partial (b,y)}(z)\right\|\right\},\\ 
\overrightarrow{\xi_{su,1}} &= \overrightarrow{\xi_{su,1}}(D) = \inf_{z\in D}m_l\left( \frac{\partial F_{(a,y)}}{\partial (a,y)}(z) \right) - M \sup_{z\in D} \left\| \frac{\partial F_{(a,y)}}{\partial b}(z) \right\|,\\
\overrightarrow{\xi_{su,2}} &= \overrightarrow{\xi_{su,2}}(D) = \inf_{z\in D} \left\{ m_l\left( \frac{\partial F_{(a,y)}}{\partial (a,y)}(z) \right) - \frac{1}{M}\left\| \frac{\partial F_b}{\partial (a,y)}(z) \right\| \right\}.
\end{align*}
We shall call these constants the {\em (local) rates} of $F$ in $D$.
\par
For $k\geq 1$, we say that the vector field $F$ satisfies the {\em rate condition of order $k$} in $D$ if, for all $j\in \{1,\cdots, k\}$,
\begin{align}
\label{rate-fss-1}
&\overrightarrow{\mu_{s,1}} < 0 < \overrightarrow{\xi_{u,1}},\\
\label{rate-fss-2}
&\overrightarrow{\mu_{ss,1}} < \overrightarrow{\xi_{u,1}},\quad \overrightarrow{\mu_{s,1}} < \overrightarrow{\xi_{su,1}},\\
\label{rate-fss-3}
&(j+1)\overrightarrow{\mu_{ss,1}} < \overrightarrow{\xi_{u,2}},\quad \overrightarrow{\mu_{s,2}} < (j+1)\overrightarrow{\xi_{su,1}},\\
\label{rate-fss-4}
&\overrightarrow{\mu_{ss,2}} < \overrightarrow{\xi_{u,1}}, \quad \overrightarrow{\mu_{s,1}} < \overrightarrow{\xi_{su,2}}.
\end{align}
\end{dfn}
Note that the rate condition of order $0$ is also discussed in \cite{Mat2} for the existence of (un)stable manifolds of slow manifolds.

\par
\bigskip
Similarly consider the map evolution\footnote
{
If the map depends on parameters $\epsilon$, we incorporate parameters into center variable $y$ with trivial evolution $\epsilon \mapsto \epsilon$.
}
\begin{align}
\label{map-param}
&(\mathbb{R}^{n_u + n_s + l}\ni) z = (a,b,y)^T \mapsto G(z) \in \mathbb{R}^{n_u + n_s + l},\\
\notag
&z\in \mathbb{R}^{n_u + n_s + l},\quad G(z) = (G_a(z), G_b(z), G_y(z))^T.
\end{align}
The map $G$ is assumed to be $C^{k+1}$ for $k\geq 1$.

\begin{dfn}[Rate conditions for maps, cf. \cite{CZ2015}]\rm
\label{dfn-rate-map}
Consider (\ref{map-param}) and $D\subset \mathbb{R}^{n_u + n_s + l}$ be a $ch$-set.
For $M > 1$, let
\begin{align*}
\mu_{s,1} &= \sup_{z\in D} \left\{ \left\|\frac{\partial G_b}{\partial b}(z) \right\| + \frac{1}{M} \left\| \frac{\partial G_b}{\partial (a,y)}(z)\right\|\right\}, \quad 
\mu_{s,2} = \sup_{z\in D} \left\{ \left\|\frac{\partial G_b}{\partial b}(z) \right\| + M \left\| \frac{\partial G_{(a,y)}}{\partial b}(z)\right\|\right\},\\ 
\xi_{u,1} &= \inf_{z\in D}m\left( \frac{\partial G_a}{\partial a}(z) \right) - \frac{1}{M}\sup_{z\in D} \left\| \frac{\partial G_a}{\partial (b,y)}(z) \right\|,\quad
\xi_{u,2} = \inf_{z\in D} \left\{ m\left( \frac{\partial G_a}{\partial a}(z) \right) - M\left\| \frac{\partial G_{(b,y)}}{\partial a}(z) \right\| \right\},\\
\mu_{cs,1} &= \sup_{z\in D} \left\{ \left\|\frac{\partial G_{(b,y)}}{\partial (b,y)}(z) \right\| + M \left\| \frac{\partial G_{(b,y)}}{\partial a}(z)\right\|\right\}, \quad 
\mu_{cs,2} = \sup_{z\in D} \left\{ \left\|\frac{\partial f_{(b,y)}}{\partial (b,y)}(z) \right\| + \frac{1}{M}\left\| \frac{\partial G_a}{\partial (b,y)}(z)\right\|\right\},\\ 
\xi_{cu,1} &= \inf_{z\in D}m\left( \frac{\partial G_{(a,y)}}{\partial (a,y)}(z) \right) - M\sup_{z\in D} \left\| \frac{\partial G_{(a,y)}}{\partial b}(z) \right\|,\quad
\xi_{cu,2} = \inf_{z\in D} \left\{ m\left( \frac{\partial G_{(a,y)}}{\partial (a,y)}(z) \right) - \frac{1}{M}\left\| \frac{\partial G_b}{\partial (a,y)}(z) \right\| \right\}.
\end{align*}
As in the case of vector fields, we shall call these constants the {\em (local) rates} of $G$ in $D$.
\par
We say that $G$ satisfies the {\em rate condition of order $k\geq 1$} if $\xi_{u,1}, \xi_{u,2}, \xi_{cu,1}$ and $\xi_{cu,2}$ are strictly positive, and for all $j\in \{1,\cdots, k\}$, the following inequalities hold true:
\begin{align}
\label{rate-map-1}
&\mu_{s,1} < 1 < \xi_{u,1},\\
\label{rate-map-2}
&\mu_{cs,1} < \xi_{u,1},\quad \mu_{s,1} < \xi_{cu,1},\\
\label{rate-map-3}
&(\mu_{cs,1})^{j+1} < \xi_{u,2},\quad \mu_{s,2} < (\xi_{cu,1})^{j+1},\\
\label{rate-map-4}
&\mu_{cs,2} < \xi_{u,1},\quad \mu_{s,1} < \xi_{cu,2}.
\end{align}
We say that $G$ satisfies the {\em rate condition of order $0$} if only (\ref{rate-map-1}) and (\ref{rate-map-2}) are satisfied.
\end{dfn}

Rate conditions with additional geometric conditions yield the existence and smoothness of invariant manifolds.
Let $\Phi$ be the flow generated by (\ref{flow-param}) and $\Phi_h = \Phi(h,\cdot)$ be the corresponding time-$h$ map.
Then we have the correspondence of rates between for flows and for time-$h$ maps, as stated in Proposition \ref{prop-rate-flow-map} below.
Following notations in \cite{CZ2015}, we shall use one of the pair of variables:
\begin{itemize}
\item ${\rm x} = a$, ${\rm y} = (b,y)$,
\item ${\rm x} = (a, y)$, ${\rm y} = b$.
\end{itemize}
For $M > 0$ and $h>0$, define
\begin{align*}
\overrightarrow{\xi_1(M)} &= \inf_{z\in D}m\left( \frac{\partial F_{\rm x}}{\partial {\rm x}}(z) \right) - M\sup_{z\in D} \left\| \frac{\partial F_{\rm x} }{\partial {\rm y}}(z) \right\|,\quad
\overrightarrow{\xi_2(M)} = \inf_{z\in D} \left\{ m\left( \frac{\partial F_{\rm x} }{\partial {\rm x}}(z) \right) - M\left\| \frac{\partial F_{\rm y}}{\partial {\rm x}}(z) \right\| \right\},\\
\overrightarrow{\mu_1(M)} &= \sup_{z\in D} \left\{ \left\|\frac{\partial F_{\rm y}}{\partial {\rm y}}(z) \right\| + M \left\| \frac{\partial F_{\rm y}}{\partial {\rm x}}(z)\right\|\right\}, \quad 
\overrightarrow{\mu_2(M)} = \sup_{z\in D} \left\{ \left\|\frac{\partial F_{\rm y}}{\partial {\rm y}}(z) \right\| + M\left\| \frac{\partial F_{\rm x} }{\partial {\rm y}}(z)\right\|\right\},\\
\xi_1(h,M) &= \inf_{z\in D}m\left( \frac{\partial \Phi_{\rm x}}{\partial {\rm x}}(h,z) \right) - M\sup_{z\in D} \left\| \frac{\partial \Phi_{\rm x} }{\partial {\rm y}}(h,z) \right\|,\\
\xi_2(h,M) &= \inf_{z\in D} \left\{ m\left( \frac{\partial \Phi_{\rm x} }{\partial {\rm x}}(h,z) \right) - M\left\| \frac{\partial \Phi_{\rm y}}{\partial {\rm x}}(h,z) \right\| \right\},\\
\mu_1(h,M) &= \sup_{z\in D} \left\{ \left\|\frac{\partial \Phi_{\rm y}}{\partial {\rm y}}(h,z) \right\| + M \left\| \frac{\partial \Phi_{\rm y}}{\partial {\rm x}}(h,z)\right\|\right\}, \\
\mu_2(h,M) &= \sup_{z\in D} \left\{ \left\|\frac{\partial \Phi_{\rm y}}{\partial {\rm y}}(h,z) \right\| + M \left\| \frac{\partial f\Phi_{\rm x} }{\partial {\rm y}}(h,z)\right\|\right\}.
\end{align*}

\begin{prop}[Correspondence of rate conditions. cf. Theorem 31 in \cite{CZ2016}]
\label{prop-rate-flow-map}
Let $M, M_1, M_2 > 0$. Then the following assertions hold true:
\begin{enumerate}
\item 
\begin{align}
\xi_1(h,M) &= 1+h \overrightarrow{\xi_1(M)} + O(h^2),\\
\label{rate-flow-map-xi2}
\xi_2(h,M) &= 1+h \overrightarrow{\xi_2(M)} + O(h^2),\\
\mu_1(h,M) &= 1+h \overrightarrow{\mu_1(M)} + O(h^2),\\
\mu_2(h,M) &= 1+h \overrightarrow{\mu_2(M)} + O(h^2).
\end{align}
\item If the inequality $\overrightarrow{\mu_2}(M_1) < (j+1)\overrightarrow{\xi_1}(M_2)$ holds for $j\geq 0$, there is a sufficiently small $h_0 > 0$ such that, for any $h\in (0,h_0)$, the following inequality holds:
\begin{equation*}
\mu_2(h,M_1) < \xi_1(h,M_2)^{j+1}.
\end{equation*}
Similarly, if the inequality $(j+1)\overrightarrow{\mu_1}(M_1) < \overrightarrow{\xi_2}(M_2)$ holds for $j\geq 0$, there is a sufficiently small $h_0 > 0$ such that, for any $h\in (0,h_0)$, the following inequality holds:
\begin{equation}
\label{rate-flow-map-stable1}
\mu_1(h,M_1)^{j+1} < \xi_2(h,M_2).
\end{equation}
\item If $\overrightarrow{\mu_1}(M_1) < \overrightarrow{\xi_1}(M_2)$ holds, there is a sufficiently small $h_0 > 0$ such that for any $h\in (0,h_0)$ the following inequality holds:
\begin{equation*}
\mu_1(h,M_1) < \xi_1(h,M_2).
\end{equation*}
\item If $\overrightarrow{\xi_1}(M) > 0$ holds, there is a sufficiently small $h_0 > 0$ such that for any $h\in (0,h_0)$ the  inequality $\xi_1(h,M) > 1$ holds.
\item If $\overrightarrow{\mu_1}(M) < 0$ holds, there is a sufficiently small $h_0 > 0$ such that for any $h\in (0,h_0)$ the  inequality $\mu_1(h,M) < 1$ holds.
\end{enumerate}
\end{prop}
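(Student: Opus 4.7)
The plan is to deduce all five statements from a single first-order Taylor expansion of the Jacobian of the time-$h$ map $\Phi_h$, applied blockwise via Lemma \ref{lem-log}-3. Writing $\Phi(h,z) = z + hF(z) + R_1(h,z)$ with $\|R_1\|, \|D_z R_1\| = O(h^2)$ uniformly on the compact $ch$-set $D$ (which follows from $C^{k+1}$-regularity of $F$ and standard estimates on the variational equation), the Jacobian blocks decompose as
\begin{equation*}
\frac{\partial \Phi_{\rm x}}{\partial {\rm x}}(h,z) = I + h\frac{\partial F_{\rm x}}{\partial {\rm x}}(z) + O(h^2),\qquad \frac{\partial \Phi_{\rm x}}{\partial {\rm y}}(h,z) = h\frac{\partial F_{\rm x}}{\partial {\rm y}}(z) + O(h^2),
\end{equation*}
with analogous formulas for the $({\rm y},{\rm x})$ and $({\rm y},{\rm y})$ blocks, all remainders uniform in $z\in D$.

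First I would prove Part 1. Applying Lemma \ref{lem-log}-3 with $A = \partial F_{\rm x}/\partial {\rm x}(z)$ gives $m(I+hA) = 1 + h\, m_l(A) + O(h^2)$, and since $m(\cdot)$ is $1$-Lipschitz in operator norm (directly from $m(A) = \inf_{\|v\|=1}\|Av\|$), the $O(h^2)$ perturbation of the actual diagonal block of $D\Phi_h$ is absorbed into the remainder. Thus $m(\partial \Phi_{\rm x}/\partial {\rm x}(h,z)) = 1 + h\, m_l(\partial F_{\rm x}/\partial {\rm x}(z)) + O(h^2)$ uniformly in $z$, and similarly $\|\partial \Phi_{\rm x}/\partial {\rm y}(h,z)\| = h\|\partial F_{\rm x}/\partial {\rm y}(z)\| + O(h^2)$. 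Taking $\inf$ and $\sup$ over $z\in D$ commutes with a uniform additive error, so $\xi_1(h,M) = 1 + h\,\overrightarrow{\xi_1(M)} + O(h^2)$. The remaining three expansions are handled identically, using $\|I+hA\| = 1+h\, l(A)+O(h^2)$ for the $\mu$-quantities.

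Parts 2--5 then reduce to elementary comparisons of Taylor coefficients. For Parts 4 and 5 the linear-in-$h$ correction has a strict sign, so it dominates the $O(h^2)$ error for all $h$ sufficiently small. Part 3 follows the same pattern applied to $\mu_1(h,M_1) - \xi_1(h,M_2) = h(\overrightarrow{\mu_1(M_1)} - \overrightarrow{\xi_1(M_2)}) + O(h^2)$. For Part 2, I would expand $\xi_1(h,M_2)^{j+1} = 1 + h(j+1)\overrightarrow{\xi_1(M_2)} + O(h^2)$ by the binomial theorem (the base is $1+O(h)$, so the expansion is valid without any sign hypothesis), and then compare $h$-coefficients against $\mu_2(h,M_1) = 1 + h\,\overrightarrow{\mu_2(M_1)} + O(h^2)$; the same argument handles the $\mu_1$--$\xi_2$ inequality in \eqref{rate-flow-map-stable1}.

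The main obstacle I expect is the bookkeeping of uniformity in $z$ for the $O(h^2)$ remainders, which is what legitimizes passing $\inf_z$ and $\sup_z$ through the error terms. This needs $F\in C^{k+1}$ together with compactness of $D$; once those are in hand, every inequality is a first-order Taylor comparison. A secondary point worth checking carefully is that $m(\cdot)$ remains stable under the $O(h^2)$ perturbation coming from the higher-order part of $\Phi_h$, which is exactly the $1$-Lipschitz property of $m$ noted above. No dynamical hypothesis beyond smoothness of $F$ enters, and the power $j+1$ in Part 2 appears purely through the binomial expansion of the base near $1$.
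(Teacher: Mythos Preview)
Your proposal is correct and follows essentially the same approach as the paper: the paper defers most of Part~1 and Parts~3--5 to Theorem~31 in \cite{CZ2016} (whose proof is precisely the Taylor expansion of $D\Phi_h$ combined with Lemma~\ref{lem-log}-3 that you outline), and for the new claim \eqref{rate-flow-map-stable1} it expands $\mu_1(h,M_1)^{j+1} = 1 + h(j+1)\overrightarrow{\mu_1(M_1)} + O(h^2)$ via the binomial theorem and compares first-order coefficients, exactly as you propose. Your write-up is in fact more explicit than the paper's about the uniformity-in-$z$ bookkeeping and the Lipschitz stability of $m(\cdot)$, both of which are genuine ingredients that the paper leaves implicit in its citation.
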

\begin{proof}
All statements except (\ref{rate-flow-map-xi2}) and (\ref{rate-flow-map-stable1}) are exactly Theorem 31 in \cite{CZ2016}.
\par
The expansion (\ref{rate-flow-map-xi2}) can be proved by the same arguments in Statement 1 of Theorem 31 in \cite{CZ2016}.
We shall prove (\ref{rate-flow-map-stable1}).
Since
\begin{equation*}
\mu_1(h,M_1)^{j+1} = \left(1 + h \overrightarrow{\mu_1(M_1)} + O(h^2)\right)^{j+1} = 1+h(j+1)\overrightarrow{\mu_1(M_1)} + O(h^2),
\end{equation*}
from the assumption $(j+1)\overrightarrow{\mu_1}(M_1) < \overrightarrow{\xi_2}(M_2)$, we have
\begin{align*}
\mu_1(h,M_1)^{j+1} &= 1+h(j+1)\overrightarrow{\mu_1(M_1)} + O(h^2)\\
	&< 1 + h\overrightarrow{\xi_2(M_2)} + O(h^2) = \xi_2(h,M_2) + O(h^2),
\end{align*}
and the claim holds true for all sufficiently small $h > 0$.
\end{proof}
The proposition indicates that  rate conditions for flows yield those for time-$h$ maps with sufficiently small $h > 0$.
Combining the correspondence of invariant manifolds between for flows and for time-$h$ maps stated in Proposition \ref{prop-manifold-flow-map} below, all arguments for invariant manifolds for flows are reduced to the case for maps stated in \cite{CZ2015}.

\subsection{Summaries for normally hyperbolic invariant manifolds in \cite{CZ2015}}
\label{section-summary-CZ}
Here we gather central results about invariant manifold validations stated in \cite{CZ2015}.
\begin{dfn}[Center-(un)stable manifolds, \cite{CZ2015}]\rm
\label{dfn-cu-mfd-CZ}
Consider the map (\ref{map-param}).
Let $D\subset \mathbb{R}^{n_u + n_s + l}$ be a $ch$-set.
We define the {\em center-stable set} in $D$ as
\begin{equation*}
W^{cs} = \{z\in D\mid F^m(z)\in D\text{ for all }m\in \mathbb{N}\}.
\end{equation*}
Similarly, define the {\em center-unstable set} in $D$ as
\begin{equation*}
W^{cu} = \{z\in D\mid \text{ there is a full backward trajectory of }Z \text{ in }D\}.
\end{equation*}
Finally, define the maximal invariant set in $D$ as
\begin{equation*}
\Lambda^\ast = \{z\in D\mid \text{ there is a full trajectory of }Z \text{ in }D\}.
\end{equation*}
\end{dfn}

Next we state the following topological and geometric conditions, which is known as {\em covering relations} in e.g., \cite{ZG, ZCov}.

\begin{prop}
\label{prop-isol-cov}
Assume that $D = \overline{{\bf B}_{n_u}}\times \overline{{\bf B}_{n_s}}\times \overline{{\bf B}_l}$ is an isolating block for (\ref{flow-param}) such that
\begin{itemize}
\item $\partial {\bf B}_{n_u}\times \overline{{\bf B}_{n_s}}\times \overline{{\bf B}_l}$ is an exit;
\item $\overline{{\bf B}_{n_u}}\times \partial {\bf B}_{n_s}\times \overline{{\bf B}_l}$ is an entrance;
\item $\overline{{\bf B}_{n_u}}\times \overline{{\bf B}_{n_s}}\times \partial {\bf B}_l$ is either of an entrance or an exit,
\end{itemize}
and let $F_t(q) = \Phi(t,q)$.
If $t$ is sufficiently small, then $F_t$ satisfies \lq\lq covering condition"; namely,
\begin{enumerate}
\item There exists a continuous homotopy $h:[0,1]\times N_c\to \mathbb{R}^{n_u}\times \mathbb{R}^{n_s}$ satisfying
\begin{align*}
&h_0 = f_c,\\
&h([0,1],N_c^-)\cap M_c = \emptyset,\\
&h([0,1],N_c)\cap M_c^+ = \emptyset,
\end{align*}
where $h_\lambda = h(\lambda, \cdot)$ ($\lambda \in [0,1]$).
\item There exists a mapping $A:\mathbb{R}^{n_u}\to \mathbb{R}^{n_u}$ such that
\begin{equation}
\label{cov-degree}
\begin{cases}
h_1(p,q) = (A(p),0), &\\
A(\partial {\bf B}_{n_u} (0,1)) \subset \mathbb{R}^u \setminus \overline{{\bf B}_{n_u}}(0,1), & \\
\deg(A, \overline{{\bf B}_{n_u}}, 0)\not = 0
\end{cases}
\end{equation}
holds for $p\in \overline{{\bf B}_{n_u}}(0,1), q\in \overline{{\bf B}_{n_s}}(0,1)$.
\end{enumerate}
\end{prop}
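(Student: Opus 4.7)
The plan is to construct a linear homotopy between the projected time-$t$ map $f_c=\pi_{(a,b)}\circ F_t$ and a simple expanding model $(a,b,y)\mapsto (2a,0)$, and to verify the two covering conditions using the small-$t$ Taylor expansion $F_t=\id+tF+O(t^2)$ together with strict normal transversality of $F$ on the hyperbolic faces of $\partial D$.

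The first step is to upgrade the isolating-block hypothesis to a quantitative form. By smoothness of $F$ and compactness of each face, the exit condition on $\partial{\bf B}_{n_u}\times\overline{{\bf B}_{n_s}}\times\overline{{\bf B}_l}$ together with Definition \ref{dfn-isolation} yields a constant $c_u>0$ with $a\cdot F_a(a,b,y)\ge c_u$, and the entrance condition on $\overline{{\bf B}_{n_u}}\times\partial{\bf B}_{n_s}\times\overline{{\bf B}_l}$ gives $b\cdot F_b(a,b,y)\le -c_s<0$. The $y$-face plays no direct role in the covering condition because the target of the homotopy, $\mathbb{R}^{n_u}\times\mathbb{R}^{n_s}$, already projects out the center coordinate.

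Next, setting $A(a):=2a$, I define
\begin{equation*}
h(\lambda,a,b,y):=\bigl((1-\lambda)\,\pi_a F_t(a,b,y)+2\lambda a,\ (1-\lambda)\,\pi_b F_t(a,b,y)\bigr).
\end{equation*}
Then $h_0=f_c$ by construction, and $h_1(a,b,y)=(A(a),0)$; since $A(\partial{\bf B}_{n_u})=\{|a|=2\}\subset\mathbb{R}^{n_u}\setminus\overline{{\bf B}_{n_u}}$ and $\deg(A,\overline{{\bf B}_{n_u}},0)=\sgn(\det 2I)=1\ne 0$, condition 2 of (\ref{cov-degree}) is immediate. For the nonintersection conditions, expand $\pi_a F_t(a,b,y)=a+tF_a(a,b,y)+O(t^2)$. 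On $|a|=1$ and $\lambda\in[0,1]$,
\begin{equation*}
a\cdot\pi_a h_\lambda = (1+\lambda)+(1-\lambda)t\,(a\cdot F_a)+O(t^2)>1
\end{equation*}
uniformly for sufficiently small $t$, so Cauchy–Schwarz gives $|\pi_a h_\lambda|>1$ and $h([0,1],N_c^-)\cap M_c=\emptyset$. For the stable component $\pi_b h_\lambda=(1-\lambda)\pi_b F_t$: smallness of $t$ trivially handles $|b|<1$, while on $|b|=1$ the entrance estimate gives $|\pi_b F_t|^2=1+2t(b\cdot F_b)+O(t^2)<1$, so $(1-\lambda)|\pi_b F_t|<1$ and $h([0,1],N_c)\cap M_c^+=\emptyset$.

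The main obstacle is the quantitative upgrade in the first step: passing from the \emph{integral} exit/entrance formulation of Definition \ref{dfn-isolation} to \emph{strict differential} transversality $a\cdot F_a\ge c_u$, $b\cdot F_b\le -c_s$. In principle Definition \ref{dfn-isolation} tolerates boundary tangency with higher-order outward curvature, for which the first-order estimate above would break down. However, for the fast-saddle-type blocks built in Section \ref{section-block} the strict inequalities $a_j'>0$ at $a_j=a_j^\pm$ and $b_j'<0$ at $b_j=b_j^\pm$ hold pointwise by construction, so a compactness argument immediately yields uniform $c_u,c_s>0$. The remaining steps then reduce to the $O(t)$ expansion of $F_t$ and elementary linear algebra, with the threshold $t_0$ determined by $c_u, c_s$ and uniform bounds on second derivatives of $F$ on $D$.
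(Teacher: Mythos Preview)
Your argument is correct and reaches the same conclusion, but by a different route than the paper. The paper homotopes at the level of \emph{vector fields}: it interpolates $H_\alpha=(1-2\alpha)F+2\alpha C$ between $F$ and the model saddle $C:(a,b)\mapsto(a,-b)$, checks that each $H_\alpha$ keeps the same exit/entrance structure on $\partial D$, and then uses the time-$t$ flows $\phi_\alpha(t,\cdot)$ as the first half of the map homotopy; the second half linearly crushes the stable coordinate of $\phi_{1/2}(t,\cdot)=(e^ta,e^{-t}b)$ to zero. You instead homotope at the level of \emph{maps}, interpolating $\pi_{(a,b)}F_t$ linearly with $(a,b)\mapsto(2a,0)$ and controlling everything through the first-order expansion $F_t=\id+tF+O(t^2)$. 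Both arguments ultimately rest on the same strict transversality $a\cdot F_a>0$, $b\cdot F_b<0$ on the hyperbolic faces, which, as you note, is exactly what the construction in Section~\ref{section-block} delivers. Your route is slightly more elementary since it avoids analyzing a one-parameter family of flows; the paper's route is a bit cleaner on the stable side because flow invariance of $\{|b|\leq 1\}$ under each $H_\alpha$ immediately gives $|\pi_b\phi_\alpha(t,q)|<1$ without a separate near-boundary argument.

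One small point: the phrase ``smallness of $t$ trivially handles $|b|<1$'' is too quick. For $|b|$ just below $1$ the bound $|\pi_bF_t|\le|b|+t\|F_b\|+O(t^2)$ alone can exceed $1$. You need to split into a collar $1-\delta\le|b|\le1$, where the strict entrance estimate $b\cdot F_b\le -c_s/2$ persists by continuity and gives $|\pi_bF_t|^2\le|b|^2-tc_s+O(t^2)<1$, and an interior region $|b|\le1-\delta$, where $|\pi_bF_t|\le 1-\delta+Ct<1$ for $t<\delta/(2C)$. This is routine but should be said.
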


\begin{proof}
Let $C : (x,y)\to (x,-y)$ and for $\alpha \in [0,1/2]$, let
\begin{equation*}
H_\alpha = (1-2\alpha)f + 2\alpha C.
\end{equation*}
For any $q\in \partial {\bf B}_{n_u} \times \overline{{\bf B}_{n_s}}$,
\begin{equation*}
(\pi_x H_\alpha(q) \mid \pi_x q) = (1-2\alpha)(\pi_x f(q) \mid \pi_x q) + 2\alpha(\pi_x q \mid \pi_x q) > 0,
\end{equation*}
and for any $q\in \overline{{\bf B}_{n_u}} \times \partial {\bf B}_{n_s}$,
\begin{equation*}
(\pi_y H_\alpha(q) \mid \pi_y q) = (1-2\alpha)(\pi_y f(q) \mid \pi_y q) - 2\alpha(\pi_y q \mid \pi_y q) < 0.
\end{equation*}
Let $\phi_\alpha(t,q)$ be the flow generated by $q' = H_\alpha(q)$.
Note that
\begin{equation*}
\phi_{1/2}(t,(x,y)) = (e^t x, e^{-t}y).
\end{equation*}
Fix a time $t$ being sufficiently small and define
\begin{equation*}
h_\alpha(x,y) = \begin{cases}
\phi_\alpha(t,q) & \alpha\in [0,1/2)\\
(e^t x, (2-2\alpha)e^{-t}y) & \alpha\in [1/2,1]
\end{cases}.
\end{equation*}
All conditions of covering condition follows from the definition of $h_\alpha$ and the isolation.
See \cite{CZ2015} for example.
\end{proof}

\begin{dfn}[Definition 13 in \cite{CZ2015}]\rm
\label{dfn-back-cone}
We say that the map $F$ satisfies {\em backward cone conditions} if the following condition holds:
If $Z_1, Z_2, F(Z_1), F(Z_2)\in D$ and $F(Z_1)\in C^s_{M}(F(Z_2))$, then we have $Z_1\in C^s_{M}(Z_2)$, where $C^s_{M}(Z_0)$ is the stable cone with the vertex $Z_0 = (a_0,b_0,y_0)$ defined by
\begin{equation*}
C^s_{M}(Z_0) = \left\{ (a,b,y)\in \mathbb{R}^{n_u+n_s+l} \mid \|b-b_0\|^2 \geq M^2 (\|a-a_0\|^2 + \|y-y_0\|^2)\right\}\footnote
{
The corresponding definition in \cite{CZ2015} is $C^s_{1/L}(Z_0) \equiv J_s(Z_0; L)= \left\{ L^2\|b-b_0\|^2 \geq (\|a-a_0\|^2 + \|y-y_0\|^2)\right\}$, in which case the constant $L$ is assumed to be $L<1$.
The constant $L$ is known as the {\em slope} of cone.
We choose our current definition of cones following arguments in \cite{Mat2}.
}.
\end{equation*}
\end{dfn}

The main result for the existence of smooth invariant manifolds is the following, which is stated in \cite{CZ2015} replacing $\overline{{\bf B}_l}$ by  an $l$-dimensional torus $\Lambda = (\mathbb{R}/\mathbb{Z})^l$ with slight modifications of all concepts stated in Section \ref{section-rate} and here.
\begin{prop}[Theorem 16 in \cite{CZ2015}]
Let $k\geq 1$, $R < \frac{1}{2}R_\Lambda$\footnote{
$R_\Lambda$ is a positive number associated with $\Lambda$. See Remark \ref{rem-CZ} for details.
}, and $f:D = \overline{{\bf B}_{n_u}(R)}\times \overline{{\bf B}_{n_s}(R)}\times \Lambda \to \mathbb{R}^{n_u}\times \mathbb{R}^{n_s}\times \Lambda$ be a $C^{k+1}$ map, where $\Lambda$ is an $l$-dimensional torus.
If $f$ satisfies rate conditions of order $k$ with covering conditions and backward cone conditions with $M$ satisfying $1/M \in (2R/R_\Lambda, 1)$, then $W^{cs}, W^{cu}$ and $\Lambda^\ast\equiv W^{cs}\cap W^{cu}$ are $C^k$ manifolds in $D$, which are the graphs of $C^k$ functions
\begin{equation*}
w^{cs}: \overline{{\bf B}_{n_s}(R)}\times \Lambda \to \overline{{\bf B}_{n_u}(R)},\quad w^{cu}: \overline{{\bf B}_{n_u}(R)}\times \Lambda \to \overline{{\bf B}_{n_s}(R)},\quad w^{c}:\Lambda \to \overline{{\bf B}_{n_u}(R)}\times \overline{{\bf B}_{n_s}(R)},
\end{equation*}
respectively. That is,
\begin{align*}
W^{cu} &=\{(\lambda, w^{cs}(y,\lambda),y)\mid \lambda\in \Lambda, y\in \overline{{\bf B}_{n_s}(R)}\},\\
W^{cs} &=\{(\lambda, x,w^{cu}(x,\lambda))\mid \lambda\in \Lambda, x\in \overline{{\bf B}_{n_u}(R)}\},\\
\Lambda^\ast &=\{(\lambda, w^c(\lambda))\mid \lambda\in \Lambda\}.
\end{align*}
Moreover, $f|_{W^{cu}}$ is an injection, $w^{cs}$ and $w^{cu}$ are Lipschitz with constants $1/M$, and $w^c$ is Lipschitz with the constant $\sqrt{2}/\sqrt{M^2-1}$.
The manifolds $W^{cs}$ and $W^{cu}$ intersect transversally, and $W^{cs}\cap W^{cu}=\Lambda^\ast$.
See Remark \ref{rem-CZ} for treatments of $\Lambda$.
\end{prop}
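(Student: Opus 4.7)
The plan is to follow the standard graph-transform construction of center-(un)stable manifolds adapted to the compact torus center-factor. First I set up a Banach space $\mathcal{G}^{cs}$ of Lipschitz sections $w:\overline{{\bf B}_{n_s}(R)}\times \Lambda \to \overline{{\bf B}_{n_u}(R)}$ with Lipschitz constant at most $1/M$, equipped with the sup norm. The covering condition supplied by Proposition \ref{prop-isol-cov} for $f$ together with the first-order rate inequality $\mu_{s,1} < 1 < \xi_{u,1}$ from Definition \ref{dfn-rate-map} allow one to define a graph transform $\mathcal{T}:\mathcal{G}^{cs}\to \mathcal{G}^{cs}$: for each $(b,y)$ and candidate $w$, solve implicitly for the unique $a$ such that $f(a,b,y)$ lies on $\mathrm{graph}(w)$ over some $(b',y')$, and declare $(\mathcal{T}w)(b,y)=a$. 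The constraint $R<R_\Lambda/2$ combined with $1/M>2R/R_\Lambda$ guarantees that cones of slope $1/M$ based at points of $D$ project injectively into $\Lambda$, ruling out wrap-around obstructions and making the implicit definition well-posed.

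Next I would establish existence, uniqueness and Lipschitz regularity of $W^{cs}$ and, symmetrically, $W^{cu}$. Invariance of $\mathcal{G}^{cs}$ under $\mathcal{T}$ plus contractivity driven by $\mu_{s,1}<1$ yield a unique fixed point $w^{cs}$ whose graph is exactly $W^{cs}$; injectivity of $f|_{W^{cu}}$ follows from the expansion estimate $m(\partial G_a/\partial a)\geq \xi_{u,1}>1$. The backward cone condition of Definition \ref{dfn-back-cone} gives directly that $w^{cs}$ and $w^{cu}$ have Lipschitz constant $1/M$. Since $W^{cs}$ and $W^{cu}$ are graphs over complementary coordinates, each with slope bounded by $1/M<1$, their intersection is the Lipschitz graph $w^c:\Lambda \to \overline{{\bf B}_{n_u}(R)}\times \overline{{\bf B}_{n_s}(R)}$, and a direct cone-angle computation produces the Lipschitz constant $\sqrt{2}/\sqrt{M^2-1}$ for $w^c$.

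The $C^k$ regularity I would prove inductively via the invariant-jet approach. Lift $f$ to a bundle map on the $j$-jet space $J^j(D)$ for $j=1,\ldots,k$, and equip its fiber with the natural cone field induced by graphs of candidate $j$-th derivatives. The sharper inequalities of Definition \ref{dfn-rate-map}, namely $(\mu_{cs,1})^{j+1}<\xi_{u,2}$, $\mu_{s,2}<(\xi_{cu,1})^{j+1}$, $\mu_{s,1}<\xi_{cu,2}$, and $\mu_{cs,2}<\xi_{u,1}$, are precisely what is needed so that the lifted dynamics on $J^j$ over $W^{cs}$ continues to satisfy a rate condition of order zero: the derivative cocycle contracts faster than the $j$-fold product of base contractions. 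Applying the already-established Lipschitz theory to this lifted map produces a unique invariant Lipschitz section in the $j$-jets which, by uniqueness of the fixed point, must coincide with the $j$-th derivative of $w^{cs}$; continuity of that section then yields $w^{cs}\in C^k$, and symmetrically for $w^{cu}$ and $w^c$.

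Transversality $W^{cs}\pitchfork W^{cu}=\Lambda^\ast$ follows because, at any intersection point, the tangent spaces of the two graphs have complementary slopes bounded by $1/M<1$ and hence together span $\mathbb{R}^{n_u+n_s+l}$. The main obstacle I expect is the jet-bundle step: one must translate each of the rate conditions (\ref{rate-map-1})--(\ref{rate-map-4}) into the correct hyperbolic estimates on $J^j(D)$ with its induced operator norm, verify that the associated graph transform preserves the Lipschitz-section space, and check that trajectories cannot escape through the hyperbolic boundary $(\partial {\bf B}_{n_u}(R) \cup \partial {\bf B}_{n_s}(R))\times \Lambda$ during this construction; the last point is exactly what the covering condition rules out.
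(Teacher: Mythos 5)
First, note that the paper itself offers no proof of this proposition: it is imported verbatim as Theorem 16 of \cite{CZ2015}, so your argument can only be measured against the proof in that reference. Your plan is essentially the classical Hirsch--Pugh--Shub route: a graph transform on a Banach space of Lipschitz sections for existence and Lipschitz regularity, followed by a fiber-contraction argument on jet bundles for $C^k$ smoothness. That is a genuinely different route from \cite{CZ2015}, whose proof is deliberately graph-transform-free: existence and uniqueness of $W^{cs}$, $W^{cu}$ and $\Lambda^\ast$ are obtained from covering relations and (backward) cone conditions by topological arguments (the degree condition in the covering relation plus cone separation of orbits, which also handles the fact that $f$ is neither invertible nor globally defined and that orbits may leave $D$ through the exit), and the $C^k$ regularity is proved by direct estimates on higher-order difference quotients along the manifolds using exactly the inequalities $(\mu_{cs,1})^{j+1}<\xi_{u,2}$, $\mu_{s,2}<(\xi_{cu,1})^{j+1}$, (\ref{rate-map-4}), with no jet lift. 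What the reference's approach buys is hypotheses and Lipschitz bounds that are explicitly checkable in interval arithmetic (the constants $1/M$ and $\sqrt{2}/\sqrt{M^2-1}$ fall out of the cone geometry), which is precisely why the present paper can use the theorem for rigorous numerics; what your approach buys is a familiar framework and a clean induction on $k$, but the step you yourself flag --- translating (\ref{rate-map-1})--(\ref{rate-map-4}) into hyperbolicity of the lifted jet dynamics and invariance of the Lipschitz section class over a non-invertible map on a set with boundary --- is the real content and is not automatic, since those inequalities were tailored to the direct-estimate proof. Two attributions should also be corrected: in the backward graph transform for $W^{cs}$ the sup-norm contraction comes from the expansion constants ($\xi_{u,1}>1$, roughly a factor $1/\xi_{u,1}$), with $\mu_{s,1}<1$ and the cone condition serving to keep the slope class invariant rather than to drive the contraction; and injectivity of $f|_{W^{cu}}$ rests on $\xi_{cu,1}>0$ (expansion of the $(a,y)$-projection on horizontal graphs), not on $m(\partial G_a/\partial a)\geq \xi_{u,1}$ alone, because two points of $W^{cu}$ may differ only in the center direction.
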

In the above result, we omitted statements about {\em invariant foliations} of manifolds because they are out of our focus in present arguments.

\begin{rem}\rm
Briefly speaking, inequalities (\ref{rate-map-1}) and (\ref{rate-map-2}) describe the invariance of cones, which yields the existence of $W^{cs}$, $W^{cu}$ and their invariant foliations.
The additional inequalities (\ref{rate-map-3}) and (\ref{rate-map-4}) show the $C^k$-smoothness of validated manifolds.
\end{rem}

\begin{rem}\rm
In \cite{CZ2016}, only the rate condition for center-unstable manifolds is considered.
In the above definition we also state the rate condition for center-stable manifolds: namely, $(j+1)\overrightarrow{\mu_{cs,1}} < \overrightarrow{\xi_{u,2}}$ and $\overrightarrow{\mu_{s,1}} < \overrightarrow{\xi_{cu,2}}$.
\end{rem}

The key point of the existence and smoothness of invariant manifolds for flows is to reduce the problem into those for time-$t$ maps for sufficiently small $t>0$.
The reduction is realized by the following.

Firstly, the following proposition shows that the coincidence of center-(un)stable manifolds for flows and those for time-$t$ maps.

\begin{prop}[Correspondence of center-(un)stable manifolds. cf. Proof of Theorem 30 in \cite{CZ2016}]
\label{prop-manifold-flow-map}
Let $\Phi$ be a flow on $\mathbb{R}^{n_u+n_s+l}$.
For $h>0$, let $\Phi_h = \Phi(h,\cdot)$.
The sets $W^{cu}(\Phi)$ and $W^{cu}(\Phi_h)$ denote the center-unstable manifolds for the flow $\Phi$ and the map $\Phi_h$, respectively.
Similarly, the sets $W^{cs}(\Phi)$ and $W^{cs}(\Phi_h)$ denote the center-stable manifolds for the flow $\Phi$ and the map $\Phi_h$, respectively.
\par
Let $N^u = \overline{{\bf B}_{n_u}}\times \overline{{\bf B}_{n_s}} \times \overline{{\bf B}_l}$ be an isolating block for $\Phi$ with the entrance $N^{u,+} = \overline{{\bf B}_{n_u}}\times \partial {\bf B}_{n_s} \times \overline{{\bf B}_l}$ and the exit $N^{u,-} = \partial {\bf B}_{n_u}\times \overline{{\bf B}_{n_s}} \times \partial {\bf B}_l$.
Then there is a positive number $h_0 > 0$ such that $W^{cu}(\Phi)\cap N^u = W^{cu}(\Phi_h)\cap N^u$ holds for all $h\in (0,h_0)$.
\par
Similarly, let $N^s = \overline{{\bf B}_{n_u}}\times \overline{{\bf B}_{n_s}} \times \overline{{\bf B}_l}$ be an isolating block for $\Phi$ with the entrance $N^{s,+} = \overline{{\bf B}_{n_u}}\times \partial {\bf B}_{n_s} \times \partial {\bf B}_l$ and the exit $N^{s,-} = \partial {\bf B}_{n_u}\times \overline{{\bf B}_{n_s}} \times \overline{{\bf B}_l}$.
Then there is a positive number $h_0 > 0$ such that $W^{cs}(\Phi)\cap N^u = W^{cs}(\Phi_h)\cap N^u$ holds for all $h\in (0,h_0)$.
\end{prop}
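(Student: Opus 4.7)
The plan is to prove the two equalities by establishing both inclusions for $W^{cu}$; the $W^{cs}$ statement then follows by running the same argument for the reverse-time flow (equivalently, for $-F$) on $N^s$, with the roles of $N^{s,+}$ and $N^{s,-}$ swapped, so no independent work is required.

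The forward inclusion $W^{cu}(\Phi)\cap N^u \subseteq W^{cu}(\Phi_h)\cap N^u$ holds for every $h>0$: if $\Phi(-t,z)\in N^u$ for all $t\geq 0$, then sampling at $t=nh$, $n\in\mathbb{N}$, yields a full backward $\Phi_h$-orbit of $z$ contained in $N^u$. For the reverse inclusion, take $z\in W^{cu}(\Phi_h)\cap N^u$, so $\Phi(-nh,z)\in N^u$ for every $n$. Suppose for contradiction that $\Phi(-t^\ast,z)\notin N^u$ for some $t^\ast\in(nh,(n+1)h)$. Since both endpoints $\Phi(-nh,z)$ and $\Phi(-(n+1)h,z)$ belong to $N^u$, by continuity one locates $s_1<s_2$ in $[nh,(n+1)h]$ with $\Phi(-s_1,z),\Phi(-s_2,z)\in\partial N^u$ and $\Phi(-s,z)\notin N^u$ on $(s_1,s_2)$. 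By Definition~\ref{dfn-isolation}, $p:=\Phi(-s_1,z)$ is a forward-entrance point, hence $p\in N^{u,+}$ (the $b$-boundary), while $q:=\Phi(-s_2,z)$ is a forward-exit point, hence $q\in N^{u,-}$. Thus some $q\in N^{u,-}$ flows forward into $N^{u,+}$ in time $s_2-s_1<h$, with the intermediate trajectory lying outside $N^u$.

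Ruling out this short excursion for $h$ sufficiently small is the core of the proof. The tool I would use is the extended block $\hat D_c$ from (\ref{fast-block-2}): on a collar of positive thickness outside $N^{u,-}$, the sign conditions $a_j'>0$ on $[a_j^+,\hat a_j^+]$ and $a_j'<0$ on $[\hat a_j^-,a_j^-]$ (and the analogous estimates in the $y$-direction) give a strictly outward component of the vector field with magnitude bounded below uniformly on the relevant compact set. Together with continuity of $\Phi$ and compactness of $\overline{N^{u,-}}$, this yields an $h_0>0$ such that for every $q\in N^{u,-}$ and every $t\in(0,h_0]$, the point $\Phi(t,q)$ stays outside $N^u$. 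Choosing $h<h_0$ contradicts the existence of the excursion and completes the inclusion $W^{cu}(\Phi_h)\cap N^u\subseteq W^{cu}(\Phi)\cap N^u$.

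The main obstacle is precisely this uniform-escape estimate. A priori, trajectories passing near the ``equator'' $\overline{N^{u,+}}\cap\overline{N^{u,-}}$ could exit and re-enter $N^u$ arbitrarily quickly, forcing $h_0\to 0$ and defeating the argument. It is the strict transversality supplied by the thickened block construction of Section~\ref{section-block}, rather than merely the isolating block definition itself, that guarantees a uniform $h_0$ and closes the gap between the discrete and continuous dynamics.
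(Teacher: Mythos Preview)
Your proposal is correct and follows essentially the same contradiction strategy as the paper, but the paper's execution is a bit leaner in two respects.

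First, the paper proves the $W^{cs}$ case directly (and cites \cite{CZ2016} for $W^{cu}$), and in doing so only tracks a \emph{single} boundary crossing: if $z\in W^{cs}(\Phi_h)$ but $\Phi(t,z)\notin N$ for some $t\in(mh,(m+1)h)$, then since trajectories can leave $N$ only through the exit, there is $\tau^\ast\in[mh,t)$ with $z^\ast:=\Phi(\tau^\ast,z)\in N^-$; then $\Phi\bigl((m+1)h-\tau^\ast,z^\ast\bigr)=\Phi((m+1)h,z)\in N$ with $(m+1)h-\tau^\ast\in(0,h)$, contradicting the uniform escape bound. Your version locates both an exit crossing $q$ and an entrance crossing $p$ and argues about the excursion between them; this works, but identifying $p$ as an entrance point is not needed---what matters is only that $\Phi(s_2-s_1,q)\in N^u$ with $s_2-s_1<h$.

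Second, for the uniform escape bound itself, the paper simply asserts that compactness of $N^-$ (which is closed in $\partial N$ by Definition~\ref{dfn-isolation}) yields a single $\delta>0$ with $\Phi(s,z)\notin N$ for all $z\in N^-$ and $s\in(0,\delta]$, and takes $h_0=\delta$. You instead invoke the thickened block $\hat D_c$ of (\ref{fast-block-2}) to produce a collar with strict outward drift. Your caution about the ``equator'' is reasonable, but in the setting of the proposition the block is a product of closed balls with the flow strictly transversal to each face by construction, so the paper's compactness argument already delivers the uniform $\delta$ without the extra machinery.
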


\begin{proof}
The first assertion is discussed in the proof of Theorem 30 in \cite{CZ2016}.
Although the proof of the second assertion is basically the same as the first, we state the proof of the second assertion for readers who are not familiar with arguments in this direction.
\par
The inclusion $W^{cs}(\Phi_h)\supset W^{cs}(\Phi)$ is obvious for any $h > 0$.
We then prove that, for suitable small $h > 0$, $W^{cs}(\Phi_h)\subset W^{cs}(\Phi)$. 
We shall rewrite $N^s\equiv N$ for simplicity.
Since $N$ is an isolating block, then the exit $N^-$ is compact and hence there is a $\delta > 0$ such that
\begin{equation}
\label{exit}
\Phi(s,z)\not \in N\quad \text{ for all }s\in (0,\delta] \text{ and } z\in N^-.
\end{equation}
Choose $h < \delta$.
Such a choice will prove our claim; namely, for any $z\in W^{cs}(\Phi_h)$ we can prove $\Phi(t,z)\in N$ for any $t > 0$.
\par
Assume that $z\in W^{cs}(\Phi_h)$. 
Then, for any $m\in \mathbb{N}$, we have
\begin{equation}
\label{inclusion-cs}
\Phi_h^m(z) = \Phi(mh, z)\in N.
\end{equation}
Assume further that for some $t > 0$, $\Phi(t,z)\not \in N$. 
By (\ref{inclusion-cs}), we have $mh < t <(m+1)h$ for some $n\in \mathbb{N}$.
Since $N$ is an isolating block, the only possibility to leave $N$ is that the trajectory cross the exit $N^-$.
We thus know that, for some $\tau^\ast \in (mh, t)$, $z^\ast\equiv \Phi(\tau^\ast,z)\in N^-$.
We then see that
\begin{equation*}
\Phi((m+1)h -\tau^\ast, z^\ast) = \Phi((m+1)h -\tau^\ast, \Phi(\tau^\ast,z)) = \Phi((m+1)h,z)\in N
\end{equation*}
from the assumption $z\in W^{cs}(\Phi_h)$. 
But it contradicts (\ref{exit}) by taking $s = (m+1)h -\tau^\ast \in (0,\delta)$.
We thus have, $\Phi(t,z)\not \in N$ for any $t > 0$, which indicates $z\in W^{cs}(\Phi)$ and hence $W^{cs}(\Phi_h)\subset W^{cs}(\Phi)$.
\end{proof}

Secondly, the covering condition for time-$t$ maps with sufficiently small $t > 0$ is derived from isolating blocks, as stated in Proposition \ref{prop-isol-cov}.
Finally, we can prove that backward cone conditions for time-$t$ maps (Definition \ref{dfn-back-cone}) can be automatically constructed by the rate condition of order $0$ for flows.
We see this consequence in the next section.
\par
As a consequence, Propositions \ref{prop-rate-flow-map} and \ref{prop-manifold-flow-map} as well as the above observations reduce problems concerning with center-(un)stable manifolds for flows to those for maps.
Therefore, the rate condition in Definition \ref{dfn-rate-flow} gives the existence as well as their smoothness of invariant manifolds for flows.

\begin{rem}\rm
\label{rem-CZ}
We gather several comments about discussions in \cite{CZ2015, CZ2016} and our present focus.
In \cite{CZ2015}, the center variable is assumed to belong to an $l$-dimensional closed manifold $\Lambda$ such as a torus.
In this case, we have to care about a good chart in terms of, say, a covering map $\phi : \mathbb{R}^l \to \Lambda$.
Constants for rate conditions (Definition \ref{dfn-rate-map}) are then considered for both original $C^{k+1}$ map $f : N\to \mathbb{R}^{n_u}\times \mathbb{R}^{n_s}\times \Lambda$ and that defined on a set in the same good chart given by
\begin{equation*}
P(q) = \{z\in N\mid \|\pi_y z- \pi_y q\|\leq R_\Lambda / 2 \},
\end{equation*}
where $R_\Lambda > 0$ is such that $\phi\mid_{{\bf B}_l(y,R_\Lambda)}$ is homeomorphic onto its image for each $y\in \Lambda$. 
A difference arises in the definition of constants $\xi_{u,i}$ and $\xi_{u,i,P}$, $i=1,2$, in \cite{CZ2015}.
Note that constants $\xi_{u,i}$ in the current definition corresponds to $\xi_{u,i,P}$ in \cite{CZ2015}\footnote
{
Our stated conditions gives stronger ones than \cite{CZ2015}, since $\xi_{u,i,P} \leq \xi_{u,i}$ holds in general.
}.
In our current setting, the set $\Lambda$ is assumed to be an $h$-set $Y\subset \mathbb{R}^l$, which leads to simpler treatments of charts.
\par
On the other hand, we have to care about treatments of isolating blocks and center-(un)stable manifolds of invariant manifolds for flows when we apply a series of arguments with $\Lambda = Y\subset \mathbb{R}^l$ being an $h$-set or compact manifold with boundary.
In \cite{CZ2016}, isolating blocks of the form $D = \overline{{\bf B}_{n_u}}\times \overline{{\bf B}_{n_s}}\times \Lambda$ with $\Lambda$ being a torus do not assume transversal intersections between $\overline{{\bf B}_{n_u}}\times \overline{{\bf B}_{n_s}}\times \partial \Lambda$ and flow, in which case there is no problem since $\partial \Lambda = \emptyset$.
If we apply the same arguments as \cite{CZ2015, CZ2016} with replacements of $\Lambda$ by an $h$-set $Y$, however, we need isolation arguments in center variables.
The same kind of problems appear in treatments of center manifolds, since center manifolds as graphs defined on an $h$-set $Y$ are neither positively nor negatively invariant in general (compare with Definition \ref{dfn-cu-mfd-CZ}).
We then modify the original vector field on an extended $h$-set so that the same arguments as \cite{CZ2015, CZ2016} can be applied to the modified vector field on the extended $h$-set. 
The key requirement is isolation of the extended $h$-set in the center (namely, slow) direction with respect to modified flow, which essentially concerns with the existence of center-(un)stable manifolds.
In fact, such a modification for fast-slow systems already appears in \cite{Jones, Mat2}.
In the next section, we state the concrete modification and complete arguments about smoothness of slow manifolds.
\end{rem}

\section{Validating the existence and smoothness of slow manifolds}
\label{section-inv-mfd}
Here we review a verification theorem of slow manifolds as well as their stable and unstable manifolds stated in \cite{Mat2}, which provides sufficient conditions to validate not only the critical manifold $S_0$ but also the perturbed slow manifold $S_\epsilon$ of (\ref{fast-slow})$_\epsilon$ {\em for all $\epsilon \in (0,\epsilon_0]$ in given regions}. 
We also add the smoothness arguments of $S_\epsilon$, which is an application of arguments in Section \ref{section-preliminary}.
\par
Recall that Fenichel's results, which are ones of the origin of geometric singular perturbation theory (Remark \ref{rem-Fenichel} below), assume normal hyperbolicity and graph representation of the critical manifold $S_0$ for (\ref{layer}).
These assumptions are nontrivial, but very essential to prove the persistence. 
Our verification theorem contains verification of both normal hyperbolicity and graph representation of $S_0$.

The main idea is based on discussions in \cite{Jones}. 
For technical reasons, we use a multiple of $\epsilon$ as the new auxiliary variable. 
We set $\epsilon = \eta \sigma$ and $\sigma:= \epsilon_0 > 0$, where $\epsilon_0$ is a given positive number. 
We add the equation $\eta' = 0$ to (\ref{fast-slow})$_\epsilon$. 
Furthermore, we consider the following system instead of (\ref{fast-slow})$_\epsilon$ for simplicity: 
\begin{equation}
\label{abstract-form}
\begin{cases}
a' = A(y)a + F_1(x,y,\epsilon) & \\
b' = B(y)b + F_2(x,y,\epsilon) & \\
y' = \epsilon g(x,y,\epsilon) & \\
\eta' = 0 &
\end{cases}.
\end{equation}
Here $A(y)$ denotes the $u\times u$ matrix which all eigenvalues have positive real part 
and $B(y)$ denotes the $s\times s$ matrix which all eigenvalues have negative real part\footnote
{
In \cite{Mat2}, matrices $A$ and $B$ are assumed to be (locally) constant.
For practical validation of slow manifolds in this section, we consider (\ref{abstract-form}) with locally constant matrices $A$ and $B$ around numerical equilibria of $\dot x = f(x,y,0)$.
In Section \ref{section-eigenpairs} we consider $y$-dependent matrices $A(y)$ and $B(y)$.
}.
This formulation is natural when the construction of fast-saddle-type blocks stated in Section \ref{section-preliminary} is taken into account.

Let $N$ be a fast-saddle type block for (\ref{fast-slow}). 
Section \ref{section-block} implies that the coordinate representation, $N_c$, is given by (\ref{fast-block}) (or (\ref{fast-block-2})), which is directly obtained from the system (\ref{abstract-form}). 
A fast-saddle-type block $N$ has the form (\ref{fast-block}), which has $a$-coordinate, $b$-coordinate and $y$-coordinate following (\ref{abstract-form}). 
With this in mind, we put  following notations.

\begin{notation}\rm
Let $\pi_a$, $\pi_b$, $\pi_y$, $\pi_{a,b}$, $\pi_{a,y}$ and $\pi_{b,y}$ be the projection onto the $a$-, $b$-, $y$-, $(a,b)$-, $(a,y)$- and $(b,y)$-coordinate in $N$, respectively. If no confusion arises, we drop the phrase \lq\lq in $N$" in their notations.

We identify nonlinear terms $F_1(x,y,\epsilon)$, $F_2(x,y,\epsilon)$ and $g(x,y,\epsilon)$ with $F_1(a,b,y,\epsilon)$, $F_2(a,b,y,\epsilon)$ and $g(a,b,y,\epsilon)$, respectively, via an affine transform $x(\in \mathbb{R}^n)\mapsto (a,b)\in \mathbb{R}^{n_u+n_s}$.

For a squared matrix $A(y)$ with ${\rm Spec}(A(y)) \subset \{\lambda \in \mathbb{C}\mid \re \lambda > 0\}$, $\lambda_A > 0$ denotes a positive number such that
\begin{equation}
\label{bound-unst-ev}
\lambda_A < \re \lambda,\quad \forall \lambda \in {\rm Spec}(A(y)).
\end{equation}
Similarly, for a squared matrix $B(y)$ with ${\rm Spec}(B(y)) \subset \{\lambda \in \mathbb{C}\mid \re \lambda < 0\}$, $\lambda_B < 0$ denotes a negative number such that
\begin{equation}
\label{bound-st-ev}
\lambda_B > \re \lambda,\quad \forall \lambda \in {\rm Spec}(B(y)).
\end{equation}

Finally, let ${\rm dist}(\cdot, \cdot)$ be the distance between compact sets $N_1, N_2\subset \mathbb{R}^{n+l}$ given by ${\rm dist}(N_1,N_2) = \inf_{z_1\in N_1, z_2\in N_2}|z_1-z_2|$.
\end{notation}

The basic concept for verifying the existence and smoothness of slow manifolds is {\em rate conditions} discussed by Fenichel \cite{F1973, F1977} for the modified vector field of (\ref{abstract-form}), which compares the expanding and decay rates of variational trajectories along invariant manifolds. 
The (generalized) {\em Lyapunov-type numbers} are considered there and these numbers estimate the smoothness of (normally hyperbolic) invariant manifolds as well as their invariant foliations.
Here we apply arguments based on \cite{CZ2015, CZ2016}, which are reviewed in Section \ref{section-preliminary} and are in the same spirit as Fenichel's arguments, to slow manifolds for fast-slow systems.
The approximate diagonal system (\ref{abstract-form}) relates to (\ref{flow-param}) in the following correspondence.
The variable $z$ in (\ref{flow-param}) corresponds to $Z = (z,\eta)^T \equiv ((a,b,y)^T,\eta)^T$, where $a$ is the (fast-)unstable variable, $b$ is the (fast-)stable variable and $y$ is the slow variable\footnote
{
In fast-slow systems, the multiple time scale parameter $\epsilon$ can be considered as a component of the center variable.
The center variable $y$ in Sections \ref{section-rate} and \ref{section-summary-CZ} therefore corresponds to the pair $(y,\epsilon)$, or $(y,\eta)$, of the slow variable and the multiple time scale parameter in this section.
\par
In (\ref{flow4fss-rate}), we do not consider other parameter dependence of systems explicitly.
In our setting, several components of slow variable $y$ can be considered as parameters.
In such a case, parameters $\tilde y_{i_1}=\epsilon, \tilde y_{i_2}, \cdots, \tilde y_{i_p}$ can be considered to evolve following the trivial vector field $\frac{d}{dt}\tilde y_{i_j} = 0$.
Whenever the system (\ref{fast-slow}) contains parameters, the above treatment enables us to apply arguments in Sections \ref{section-preliminary}, \ref{section-inv-mfd} and later.
}, in which case (\ref{abstract-form}) has the form $Z' = F(Z)$, where
\begin{equation}
\label{flow4fss-rate}
F(Z) = (F_a(Z), F_b(Z), F_y(Z), F_\eta(Z))^T,\quad 
\begin{pmatrix}
F_a(Z)\\
F_b(Z)\\
F_y(Z)\\
F_\eta(Z)
\end{pmatrix}
=
\begin{pmatrix}
A(y)a + F_1(a,b,y,\epsilon) \\
B(y)b + F_2(a,b,y,\epsilon) \\
\epsilon g(a,b,y,\epsilon)\\
0
\end{pmatrix}.
\end{equation}

\begin{dfn}[Rate condition for fast-slow systems]\rm
\label{dfn-rate-fast-slow}
Define the {\em (local) rates for fast-slow system (\ref{fast-slow})} as those in Definition \ref{dfn-rate-flow} for $F$ in (\ref{flow4fss-rate}) with the coordinate $(a,b,y)$.
We say that (\ref{fast-slow}) satisfies the {\em (local) rate condition of order $k$} (in a $ch$-set $D=N\times [0,\epsilon_0]\subset \mathbb{R}^{n_u+n_s+l+1}$) if $F$ satisfies the rate condition of order $k$ in terms of local rates in Definition \ref{dfn-rate-flow}.
\end{dfn}

\par

\begin{rem}\rm
As in \cite{CZ2015}, inequalities (\ref{rate-fss-1}) and (\ref{rate-fss-2}) show the existence of $W^s(S_\epsilon)$ and $W^u(S_\epsilon)$.
Inequalities (\ref{rate-fss-3}) describe $C^j$-smoothness of $W^s(S_\epsilon)$ and $W^u(S_\epsilon)$.
Inequalities (\ref{rate-fss-1}) and (\ref{rate-fss-4}) yield 
\begin{equation*}
(j+1)\overrightarrow{\mu_{s,1}} < \overrightarrow{\xi_{su,2}},\quad \overrightarrow{\mu_{ss,2}} < (j+1) \overrightarrow{\xi_{u,1}},
\end{equation*}
showing the existence of (local) invariant foliations and $C^j$-smoothness fibers of $W^s(S_\epsilon)$ and $W^u(S_\epsilon)$.
\end{rem}
Unless otherwise noted, the terminology \lq\lq rate condition" always means that in Definition \ref{dfn-rate-fast-slow} in the context of fast-slow systems.

\bigskip
We would like to apply a series of results in Section \ref{section-CZ} to $F$ in a fast-saddle-type block $N$ and $\epsilon \in [0,\epsilon_0]$.
However, we cannot directly apply these results to $N$ because $N$ is not actually an isolating block.
More precisely, the flow $\varphi_\epsilon$ does not always intersect the boundary in the slow direction, $N_c^{slow} = \overline{{\bf B}_{n_u}}\times \overline{{\bf B}_{n_s}}\times \partial {\bf B}_l$, transversely.
Nevertheless, we can prove the smoothness of $W^\alpha(S_\epsilon),\ (\alpha = u,s)$ under careful treatments of discussions in \cite{CZ2015}.
Before stating our result in this section, we put the following assumption, which covers direct applications with rigorous numerics.
\begin{ass}
The $y$-component of fast-saddle-type blocks, $Y_N\equiv \pi_y N$, is an interval set, namely, 
\begin{equation}
\label{Y_N}
Y_N = \prod_{i=1}^l [y_i^-, y_i^+],
\end{equation}
which obviously keeps the structure of $N$ as an $h$-set.
Let $m_N(y_i) := \frac{1}{2}(y_i^-, y_i^+)$ be the middle point of the interval $[y_i^-,y_i^+]$
\end{ass}

We then have the following result.
\begin{thm}
\label{thm-smooth-slow-mfd}
Consider (\ref{abstract-form}), where $F_1, F_2, g$ are $C^{k+1}$ for all variables including $\epsilon$.
Let $N = \overline{{\bf B}_{n_u}} \times \overline{{\bf B}_{n_s}} \times Y_N\subset \mathbb{R}^{n_u+n_s+l}$ be a fast-saddle-type block constructed in Section \ref{section-block} being of the form (\ref{Y_N}).
Assume that the vector field $F$ satisfies the rate condition of order $k$ in $N\times [0,\epsilon_0]$.
Then, for each $\epsilon\in [0,\epsilon_0]$, $N$ contains locally invariant manifolds $S_\epsilon$, $W^\alpha(S_\epsilon),\ \alpha = u,s$, which form
\begin{align*}
S_\epsilon &= W^u(S_\epsilon)\cap W^s(S_\epsilon)\cap N,\\
\{W^u(S_\epsilon)\}_{\epsilon\in [0,\epsilon_0]} &= \{(a,b,y,\epsilon)\in N\times [0,\epsilon_0] \mid b = h^u(a,y,\epsilon), a\in \overline{{\bf B}_{n_u}}, y\in Y_N\},\\
\{W^s(S_\epsilon)\}_{\epsilon\in [0,\epsilon_0]} &= \{(a,b,y,\epsilon)\in N\times [0,\epsilon_0] \mid a = h^s(b,y,\epsilon), a\in \overline{{\bf B}_{n_s}}, y\in Y_N\}.
\end{align*}
Moreover, all these manifolds are $C^k$.
That is, functions $h^u$ and $h^s$ determining $W^u$ and $W^s$ are $C^k$ functions.
In particular, for any $\epsilon \in [0,\epsilon_0]$, the validated slow manifold has a graph representation
\begin{equation*}
S_\epsilon \cap N = \{(x,y)\in Y\mid x = h_\epsilon(y),\ y\in Y_N\}
\end{equation*}
such that the graph $h_\epsilon$ is $C^k$ on $Y_N$.
\end{thm}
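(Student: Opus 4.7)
The plan is to reduce the claim to the invariant-manifold theorem for maps stated at the end of Section \ref{section-summary-CZ} (Theorem 16 of \cite{CZ2015}), applied to a time-$h$ map of the flow generated by (\ref{abstract-form}) with $h>0$ sufficiently small. Writing $Z=(a,b,y,\eta)$ and viewing $(y,\eta)$ as the center variable, (\ref{flow4fss-rate}) takes the form $Z' = F(Z)$. The passage from flows to maps is governed by Propositions \ref{prop-rate-flow-map} and \ref{prop-manifold-flow-map}, the covering condition by Proposition \ref{prop-isol-cov}, and the actual existence and $C^k$-smoothness by the cited theorem.

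The first step is to convert $N$ into a genuine isolating block. As Remark \ref{rem-CZ} flags, the flow need not be transverse to $\overline{{\bf B}_{n_u}}\times\overline{{\bf B}_{n_s}}\times\partial Y_N$. Following \cite{Jones, Mat2}, I would extend $Y_N$ to a slightly larger interval set $\widetilde Y_N \supset Y_N$ and replace $g$ by a smooth cutoff $\widetilde g$ that equals $g$ on $N$ and satisfies $\widetilde g \cdot \nu > 0$ on $\partial \widetilde Y_N$, making $\widetilde N = \overline{{\bf B}_{n_u}}\times\overline{{\bf B}_{n_s}}\times\widetilde Y_N$ an isolating block for the modified flow $\widetilde\Phi$, with slow boundary chosen as an exit. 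Because the rates in Definition \ref{dfn-rate-flow} are suprema over the block and the cutoff only perturbs $F_y$ outside $N$, the cutoff can be arranged so that the rate condition of order $k$ is preserved on $\widetilde N\times [0,\epsilon_0]$ (only $\overrightarrow{\mu_{ss,\cdot}}$ and $\overrightarrow{\xi_{su,\cdot}}$ pick up contributions from $\partial_b \widetilde g$, $\partial_{(a,y)}\widetilde g$ in the collar, and these can be made arbitrarily small).

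For $h>0$ sufficiently small, Proposition \ref{prop-rate-flow-map} transfers the rate condition of order $k$ for $\widetilde F$ to the time-$h$ map $\widetilde\Phi_h$; Proposition \ref{prop-isol-cov} yields the covering condition; and the backward cone conditions of Definition \ref{dfn-back-cone} follow from the inequalities $\overrightarrow{\mu_{s,1}} < \overrightarrow{\xi_{su,1}}$ together with $D\widetilde\Phi_h = I + hDF + O(h^2)$ (so $D\widetilde\Phi_h$ strictly contracts stable cones under back-iteration, as in Lemma \ref{lem-log}(3)). The cited theorem then produces $C^k$ center-stable and center-unstable manifolds of $\widetilde\Phi_h$ in $\widetilde N$, represented as Lipschitz-$1/M$ graphs $a = \widetilde h^s(b,y,\eta)$ and $b = \widetilde h^u(a,y,\eta)$. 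By Proposition \ref{prop-manifold-flow-map} these agree with the flow's center-(un)stable manifolds inside $\widetilde N$, and since $\widetilde F \equiv F$ on $N$, their restriction to $N$ gives $W^u(S_\epsilon)$ and $W^s(S_\epsilon)$ after slicing by $\eta = \epsilon/\sigma$, whose invariance is preserved because $\eta' = 0$. The slow manifold $S_\epsilon = W^u(S_\epsilon)\cap W^s(S_\epsilon)\cap N$ is then obtained by solving $a = h^s(b,y,\epsilon)$, $b = h^u(a,y,\epsilon)$ simultaneously; the Lipschitz constants $1/M < 1$ make this a contractive fixed-point problem with $C^k$ parameter dependence, so the Banach/Implicit Function Theorem yields a single $C^k$ function $x = h_\epsilon(y)$ on $Y_N$ with graph $S_\epsilon$.

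The technical heart of the argument is the modification in the second paragraph: the cutoff $\widetilde g$ must simultaneously (a) make $\widetilde N$ isolating with a transverse slow boundary, (b) preserve the full rate condition of order $k$ on $\widetilde N$, and (c) leave the dynamics on $N$ untouched so that the manifolds recovered by the CZ theorem truly describe (\ref{abstract-form}) inside the original block. Once that gluing is in place, the rest is bookkeeping combining Propositions \ref{prop-rate-flow-map}, \ref{prop-isol-cov}, \ref{prop-manifold-flow-map} with Theorem 16 of \cite{CZ2015}.
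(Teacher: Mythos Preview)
Your overall architecture matches the paper's: modify the slow equation on a collar $\widetilde N\setminus N$ to make $\widetilde N$ a genuine isolating block, check that the rate condition survives, then push everything through Propositions \ref{prop-rate-flow-map}, \ref{prop-isol-cov}, \ref{prop-manifold-flow-map} and the Capi\'nski--Zgliczy\'nski theorem. Two points, however, deserve correction.

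First, a single modification with the slow boundary as an \emph{exit} only yields $W^{cu}$; for $W^{cs}$ the slow boundary must be made an \emph{entrance} (compare the hypotheses on $N^u$ versus $N^s$ in Proposition \ref{prop-manifold-flow-map}). The paper does the inflowing modification for $W^s(S_\epsilon)$ and notes that $W^u(S_\epsilon)$ requires the mirror (outflowing) construction. This is minor but should be stated.

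Second, and more substantively, your claim that the collar contributions to the rates ``can be made arbitrarily small'' is not justified and is in tension with the construction: the bump in $y'$ must be $O(1)$ (not $O(\epsilon)$) to force transversality even at $\epsilon=0$, and its $y$-derivative scales like the reciprocal of the collar width, so shrinking the collar makes the perturbation to $\partial F_y/\partial y$ \emph{large}, not small. The paper avoids this entirely by a sign argument: the inflowing modification $y'\mapsto y'+\delta\rho(y)n_y$ contributes a diagonal block $\delta J_y\rho$ to the Jacobian that is \emph{negative semidefinite} by the monotonicity of $\rho$, and Lemma \ref{lem-monotome-log} then shows that $l(\partial F_{(b,y)}/\partial(b,y))$ can only decrease and $m_l(\partial F_{(a,y)}/\partial(a,y))$ can only decrease under this perturbation. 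Hence the rate inequalities relevant to $W^s$ (those in (\ref{rate-fss-1})--(\ref{rate-fss-4}) involving $\overrightarrow{\mu_{ss,1}}$ and $\overrightarrow{\xi_{su,2}}$) are preserved \emph{regardless of the size of $\delta$ or the steepness of $\rho$}. The outflowing case for $W^u$ is handled symmetrically, with the opposite sign helping the $\overrightarrow{\xi_{su,1}}$, $\overrightarrow{\mu_{ss,2}}$ inequalities. Replacing your smallness claim with this monotonicity argument closes the gap.
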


\begin{proof}
We only prove of the existence and smoothness for $W^s(S_\epsilon)$.
The existence and smoothness of $W^u(S_\epsilon)$ follows from similar arguments.
The smoothness of $S_\epsilon$ follows from the fact that $S_\epsilon$ is the intersection of $C^k$-manifolds $W^u(S_\epsilon)$ and $W^s(S_\epsilon)$, and that the intersection is transversal.
\par
\begin{description}
\item[Step 1.] Modified vector fields and isolation of blocks.
\end{description}

First of all, we slightly extend $N$ to $\tilde N$, where
\begin{equation*}
\tilde N = \overline{{\bf B}_{n_u}}\times \overline{{\bf B}_{n_s}}\times \prod_{i=1}^l [\tilde y_i^-, \tilde y_i^+]\equiv \overline{{\bf B}_{n_u}}\times \overline{{\bf B}_{n_s}}\times Y_{\tilde N},
\end{equation*}
and $(\tilde y_i^-, \tilde y_i^+) \equiv \Int [\tilde y_i^-, \tilde y_i^+] \supset [y_i^-, y_i^+]$ for all $i=1,\cdots, l$.
Taking $Y_{\tilde N}$ smaller if necessary, we may assume that $|\tilde y_i^+ - m_N(y_i)| = |\tilde y_i^- - m_N(y_i)|$ for all $i \in \{1,\cdots, n\}$.
\par
Next, we modify the vector field (\ref{abstract-form}) of the form
\begin{equation}
\label{abstract-form-modify}
\begin{cases}
a' = A(y)a + F_1(x,y,\epsilon) & \\
b' = B(y)b + F_2(x,y,\epsilon) & \\
y' = \epsilon g(x,y,\epsilon) + \delta \rho(y)n_y & \\
\eta' = 0 &
\end{cases}
\end{equation}
so that the vector field is inflowing invariant with respect to the slow-boundary $\tilde N^{slow} \equiv \overline{{\bf B}_{n_u}}\times \overline{{\bf B}_{n_s}}\times \partial Y_{\tilde N}$.
In other words, $\tilde N^{slow}$ should be a subset of the entrance of $\tilde N$ for (\ref{abstract-form-modify})
\footnote
{
In the case of $W^u(S_\epsilon)$, $\tilde N^{slow}$ should be a subset of the exit of $\tilde N$.
Namely, the vector field should be overflowing invariant with respect to $\tilde N^{slow}$.
}.
\par
More precisely, let $\rho_i : \mathbb{R}\to \mathbb{R}$ be a $C^\infty$ function satisfying
\begin{equation*}
\rho_i(y_i) = \begin{cases}
0 & \text{$y_i \leq y_i^+ - m_N(y_i)$}\\
1 & \text{$y_i \geq \tilde y_i^+ - m_N(y_i)$}
\end{cases},\quad \rho_i'(y_i) \geq 0\text{ for all }y_i\in \mathbb{R}.
\end{equation*}
We then construct a function $\tilde \rho_i : \mathbb{R}\to \mathbb{R}$ as follows:
\begin{equation*}
\tilde \rho_i (y_i) = 
\begin{cases}
\rho_i(y_i-m_N(y_i)), & \text{$y_i \geq m_N(y_i)$,} \\
\rho_i(-y_i+m_N(y_i)), & \text{$y_i \leq m_N(y_i)$.}
\end{cases}
\end{equation*}
Finally, set 
\begin{equation*}
\rho(y) := (-\sgn (y_1-m_N(y_1)) \tilde \rho_1 (y_1),\cdots, -\sgn (y_l-m_N(y_l)) \tilde \rho_l (y_l) )^T,\quad y\in \mathbb{R}^l.
\end{equation*}
Note that the function $\tilde \rho$ is $C^\infty$ on $\mathbb{R}$ and that $n_y = \pm e_i$ (the standard $i$-th unit vector) for $y$ with $y_i = \tilde y_i^\pm$, which indicate that the slow boundary $\tilde N^{slow}$ is indeed inflowing invariant for sufficiently large $\delta > 0$ and hence $\tilde N$ is an isolating block for the modified system (\ref{abstract-form-modify})\footnote{
In the case of $W^u(S_\epsilon)$, replace $\rho$ by $(\sgn (y_1-m_N(y_1))\tilde \rho_1 (y_1),\cdots, \sgn (y_l-m_N(y_l))\tilde \rho_l (y_l) )^T$.
}.

Arguments discussed in \cite{Mat2} (cf. \cite{Jones}) indicate that, if necessary choosing $\tilde N$ being sufficiently small 
so that $N\subset \tilde N$, the new compact set $\tilde N$ is an isolating block for the modified vector field (\ref{abstract-form-modify}) with the exit $\tilde N^-$ and the entrance $\tilde N^+$ given as follows\footnote{
In the case of $W^u(S_\epsilon)$, the exit $\tilde N^-$ is $\partial {\bf B}_{n_u} \times \overline{{\bf B}_{n_s}} \times Y_{\tilde N} \cup \overline{{\bf B}_{n_u}} \times \overline{{\bf B}_{n_s}}\times \partial Y_{\tilde N}$,  and the entrance $\tilde N^+$ is $\overline{{\bf B}_{n_u}} \times \partial {\bf B}_{n_s}\times Y_{\tilde N}$.
}:
\begin{align*}
\tilde N^- &=  \partial {\bf B}_{n_u} \times \overline{{\bf B}_{n_s}}\times Y_{\tilde N},\\
\tilde N^+ &= \overline{{\bf B}_{n_u}} \times \partial {\bf B}_{n_s}\times Y_{\tilde N} \cup \overline{{\bf B}_{n_u}} \times \overline{{\bf B}_{n_s}}\times \partial Y_{\tilde N}.
\end{align*}
In particular, the modified vector field (\ref{abstract-form-modify}) satisfies the covering condition in $\tilde N$ stated in Proposition \ref{prop-isol-cov}.

\begin{description}
\item[Step 2.] Rate conditions in $N\times [0,\epsilon_0]$ and $\tilde N\times [0,\epsilon_0]$.
\end{description}
Next, we discuss the relationship of local rates between on $N\times [0,\epsilon_0]$ and on $\tilde N\times [0,\epsilon_0]$.
The Jacobian matrix of the modified vector field (\ref{abstract-form-modify}) at $\tilde Z = (\tilde z, \tilde \eta) = ((\tilde a,\tilde b,\tilde y)^T,\tilde \eta)^T$ is 
\begin{equation*}
J(\tilde Z) = \begin{pmatrix}
A(y) + (\partial F_1 / \partial a)(Z) & (\partial F_1 / \partial b)(Z) & (\partial F_1 / \partial y)(Z) & (\partial F_1 / \partial \eta)(Z)\\
(\partial F_2 / \partial a)(Z) & B(y) + (\partial F_2/\partial b)(Z) & (\partial F_2 / \partial y)(Z)  & (\partial F_2 / \partial \eta)(Z) \\
\epsilon (\partial g / \partial a)(Z) & \epsilon (\partial g / \partial b)(Z) & \epsilon (\partial g / \partial y)(Z) + \delta J_y \rho(y) & \sigma g(Z) + \epsilon (\partial g / \partial \eta)(Z)\\
O & O & O & O
\end{pmatrix}_{Z = \tilde Z},
\end{equation*}
where 
\begin{equation*}
J_y \rho(y) = \diag \left( - \frac{d\rho_1}{dy_1}(y_1),\cdots, - \frac{d\rho_l}{dy_l}(y_l)\right).
\end{equation*}
By our construction of $\rho$, the matrix
\begin{equation*}
J^\rho (Z)\equiv
\begin{pmatrix}
O & O & O & O\\
O & O & O & O\\
O & O & \delta J_y \rho(y) & O\\
O & O & O & O
\end{pmatrix}
\end{equation*}
is negative semidefinite for all $z\in \tilde N$.
Applying Lemma \ref{lem-monotome-log} to $A = J(Z)$ and $B = -J^\rho(Z)$, we know that the rate condition with respect to the modified vector field (\ref{abstract-form-modify}) in $\tilde N\times [0,\epsilon_0]$ is always satisfied under the rate condition of $F$ in $N\times [0,\epsilon_0]$.
Indeed, the local rates which are changed with the replacement of $N$ by $\tilde N$ in Definition \ref{dfn-rate-fast-slow} are $\overrightarrow{\mu_{ss,1}}$ and $\overrightarrow{\mu_{su,2}}$\footnote
{
Actually, constants $\overrightarrow{\mu_{su,1}}$ and $\overrightarrow{\mu_{ss,2}}$ also have effects on modifications to (\ref{abstract-form-modify}). However, these constants are only used in discussions about $W^u(S_\epsilon)$.
}.
Since the original vector field $F$ is $C^{k+1}$, then the inequalities (\ref{rate-fss-1}) - (\ref{rate-fss-4}) replacing $N$ by $\tilde N$ still hold true by choosing $\tilde N$ sufficiently close to $N$.
We shall write corresponding constants as $\overrightarrow{\mu_{ss,1; \tilde N}}$, and so on.
Then, with the help of Lemma \ref{lem-monotome-log}, we have 
\begin{align*}
\overrightarrow{\mu_{ss,1;\tilde N}} &=  \sup_{Z\in \tilde N\times [0,\epsilon_0]} \left\{ l\left(\frac{\partial F_{(b,y)}}{\partial (b,y)}(Z) \right) + M\left\| \frac{\partial F_{(b,y)}}{\partial a}(Z)\right\|\right\}\\ 
	&\geq  \sup_{Z\in \tilde N\times [0,\epsilon_0]} \left\{ l\left(\frac{\partial F_{(b,y)}}{\partial (b,y)}(Z) + J^\rho (Z) \right) + M \left\| \frac{\partial F_{(b,y)}}{\partial a}(Z)\right\|\right\},\\
\overrightarrow{\mu_{su,2;\tilde N}} &= \inf_{Z\in \tilde N\times [0,\epsilon_0]} \left\{ m_l\left( \frac{\partial F_{(a,y)}}{\partial (a,y)}(Z) \right) - \frac{1}{M}\left\| \frac{\partial F_b}{\partial (a,y)}(Z) \right\| \right\}\\
	&\leq \inf_{Z\in \tilde N\times [0,\epsilon_0]} \left\{ m_l\left( \frac{\partial F_{(a,y)}}{\partial (a,y)}(Z) + J^\rho (Z) \right) - \frac{1}{M}\left\| \frac{\partial F_b}{\partial (a,y)}(Z) \right\| \right\}.
\end{align*}
The rightmost constants in the above inequalities are corresponding ones for (\ref{abstract-form-modify}) in $\tilde N$.
Note that these inequalities are all necessities for existence and smoothness of $W^s(S_\epsilon)$.
As a consequence, the rate condition in $N$ yields the rate condition in $\tilde N$, provided $\tilde N$ is chosen sufficiently small so that $N\subset \tilde N$. 

\begin{description}
\item[Step 3.] Cone conditions in $\tilde N\times [0,\epsilon_0]$.
\end{description}

Finally, we discuss the existence of cones in $\tilde N\times [0,\epsilon_0]$.
To this end, we consider a series of arguments about cones in $N\times [0,\epsilon_0]$.

\begin{prop}[Unstable $M$-cone, cf. \cite{Mat2}]
\label{prop-unst-m-cone}
Consider (\ref{abstract-form}). 
Let $N\subset \mathbb{R}^{n+l}$ be a $ch$-set and fix $M > 1$. 
Assume that the following inequalities hold in $N\times [0,\epsilon_0]$: 
\begin{align}
\label{ineq-m-graph-unstable}
&\overrightarrow{\xi_{u,1}} > 0,\\
\label{ineq-m-cone-unstable}
&\overrightarrow{\xi_{u,1}} - \overrightarrow{\xi_{ss,1}} > 0.
\end{align}
Then, letting a function $Q^u_M(t):= |\Delta a(t)|^2 - M^2|\Delta \zeta(t)|^2$, ${Q^u_M}'(t) > 0$ holds for all points in $N\times [0,\epsilon_0]$ with $Q^u_M(t) \geq 0$, where $\zeta = (b,y,\eta)$.
\end{prop}

Before going back to the proof in Step 3, we derive the analogous statements for dynamics in stable direction.

\begin{prop}[Stable $M$-cone]
\label{prop-st-m-cone}
Consider (\ref{abstract-form}). Let $N\in \mathbb{R}^{n+l}$ be a $ch$-set and fix $M > 1$.  Assume that the following inequalities hold in $N\times [0,\epsilon_0]$:
\begin{align}
\label{ineq-m-graph-stable}
&0 > \overrightarrow{\xi_{s,1}},\\
\label{ineq-m-cone-stable}
&\overrightarrow{\xi_{su,1}} - \overrightarrow{\xi_{s,1}} > 0.
\end{align}
Then, defining a function $Q^s_M(t):= |\Delta b(t)|^2 - M^2|\Delta \nu(t)|^2$, ${Q^s_M}'(t) > 0$ with the time-reversal flow holds for all points on $N\times [0,\epsilon_0]$ with $Q^s_M(t) \geq 0$, where $\nu = (a,y,\eta)$.
\end{prop}

We go back to the proof in Step 3.
Arguments in Step 2 show that inequalities (\ref{ineq-m-graph-unstable}), (\ref{ineq-m-cone-unstable}), (\ref{ineq-m-graph-stable}) and (\ref{ineq-m-cone-stable}), estimated in $N\times [0,\epsilon_0]$, lead to the same inequalities in $\tilde N\times [0,\epsilon_0]$.

\begin{description}
\item[Step 4.] Final arguments.
\end{description}
Notice that inequalities (\ref{ineq-m-graph-unstable}), (\ref{ineq-m-cone-unstable}), (\ref{ineq-m-graph-stable}) and (\ref{ineq-m-cone-stable}) are contained in the rate condition (or order $0$) in $N\times [0,\epsilon_0]$.
As a consequence, under the rate condition of order $k$ in $N\times [0,\epsilon_0]$, we have the following statements:
\begin{description}
\item[(From Step 1)] the covering condition on $\tilde N\times [0,\epsilon_0]$ for the time-$h$ map $\Phi_h$ generated by the flow of (\ref{abstract-form-modify}).
\item[(From Step 2)] the local rate condition in $\tilde N\times [0,\epsilon_0]$ for (\ref{abstract-form-modify}) and, by Proposition \ref{prop-rate-flow-map}, for the time-$h$ map $\Phi_h$ generated by the flow of (\ref{abstract-form-modify}).
\item[(From Step 3)] Cone conditions in $\tilde N\times [0,\epsilon_0]$. An alternative condition for the time-$h$ map $\Phi_h$; namely, the backward cone condition (Definition \ref{dfn-back-cone}), follows from Lemma \ref{lem-cone-flow-map} below. 
\end{description}

A series of arguments in Section \ref{section-CZ} to (\ref{abstract-form-modify}) in $\tilde N\times [0,\epsilon_0]$ show that the positively invariant manifold $\tilde{W^s} \subset \tilde N\times [0,\epsilon_0]$ is $C^k$.
Since the restriction of $\tilde{W^s}$ to $N\times \{\epsilon\}$ is exactly $W^s(S_\epsilon)$, then we know that $W^s(S_\epsilon)$ is also $C^k$ and the proof is completed.
\end{proof}


\begin{dfn}[$M$-cone conditions]\rm
\label{dfn-m-cone}
We shall call inequalities (\ref{ineq-m-graph-unstable}) and (\ref{ineq-m-cone-unstable}) the {\em unstable $M$-cone condition in $N$}.
Similarly, we shall call inequalities (\ref{ineq-m-graph-stable}) and (\ref{ineq-m-cone-stable}) the {\em stable $M$-cone condition in $N$}. 
When these conditions are satisfied, {\em the unstable $M$-cone and the stable $M$-cone with the vertex $Z_0 = (a_0, b_0, y_0, \eta_0)$} (in the $(a,b,y)$-coordinate) are given as follows, respectively:
\begin{align*}
C_M^u(Z_0) := \{(a,b,y,\eta)\mid |a - a_0|^2 \geq  M^2(|b - b_0|^2 + |y - y_0|^2 + |\eta - \eta_0|^2)\},\\
C_M^s(Z_0) := \{(a,b,y,\eta)\mid |b - b_0|^2 \geq  M^2(|a - a_0|^2 + |y - y_0|^2 + |\eta - \eta_0|^2)\}. 
\end{align*}
\end{dfn}

Propositions \ref{prop-unst-m-cone} and \ref{prop-st-m-cone} show the transversality of flows on the boundary of cones as well as invariance.
In particular, these properties for stable $M$-cones yield the backward cone condition for time-$t$ maps with sufficiently small $t>0$.
\begin{lem}
\label{lem-cone-flow-map}
Let $N\subset \mathbb{R}^{n_u + n_s + l}$ be a $ch$-set.
Assume that the stable $M$-cone condition holds in $N\times [0,\epsilon_0]$.
Then, for sufficiently small $t > 0$, the time-$t$ map $\Phi_t$ satisfies the backward cone condition in $N\times [0,\epsilon_0]$ in the sense of 
Definition \ref{dfn-back-cone}.
\end{lem}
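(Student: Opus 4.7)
The plan is to argue by contrapositive using the monotonicity of $Q^s_M$ supplied by Proposition \ref{prop-st-m-cone}. Fix $Z_1, Z_2, \Phi_t(Z_1), \Phi_t(Z_2) \in N \times [0, \epsilon_0]$; the backward cone condition amounts to the implication $Q^s_M(\Phi_t(Z_1) - \Phi_t(Z_2)) \geq 0 \Rightarrow Q^s_M(Z_1 - Z_2) \geq 0$. Proposition \ref{prop-st-m-cone} asserts $(Q^s_M)' > 0$ along the time-reversed flow whenever $Q^s_M \geq 0$, which is equivalent to $(Q^s_M)' < 0$ along the forward flow at such points. Hence along any pair of forward trajectories remaining in $N \times [0, \epsilon_0]$, the quantity $Q^s_M$ cannot cross from negative to non-negative values.

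To make this rigorous, I set $q(\tau) := Q^s_M(\Phi_\tau(Z_1) - \Phi_\tau(Z_2))$ and assume for contradiction that $q(0) < 0$ while $q(t) \geq 0$. Continuity of $q$ ensures the set $\{\tau \in [0, t] : q(\tau) \geq 0\}$ is nonempty and closed, so it has a minimum $\tau^* \in (0, t]$ with $q(\tau^*) = 0$ and $q(\tau) < 0$ for all $\tau \in [0, \tau^*)$. Consequently $q$ attains a local minimum from the left at $\tau^*$, forcing $q'(\tau^*) \geq 0$. This contradicts the strict inequality $q'(\tau^*) < 0$ supplied by Proposition \ref{prop-st-m-cone} at the point $\tau^*$ where $q(\tau^*) = 0$, and the contradiction establishes the desired implication.

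The main obstacle is ensuring that the intermediate pair $\Phi_\tau(Z_1), \Phi_\tau(Z_2)$ lies in a region where Proposition \ref{prop-st-m-cone} can be applied for every $\tau \in [0, t]$, since the lemma's hypotheses only control the endpoints $\tau = 0$ and $\tau = t$. I handle this by openness: the stable $M$-cone condition (\ref{ineq-m-graph-stable})--(\ref{ineq-m-cone-stable}) is a system of strict inequalities among continuous functions, so by compactness of $N \times [0, \epsilon_0]$ it persists on a slightly enlarged $ch$-set $N' \supset N$. Uniform continuity of the flow on this compact set then furnishes $t_0 > 0$ such that $\Phi_\tau(Z) \in N'$ for every $Z \in N$ and every $\tau \in [0, t_0]$. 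For any $t \in (0, t_0]$, the previous argument applies with $N$ replaced by $N'$, which completes the proof.
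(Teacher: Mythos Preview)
Your argument is correct. Both your proof and the paper's hinge on Proposition~\ref{prop-st-m-cone}, which gives $q'(\tau) < 0$ in forward time whenever $q(\tau) \geq 0$, but the two proofs organize this differently. The paper assumes at the outset that the entire segments $\Phi_{[0,h]}(Z_1), \Phi_{[0,h]}(Z_2)$ lie in $N \times [0,\epsilon_0]$, and then uses a compactness argument over the set $\{Q^s_M(0)\geq 0\}$ to produce a uniform backward step $t_0 \in (0,h)$ for which $Q^s_M(-t_0) > 0$; the time-$t$ map is then taken to be $\Phi_{h - t_0}$. Your proof instead runs a clean first-crossing-time contradiction: if $q(0) < 0$ and $q(t) \geq 0$, the first $\tau^\ast$ with $q(\tau^\ast) = 0$ would force $q'(\tau^\ast) \geq 0$, contradicting Proposition~\ref{prop-st-m-cone}. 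A small wording slip: at $\tau^\ast$ the function $q$ increases to $0$ from below, so it is not a local minimum from the left; the inequality $q'(\tau^\ast) \geq 0$ nonetheless follows from the one-sided difference quotients, so your conclusion stands.

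Your proof also handles an issue the paper sidesteps. Definition~\ref{dfn-back-cone} only constrains the endpoints $Z_i$ and $\Phi_t(Z_i)$ to lie in $D$, not the intermediate orbit. The paper simply imposes $\Phi_{[0,h]}(Z_i)\subset N\times[0,\epsilon_0]$ as a hypothesis, whereas you earn this by enlarging $N$ to a $ch$-set $N'\supset N$ on which the strict inequalities~(\ref{ineq-m-graph-stable})--(\ref{ineq-m-cone-stable}) persist, and then invoking continuity of the flow to keep $\Phi_{[0,t_0]}(N)\subset N'$. This makes your argument self-contained relative to the stated definition, at the modest cost of one extra openness step.
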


\begin{proof}
Let $Z_1, Z_2 \in N\times [0,\epsilon_0]$ be points such that $\Phi_{[0,h]}(Z_1), \Phi_{[0,h]}(Z_2) \subset N\times [0,\epsilon_0]$ hold for sufficiently small $h>0$.
Assume that $\Phi_h(Z_1)\in C^s_M(\Phi_h(Z_2))$. 
This implies $Q^s_M(t)= |\Delta b(t)|^2 - M^2|\Delta \nu(t)|^2 \geq 0$, where
\begin{align*}
\Delta b(t) &= \pi_b \Phi_{t+h} (Z_1) - \pi_b \Phi_{t+h} (Z_2),\\
\Delta \nu(t) &= \begin{pmatrix}
\pi_a \Phi_{t+h} (Z_1) - \pi_a \Phi_{t+h} (Z_2)\\
\pi_y \Phi_{t+h} (Z_1) - \pi_y \Phi_{t+h} (Z_2)
\end{pmatrix}.
\end{align*}
If $Q^s_M(0) > 0$, then $Q^s_M(-t) > 0$ still holds for sufficiently small $t > 0$.
The remainder is the case $Q^s_M(0) = 0$.
The stable cone condition indicates that $dQ^s_M / d(-t) > 0$ on the boundary $Q^s_M(0) = 0$.
Therefore, for any point $Z_1$ on $(N\times [0,\epsilon_0])\cap \{Q^s_M(0) = 0\}$, there is a positive number $t_{Z_1} > 0$ such that $Q^s_M(-t_{Z_1}) > 0$.
Since $(N\times [0,\epsilon_0])\cap \{Q^s_M(0) \geq 0\}$ is compact, then there is $t_0 \in (0,h)$ such that $Q^s_M(-t_0) > 0$ holds for any $Z_1, Z_2 \in N\times [0,\epsilon_0]$ satisfying $\Phi_h(Z_1)\in C^s_M(\Phi_h(Z_2))$.
Setting $F(z) := \Phi_{h-t_0}(z)$, we know that the above arguments yield the backward cone condition in the sense of Definition \ref{dfn-back-cone}.
\end{proof}

\par
\bigskip
Let $D$ and $\hat D$ be fast-saddle-type blocks given by (\ref{fast-block}) and (\ref{fast-block-2}), respectively.
Theorem \ref{thm-smooth-slow-mfd} says that, under cone conditions, the slow manifold $S_\epsilon$ is contained in the smaller block $D$. 
Obviously $S_\epsilon$ is also contained in $\hat D$ since $D\subset \hat D$. 
Moreover, $S_\epsilon$ is uniquely determined in $\hat D$.
If $d_0 \leq {\rm dist}(\partial (\pi_{a,b}\hat D), \pi_{a,b}D)$, then our observations imply that the distance between $S_\epsilon$ and $\partial \hat D$ in fast components is greater than $d_0$.
Summarizing these arguments, we have the following result, which is a simpler one than \cite{Mat2} and the key result for constructing tubular neighborhoods of $S_\epsilon$.

\begin{cor}[Fast-saddle-type blocks with spaces. cf. \cite{Mat2}]
\label{cor-block-space}
Consider (\ref{ode-ab-coord}). Let $D, \hat D\subset \mathbb{R}^{n+l}$ be fast-saddle-type blocks for (\ref{ode-ab-coord}) such that the coordinate representations $D_c$ and $\hat D_c$ are actually given by (\ref{fast-block}) and (\ref{fast-block-2}), respectively, for a given pair of positive numbers $\{\eta^u, \eta^s\}$. 
Assume that stable and unstable cone conditions hold in $\hat D_c$. Then the same statements as Theorem \ref{thm-smooth-slow-mfd} holds in $\hat D_c$. Moreover, the distance between $\partial \hat D_c$ and the validated slow manifold $S_\epsilon$ is estimated by
\begin{align*}
&{\rm dist}(\pi_a(\partial \hat D_c), \pi_a(W^s(S_\epsilon))) \geq \eta^u,\\
&{\rm dist}(\pi_b(\partial \hat D_c), \pi_b(W^u(S_\epsilon))) \geq \eta^s\quad \text{ for }\epsilon \in [0,\epsilon_0].
\end{align*}
\end{cor}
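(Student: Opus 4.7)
The plan is to derive the corollary in two moves: first invoke Theorem \ref{thm-smooth-slow-mfd} on the larger block $\hat D$ to obtain the slow manifold together with its $C^k$ graph representations, then exploit the explicit transversality estimates already stated after (\ref{fast-block-2}) to localize the manifolds strictly inside the smaller block $D$ in the fast coordinates.

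For the first step, since the stable and unstable $M$-cone conditions (together with the rate conditions they imply, and the fast-saddle block structure) are assumed on $\hat D_c$, Theorem \ref{thm-smooth-slow-mfd} applies verbatim with $N$ replaced by $\hat D$. This produces, for every $\epsilon\in[0,\epsilon_0]$, the slow manifold $S_\epsilon$ and its invariant manifolds $W^u(S_\epsilon)$, $W^s(S_\epsilon)$ in $\hat D$, all $C^k$ and graphs in the appropriate variables. This is nothing more than transcribing Theorem \ref{thm-smooth-slow-mfd} with $N=\hat D$, so no new work is required here.

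The content of the corollary is therefore the distance estimate. The key observation is the explicit transversality stated just after (\ref{fast-block-2}): throughout $\hat D_c\times[0,\epsilon_0]$, the unstable coordinate satisfies $a_j'>0$ whenever $a_j\in[a_j^+,\hat a_j^+]$ and $a_j'<0$ whenever $a_j\in[\hat a_j^-,a_j^-]$, with analogous (but sign-reversed) estimates for $b_j$. Take any point $Z\in W^s(S_\epsilon)$, which by definition has a full forward trajectory remaining in $\hat D$. If some component $\pi_{a_j}Z$ were to lie in $[a_j^+,\hat a_j^+]$, the monotonicity above forces $a_j$ to strictly increase along the trajectory while staying in this interval until it reaches $\hat a_j^+$, whereupon the trajectory would exit $\hat D$ through the fast-exit $\hat D^{f,-}$; this contradicts $Z\in W^s(S_\epsilon)$. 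The symmetric argument rules out $\pi_{a_j}Z\in[\hat a_j^-,a_j^-]$. Hence $\pi_a W^s(S_\epsilon)\subset \prod_j[a_j^-,a_j^+]=\pi_a D_c$, and since $\hat D_c$ is precisely $D_c$ enlarged by $\eta^u$ in every $a$-coordinate, we obtain $\mathrm{dist}(\pi_a(\partial\hat D_c),\pi_a W^s(S_\epsilon))\geq \eta^u$. The estimate for $W^u(S_\epsilon)$ follows by the same reasoning applied in backward time using the $b$-transversality inequalities.

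I do not expect any real obstacle: the only subtle point is verifying that we may indeed conclude $W^s(S_\epsilon)\subset D$ in the $a$-coordinate from the one-sided forward-invariance argument. This is clean because the monotonicity on the strip $[a_j^+,\hat a_j^+]$ is strict and uniform in $\epsilon\in[0,\epsilon_0]$, so no trajectory can linger in that strip; the compactness of the closed strip turns strict inequality into a uniform lower bound on $|a_j'|$, making the exit time finite and ruling out membership in $W^s(S_\epsilon)$.
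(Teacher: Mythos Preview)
Your proof is correct and follows essentially the same approach as the paper. The paper's own argument (given in the paragraph immediately preceding the corollary) is terse: it notes that the invariant manifolds validated in $\hat D$ are already contained in the smaller block $D$, whence the distance estimate is immediate from $\hat D_c$ being $D_c$ enlarged by $\eta^u,\eta^s$; your write-up spells out precisely why this containment holds, via the monotonicity inequalities after (\ref{fast-block-2}) and a finite-exit-time argument, which is exactly the mechanism the paper is implicitly invoking.
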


The main feature of Corollary \ref{cor-block-space} is that slow manifolds as well as their stable and unstable manifolds in Fenichel's theorems are validated in given blocks with an explicit range $\epsilon\in [0,\epsilon_0]$.
Our criteria can be explicitly validated with rigorous numerics, which can be seen in \cite{Mat2}.

\begin{rem}[Fenichel's original invariant manifold theorems]\rm
\label{rem-Fenichel}
A series of invariant manifold theorems in Fenchel's theory (e.g., \cite{F1979}) mainly consists of the following two parts.
Consider (\ref{abstract-form}) and let $S_0\subset \{f(x,y,0)=0\}$ be a subset. 
We assume
\begin{description}
\item[(F)] $S_0$ is given by the graph of the $C^\infty$ function $h_0(y)$ for $y\in Y$, where the set $Y$ is a compact, simply connected domain whose boundary is an $(l-1)$-dimensional $C^\infty$ submanifold. 
$S_0$ is normally hyperbolic.
Finally, under a suitable nonlinear transformation, $S_0$ is given by $\{(x,y,0)\mid x=h_0(y) = 0,y\in Y\}$.
\end{description}
Then, for sufficiently small $\epsilon > 0$ and small $\Delta > 0$, and an appropriate coordinate system $(a,b,y,\epsilon)$,
\begin{description}
\item[1.] there are smooth functions $h^s(b,y,\epsilon)$ and $h^u(a,y,\epsilon)$ such that
\begin{align*}
W_{loc}^s(S_\epsilon) &:= \{(a,b,y,\epsilon)\mid a=h^s(b,y,\epsilon), |a|\leq \Delta, |b|\leq \Delta, y\in Y\},\\
W_{loc}^u(S_\epsilon) &:= \{(a,b,y,\epsilon)\mid b=h^u(a,y,\epsilon), |a|\leq \Delta, |b|\leq \Delta, y\in Y\},\\
S_\epsilon &:= W_{loc}^s(S_\epsilon) \cap W_{loc}^u(S_\epsilon)
\end{align*}
are locally invariant (see \cite{Jones}).
\item[2.] $W_{loc}^s(S_\epsilon)$ and $W_{loc}^u(S_\epsilon)$ admit invariant foliations. Namely, for each $p\in S_\epsilon$, its stable and unstable manifolds $W_{loc}^s(p)$ and $W_{loc}^u(p)$, respectively, are locally invariant.
\end{description}
In particular, $W_{loc}^s(S_\epsilon)$ and $W_{loc}^u(S_\epsilon)$ admit fiber bundle structures over $S_\epsilon$.
These bundles are constructed in a tubular neighborhood $\{|a|\leq \Delta, |b|\leq \Delta, y\in Y\}$ of $S_0$.
The assumption $S_0 = \{x = h_0(y) = 0\}$ is often imposed in abstract settings.
Moreover, in the {\em Fenichel's normal form}\footnote
{
In (\ref{Fenichel-normal-form}), $\Lambda$ and $\Gamma$ are $C^k$ smooth, $n_u\times n_u$ and $n_s \times n_s$ dimensional matrix-valued functions, respectively, $h$ is a $C^k$ smooth function in $\mathbb{R}^l$, and ${\bf H}$ is a $C^k$ rank there tensor, for all $k < \infty$, with $\otimes$ denoting the tensor product.
In component-wise notation, the $y$ component of the vector field is expressed as $y_i' = h_i(y,\epsilon) + \sum_{u=1}^{n_u}\sum_{s=1}^{n_s} H_{ius} a_u b_s$.
}
 (e.g., \cite{JK})
\begin{align}
\notag
a' &= \Lambda(a,b,y,\epsilon)a\\
\label{Fenichel-normal-form}
b' &= \Gamma(a,b,y,\epsilon)b\\
\notag
y' &= h(y,\epsilon) + {\bf H}(a,b,y,\epsilon)\otimes a \otimes b,
\end{align}
the assumption $S_\epsilon = \{x = h_\epsilon(y) = 0\}$ is also imposed for sufficiently small $\epsilon > 0$.
In concrete systems, they are intrinsically two nontrivial problems; one is the concrete form for realizing such assumptions, and another is the choice of radius $\Delta$ of tubular neighborhoods so that they are fast-saddle type blocks.
Corollary \ref{cor-block-space} provides lower bounds of $\Delta$.
\end{rem}

\section{Validating continuous family of eigenpairs}
\label{section-eigenpairs}

As shown in Section \ref{section-preliminary}, fast-saddle-type blocks and cone conditions validate slow manifolds as the graphs of Lipschitzian functions.
This validation leads to considerations of computing eigenpairs of the linearized matrix $f_x(h_\epsilon(y),y,\epsilon)$ on the validated slow manifold $S_\epsilon = \{x=h_\epsilon(y)\}$.
As seen in construction of blocks, eigenpairs of $f_x(h_\epsilon(y),y,\epsilon)$ on $S_\epsilon$ are essential to obtain concrete transformation of coordinate systems from the original ones to the simpler ones like mentioned in Remark \ref{rem-Fenichel} along with $S_\epsilon$ itself.
\par
In this section we consider the eigenvalue problem
\begin{equation}
\label{ev-prob-param}
A(y)u(y) = \lambda(y)u(y)
\end{equation}
of continuous real matrix-valued functions $A(y)$.
In particular, we study this problem for validating the family of eigenpairs $(\lambda(y), u(y))$ depending continuously on $y$ with computer assistance.
\par
Assume that we have {\em approximate} eigenpairs $\{(\tilde \lambda_j, \tilde u_j)\}_{j=1}^n$ of $A$.
We want to validate {\em rigorous} eigenpairs of $A$ by means of $\{(\tilde \lambda_j, \tilde u_j)\}_{j=1}^n$ even in the case that $A$ depends continuously on parameters $y$.
We apply the Newton-like iteration method to (\ref{ev-prob-param}).
A benefit of such an iteration approach is that we can easily extend it to the interval Newton or the Krawczyk method (e.g., \cite{Tbook}), which enables us to validate a continuous parameter family of eigenpairs.
\par
First consider real eigenpairs.
Our aim is the family of eigenpairs $\{\lambda_i(y),u_i(y)\}_{i=1}^n$ which varies continuously, {\em not only locally but globally} on $y\in Y$.
We then consider the following formulations for applying problems to (not necessarily small) $Y$:
\begin{equation}
\label{ev-prob-real}
F_{\mathbb{R}}(u,\lambda;y) := \begin{pmatrix}
A(y)u - \lambda u\\
|u|^2 - 1
\end{pmatrix}
=\begin{pmatrix}
0\\
0
\end{pmatrix},\quad y\in Y,\quad \lambda\in \mathbb{R},\quad u\in \mathbb{R}^n,
\end{equation}
where $Y$ is a compact, contractible subset of $\mathbb{R}^l$.
Without the loss of generality, we assume that $Y$ is an interval set $[Y]$, namely, the interval hull of a set.
The $n\times n$ matrix-valued function $A(y)$ is assumed to be continuous with respect to $y\in Y$. 
\par
\bigskip
Next consider the problem involving complex eigenvalues.
An expected formulation for complex eigenpairs from real ones will be the following system:
\begin{equation*}
\tilde F_{\mathbb{C}}(u,\lambda;y) := \begin{pmatrix}
A(y)u - \lambda u\\
|u|^2 - 1
\end{pmatrix}
=\begin{pmatrix}
0\\
0
\end{pmatrix},\quad y\in Y,\quad \lambda\in \mathbb{C},\quad u\in \mathbb{C}^n,
\end{equation*}
which actually fails to provide unique determination of eigenvectors.
Indeed, given an eigenpair $(\lambda,u)\in \mathbb{C}^{1+n}$ with $|u|=1$, where $u$ is complex, then $(\lambda, e^{i\theta}u)\in \mathbb{C}^{1+n}$ is also an eigenpair and $|e^{i\theta}u| = 1$ for any $\theta \in [0,2\pi)$.
In other words, we need add a restriction of $\theta$ so that an eigenvector $u$ locates a certain direction. 
To this end, we add an equation $\im (u_1) = 0$ to the equation $\tilde F_\mathbb{C} = 0$; namely, the first component of $u$ should be real. 
The new equation is
\begin{equation*}
\hat F_{\mathbb{C}}(u,\lambda;y) := \begin{pmatrix}
A(y)u - \lambda u\\
|u|^2 - 1\\
\im u_1
\end{pmatrix}
=\begin{pmatrix}
0\\
0\\
0
\end{pmatrix},\quad y\in Y,\quad \lambda\in \mathbb{C},\quad u\in \mathbb{C}^n.
\end{equation*}
We further divide the original problem into the real part and the imaginary part, which is our system for complex eigenpairs:
\begin{equation}
\label{ev-prob-complex}
F_{\mathbb{C}}(u,\lambda;y) := \begin{pmatrix}
A(y)u^r - (\lambda^r u^r - \lambda^i u^i)\\
A(y)u^i - (\lambda^r u^i + \lambda^i u^r )\\
\sum_{j=1}^n ((u_j^r)^2 + (u_j^i)^2) - 1\\
u_1^i
\end{pmatrix}
= \begin{pmatrix}
0\\
0\\
0\\
0
\end{pmatrix},
\end{equation}
where $u = u^r + iu^i$ and $\lambda = \lambda^r + i\lambda^i$.
The linearized matrix of $F_\mathbb{C}$ at $(u^0,\lambda^0, y_0)$ is
\begin{equation}
\label{A_C}
\hat A_{\mathbb{C}} = 
\begin{pmatrix}
A - (\lambda^0)^r I_n & (\lambda^0)^i I_n &  -(u^0)^r & (u^0)^i\\
- (\lambda^0)^i I_n & A - (\lambda^0)^r I_n &  -(u^0)^i & -(u^0)^r\\
2u^i & 2u^r & 0 & 0\\
0 & e_1^T & 0 & 0\\
\end{pmatrix},
\end{equation}
where $e_j^T = (0,\cdots, 0,1,0,\cdots, 0)$ is the unit vector whose $j$-th component is the only nontrivial element.
In the present paper, we apply the Krawczyk iterations to validating $y$-continuous family of solutions.
\par
Let $y_0\in [Y]$ be fixed and $\{\lambda(y_0),u(y_0)\} \equiv \{\lambda^0,u^0\}$ be an (approximate) eigenpair.
First we focus on real eigenvalues and associated eigenvectors.
Define then the Krawczyk-type operator $K_{\mathbb{R}}:\mathbb{R}^{n+1}\to \mathbb{R}^{n+1}$ associated with (\ref{ev-prob-real}) as
\begin{align}
\notag
K_{\mathbb{R}}([u], [\lambda]) &= K_{\mathbb{R}}([u], [\lambda]; y_0, u^0, \lambda^0)\\
\label{Krawczyk-ev}
&:= 
\begin{pmatrix}
u^0 \\ \lambda^0
\end{pmatrix} - C_\mathbb{R}\begin{pmatrix}
(A(y_0)- \lambda^0 I) u^0 \\ 
|u^0|^2-1
\end{pmatrix} + (I-C_\mathbb{R}\cdot DF_{\mathbb{R}}([u],[\lambda] ;[Y])) \cdot \begin{pmatrix}
[u] - u^0 \\ [\lambda] - \lambda^0
\end{pmatrix},
\end{align}
where we use the following notations:
\begin{itemize}
\item $C_\mathbb{R}$ is a nonsingular matrix close to the inverse of
\begin{equation*}
\hat A_\mathbb{R} (y_0) = \begin{pmatrix}
A(y_0) - \lambda^0 I & -u^0\\
2(u^0)^T & 0
\end{pmatrix},
\end{equation*}
which $\hat A_\mathbb{R}(y_0)$ is expected to be nonsingular.
\item $[u]\subset \mathbb{R}^n$ is an interval set in the $u$-component containing $u^0$.
\item $[\lambda]\subset \mathbb{R}$ is an interval in the $\lambda$-component containing $\lambda^0$.
\end{itemize}
The Krawczyk-type operator $K_{\mathbb{C}}:\mathbb{R}^{2(n+1)}\to \mathbb{R}^{2(n+1)}$ associated with the complex eigenpair is defined in the similar manner to $K_{\mathbb{R}}$ as follows:
\begin{align}
\notag
K_{\mathbb{C}}([u],[\lambda]) &= K_{\mathbb{C}}([u],[\lambda] ; y_0,u^0, \lambda^0)\\
\label{Krawczyk-ev-complex}
&:= 
\begin{pmatrix}
(u^0)^r \\ (u^0)^i \\ (\lambda^0)^r \\ (\lambda^0)^i
\end{pmatrix} - C_{\mathbb{C}}
F_{\mathbb{C}}(u^0,\lambda^0,y;p)
 + (I-C_{\mathbb{C}}\cdot DF_{\mathbb{C}}([u],[\lambda]; [Y])) \cdot \begin{pmatrix}
[u^r] - (u^0)^r \\ 
[u^i] - (u^0)^i \\ 
[\lambda^r] - (\lambda^0)^r\\
[\lambda^i] - (\lambda^0)^i
\end{pmatrix},
\end{align}
where we use the following notations:
\begin{itemize}
\item $C_\mathbb{C}$ is a nonsingular matrix close to the inverse of $\hat A_{\mathbb{C}}$ in (\ref{A_C}), which is expected to be nonsingular.
\item $[u^r]\subset \mathbb{R}^n$ is an interval set in the $u^r$-component containing $(u^0)^r$.
\item $[u^i]\subset \mathbb{R}^n$ is an interval set in the $u^i$-component containing $(u^0)^i$.
\item $[\lambda^r]\subset \mathbb{R}$ is an interval in the $\lambda^r$-component containing $(\lambda^0)^r$.
\item $[\lambda^i]\subset \mathbb{R}$ is an interval in the $\lambda^i$-component containing $(\lambda^0)^i$.
\end{itemize}
If the matrix $C_\mathbb{K}$, $\mathbb{K} = \mathbb{R}$ or $\mathbb{C}$, is nonsingular, then $K_\mathbb{K}$ is well-defined.
Regularity of $C_\mathbb{K}$ corresponds to the simpleness of eigenvalues, which is shown in the following lemma.
\begin{lem}
\label{lem-nonsing}
For an eigenpair satisfying $F_{\mathbb{R}}(u,\lambda;y_0) = 0$, $\lambda\in \mathbb{R}$ is simple if and only if
\begin{equation*}
J_\mathbb{R}(u,\lambda) := \begin{pmatrix}
A-\lambda I_n & -u\\
2u^T & 0
\end{pmatrix}
\end{equation*}
is nonsingular, where $A=A(y_0)$.
\par
Similarly, for a complex eigenpair $(\lambda,u)\in \mathbb{C}^{n+1}$ satisfying $F_{\mathbb{C}}(u,\lambda;y_0) = 0$, $\lambda\in \mathbb{C}$ is simple if and only if
\begin{equation*}
J_{\mathbb{C}}(u,\lambda) := \begin{pmatrix}
A-\lambda^r I_n & \lambda^i I_n & -u^r & u^i\\
-\lambda^i I_n & A-\lambda^r I_n  & -u^r & u^i\\
2(u^r)^T & 2(u^i)^T & 0 & 0\\
0 & e_1^T & 0 & 0\\
\end{pmatrix}
\end{equation*}
is nonsingular.
\end{lem}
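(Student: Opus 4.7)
The plan is to characterize $\ker J_\mathbb{R}$ and $\ker J_\mathbb{C}$ explicitly and show that each is trivial exactly when $\lambda$ is a simple eigenvalue. The real case serves as a template; the complex case requires some extra bookkeeping for the phase-fixing coordinate.

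For the real case, I would first observe that $(v,\mu)\in\mathbb{R}^{n+1}$ lies in $\ker J_\mathbb{R}(u,\lambda)$ if and only if
\begin{equation*}
(A-\lambda I_n)v = \mu u, \qquad u^T v = 0,
\end{equation*}
and then split by whether $\mu$ vanishes. Assuming $\lambda$ is simple, its Jordan block is $1\times 1$, so the Fitting decomposition $\mathbb{R}^n = \ker(A-\lambda I_n)\oplus \mathrm{range}(A-\lambda I_n)$ holds with $\ker(A-\lambda I_n) = \mathbb{R}u$. If $\mu=0$, then $v = cu$ and $u^Tv = c|u|^2 = c$ forces $c=0$. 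If $\mu\neq 0$, then $u = \mu^{-1}(A-\lambda I_n)v$ lies in the range, but $u$ is also in the kernel, so $u=0$, contradicting $|u|=1$. For the converse, I would build a nontrivial kernel element by hand. If the geometric multiplicity of $\lambda$ exceeds one, pick an eigenvector $v_0$ linearly independent of $u$, orthogonalize $v:=v_0-(u^Tv_0)u$, and observe $(v,0)\in\ker J_\mathbb{R}\setminus\{0\}$. If the geometric multiplicity is one but the algebraic multiplicity exceeds one, take a generalized eigenvector $w$ with $(A-\lambda I_n)w=u$ and set $v:=w-(u^Tw)u$; then $(v,1)\in\ker J_\mathbb{R}\setminus\{0\}$.

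For the complex case, reassembling the real and imaginary blocks of $J_\mathbb{C}$ shows that $(v,\mu)\in\mathbb{C}^{n+1}$ lies in $\ker J_\mathbb{C}$ if and only if
\begin{equation*}
(A-\lambda I_n)v = \mu u, \qquad \mathrm{Re}(u^\ast v) = 0, \qquad \mathrm{Im}(v_1)=0.
\end{equation*}
The same $\mu$-dichotomy and the complex Fitting decomposition for the simple eigenvalue $\lambda$ reduce the argument to the real template, with $\mathbb{R}u$ replaced by $\mathbb{C}u$. The new wrinkle is that the purely imaginary multiples $v=i\alpha u$ already satisfy $(A-\lambda I_n)v=0$ and $\mathrm{Re}(u^\ast v)=0$; the phase-fixing condition $\mathrm{Im}(v_1)=\alpha\,\mathrm{Re}(u_1)=0$ kills this residual gauge only under the transversality hypothesis $\mathrm{Re}(u_1)\neq 0$, which is precisely the nondegeneracy that makes $u_1^i=0$ a proper normalization. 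In the non-simple direction, one chooses the shift $c\in\mathbb{C}$ in the replacement $v:=v_0-cu$ (resp.\ $v:=w-cu$) to enforce both the Hermitian orthogonality and the vanishing of $\mathrm{Im}(v_1)$, which is always possible provided $\mathrm{Re}(u_1)\neq 0$.

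The main obstacle I anticipate is the complex case, specifically checking that the two-parameter gauge $v\mapsto v+cu$ ($c\in\mathbb{C}$) is exactly neutralized by the two scalar constraints $\mathrm{Re}(u^\ast v)=0$ and $\mathrm{Im}(v_1)=0$. This is where the implicit assumption $\mathrm{Re}(u_1)\neq 0$ must be invoked; once it is, the linear algebra is fully parallel to the real case.
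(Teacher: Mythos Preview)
Your approach---characterize $\ker J_{\mathbb{K}}$ as the pairs $(v,\mu)$ with $(A-\lambda I)v=\mu u$ plus the normalization constraints, then split on whether $\mu$ vanishes---is exactly the route the paper takes. Your treatment of the complex case is in fact more careful than the paper's: you correctly isolate the residual gauge direction $v=i\alpha u$ and the implicit transversality hypothesis $\mathrm{Re}(u_1)\neq 0$ (equivalently $u_1\neq 0$, since $u_1^i=0$) needed to eliminate it, a point the paper passes over when it asserts without full justification that one also obtains $\mathrm{Im}\langle z,u\rangle=0$.
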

\begin{proof}
See Lemma 2 in \cite{Y1980} for $\lambda \in \mathbb{R}$ and $J_{\mathbb{R}}$.
The proof for $\lambda \in \mathbb{C}$ and $J_{\mathbb{C}}$ is similar, but we give a precise proof here.
\par
Assume first that $J=J_\mathbb{C}$ is singular. 
Then there is a nonzero vector $v = (z^r,z^i,\alpha^r,\alpha^i)^T$ such that $Jv = 0$, which implies
\begin{align*}
Az^r - (\lambda^r z^r - \lambda^i z^i) - (\alpha^r u^r - \alpha^i u^i) &= 0,\\
Az^i - (\lambda^i z^r + \lambda^r z^i) - (\alpha^r u^i + \alpha^i u^r) &= 0,
\end{align*}
i.e., $Az-\lambda z = \alpha u$, where $z = z^r+iz^i$ and $\alpha = \alpha^r + i\alpha^i$.
We also have $z^ru^r + z^iu^i =0$, which indicates ${\rm Re}\langle z,u\rangle = 0$ with the standard inner product $\langle \cdot, \cdot\rangle$ on $\mathbb{C}^n$.
Moreover, the vector $w = (-z^i,z^r,\alpha^i,-\alpha^r)^T$ is nonzero and leads the same conclusions as $v$ with ${\rm Im}\langle z,u\rangle = 0$ instead of ${\rm Re}\langle z,u\rangle = 0$.
There are two cases for these statements; the first is $\alpha = 0$ and the second is $\alpha \not = 0$.
The first case means $z\not = 0$, $Az=\lambda z$ and $\langle z,u\rangle = 0$, which implies that $z$ is another eigenvector of $\lambda$ independent of $u$.
The second case means $Az-\lambda z = \alpha u\not = 0$ and $(A- \lambda I_n)^2 z = \alpha (A- \lambda I_n) u = 0$ since $u$ is the eigenvector of $\lambda$.
Therefore, $\lambda$ is at least double eigenvalue and hence $\lambda$ is not simple.
\par
Conversely, suppose that $\lambda$ is not simple. 
Assume further that there is an eigenvector $\tilde z$ of $\lambda$, which can be chosen so that $\langle \tilde z,u\rangle = 0$.
In particular, $\tilde z\not = 0$ and $A\tilde z = \lambda \tilde z$ hold.
Now there is an element $\theta_1\in [0,2\pi)$ such that the first component of $e^{i\theta_1}\tilde z$ is real.
From the linearity of eigenvalue problems,  $e^{i\theta_1} \tilde z\not = 0$ and $A(e^{i\theta_1} \tilde z) = e^{i\theta_1}A \tilde z  = e^{i\theta_1} \lambda \tilde z = \lambda (e^{i\theta_1}\tilde z)$ still hold.
Thus the vector $\tilde v := ((e^{i\theta_1}\tilde z)^r, (e^{i\theta_1}\tilde z)^i, 0,0)^T$ satisfies $J\tilde v = 0$ and hence $J$ is singular.
If there is no second eigenvector, then there is a vector $\hat z\not = 0$ such that $(A-\lambda I_n)\hat z = u$ and $\langle \hat z,u\rangle = 0$ and that the first component of $e^{i\hat \theta_1}\hat z$ is real for some $\hat \theta_1\in [0,2\pi)$, since $\lambda$ is not simple.
Hence the vector $\hat v := ((e^{i\hat\theta_1}\hat z)^r, (e^{i\hat\theta_1}\hat z)^i, 1,0)^T$ is nonzero and satisfies $J\hat v = 0$, which implies that $J$ is singular.
\end{proof}

Given an initial region ${\bf X}_0 \equiv ([u]_0, [\lambda]_0)\subset \mathbb{R}^{n+1}$ or $([u^r]_0, [u^i]_0, [\lambda^r]_0, [\lambda^i]_0)\subset \mathbb{R}^{2(n+1)}$, define the sequence of interval sets
\begin{equation*}
{\bf X}_{k+1} := K({\bf X}_k,[Y])\cap {\bf X}_k,\quad k=0,1,2,\cdots,
\end{equation*}
where $K=K_{\mathbb{K}}$.
As a consequence of ordinary Krawczyk iterations, we obtain the following result.
\begin{prop}
\label{prop-Krawczyk}
Let $\mathbb{K} = \mathbb{R}$ or $\mathbb{C}$.
Assume that $K({\bf X},[Y]) = K_{\mathbb{K}}({\bf X},[Y])$ in (\ref{Krawczyk-ev}) or (\ref{Krawczyk-ev-complex}) is well-defined, namely, $K({\bf X},y)$ is well-defined for all $y\in [Y]$.
Further assume that one of components of $[u]_0$ in ${\bf X}_0$ does not contain $0\in \mathbb{R}$.
Then the following statements hold: for some $k\geq 0$,
\begin{enumerate}
\item if ${\bf X}_k$ contains a zero $x^\ast$ of $F_{\mathbb{K}}(\cdot, y)$, then so does $K({\bf X}_{k+1},y)\cap {\bf X}_k$;
\item for each $y\in Y$, if $K({\bf X}_k,y)\cap {\bf X}_k = \emptyset$, then ${\bf X}_k$ contains no zeros of $F_{\mathbb{K}}(\cdot, y)$;
\item If $K({\bf X}_k,[Y])\subset \Int ({\bf X}_k)$, then for all $y\in [Y]$, ${\bf X}_k$ contains exactly one zero of $F_{\mathbb{K}}$. 
Moreover, the family of zeros $\{x^\ast(y) = (\lambda^\ast(y),u^\ast(y))\}$ depends continuously on $y\in [Y]$.
As a consequence, we have a continuous family $\{\lambda^\ast(y),u^\ast(y)\}$ of an eigenpair of $A(y)$ on $[Y]$.
Suppose further that the validated eigenvalue $\lambda(y)$ is simple for all $y\in [Y]$. 
Then the associated eigenvector $u(y)$ is uniquely determined.
\end{enumerate}
\end{prop}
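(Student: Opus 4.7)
My plan is to treat the three assertions in turn, relying on the standard Krawczyk machinery adapted to the parameter-dependent setting, with Lemma 4.2 invoked at the very end to pin down eigenvector uniqueness.

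For assertion (1), I would start from the mean value identity
\begin{equation*}
F_{\mathbb{K}}(x^\ast,y) - F_{\mathbb{K}}(x_0,y) = M(x^\ast-x_0),\qquad M := \int_0^1 DF_{\mathbb{K}}\bigl(x_0+t(x^\ast-x_0),y\bigr)\,dt,
\end{equation*}
where $x_0=(u^0,\lambda^0)$ is the expansion point used in the definition of $K_{\mathbb{K}}$. Since $F_{\mathbb{K}}(x^\ast,y)=0$, this gives $CF_{\mathbb{K}}(x_0,y) = -CM(x^\ast-x_0)$, and substituting into the definition yields
\begin{equation*}
x^\ast = x_0 - CF_{\mathbb{K}}(x_0,y) + (I-CM)(x^\ast-x_0).
\end{equation*}
By convexity of the box $\mathbf{X}_k$ the segment from $x_0$ to $x^\ast$ lies in $\mathbf{X}_k$, so $M$ is an element of the interval enclosure $DF_{\mathbb{K}}(\mathbf{X}_k,[Y])$, and the right-hand side lies in $K_{\mathbb{K}}(\mathbf{X}_k,y)$. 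This shows $x^\ast\in K_{\mathbb{K}}(\mathbf{X}_k,y)\cap \mathbf{X}_k$. Assertion (2) is then its direct contrapositive applied fiber-wise in $y$.

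For the existence part of (3), I would freeze $y\in[Y]$ and observe that $K_{\mathbb{K}}(\cdot,y)\subset K_{\mathbb{K}}(\cdot,[Y])\subset \Int(\mathbf{X}_k)$, so $K_{\mathbb{K}}(\cdot,y)$ maps $\mathbf{X}_k$ continuously into itself; Brouwer's fixed-point theorem furnishes a fixed point, which one checks to be a zero of $F_{\mathbb{K}}(\cdot,y)$ after establishing nonsingularity of $C$ (a standard consequence of $\|I-C\cdot DF_{\mathbb{K}}(\mathbf{X}_k,[Y])\|<1$, which is forced by the strict inclusion into the interior). Uniqueness within $\mathbf{X}_k$ follows because, had $x^{\ast\ast}$ been a second zero, applying (1) with $x_0$ replaced by $x^\ast$ and invoking contractivity of $(I-C\cdot DF_{\mathbb{K}})$ would force $x^\ast=x^{\ast\ast}$. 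Continuous dependence of $x^\ast(y)$ on $y$ then comes from the implicit function theorem: $DF_{\mathbb{K}}(x^\ast(y),y)$ is invertible (again via contractivity) and $F_{\mathbb{K}}$ is continuous in $y$ through $A(y)$.

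The final clause, which I expect to be the only delicate point, is the reduction from a unique zero of $F_{\mathbb{K}}$ to a unique eigenvector of $A(y)$. Here I would invoke Lemma 4.2: if the validated $\lambda(y)$ is simple, then the linearization $J_{\mathbb{K}}(u(y),\lambda(y))$ is nonsingular, so in the real case the only remaining ambiguity is the sign flip $u\mapsto -u$, which the hypothesis that some component of $[u]_0$ excludes $0$ breaks, and in the complex case the circle of unit-phase factors $u\mapsto e^{i\theta}u$, which is broken by the added equation $\im u_1=0$ together with the same sign-exclusion condition applied to the remaining $\pm$ ambiguity; both constraints are built into $F_{\mathbb{K}}$, so uniqueness of the zero of $F_{\mathbb{K}}$ inside $\mathbf{X}_k$ translates exactly to uniqueness of the normalized eigenvector. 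The main conceptual check is simply verifying that the sign-exclusion hypothesis on $[u]_0$ is preserved by the iteration, which is immediate from $\mathbf{X}_{k+1}\subset \mathbf{X}_k$.
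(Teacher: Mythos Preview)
Your argument is essentially the standard Krawczyk-method proof that the paper simply defers to a textbook (Theorem 5.9 in \cite{Tbook}); assertions (1)--(2) and the existence/uniqueness parts of (3) are handled correctly and in more detail than the paper gives.

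One point deserves care. For the continuous dependence of $x^\ast(y)$ on $y$ you invoke the implicit function theorem, but the paper assumes only that $A(y)$ is \emph{continuous}, not $C^1$, so the classical IFT does not apply as stated. The paper instead appeals to the general continuous dependence of simple eigenpairs on the matrix entries. A cleaner route, already implicit in your own setup, is the uniform contraction principle: the Newton-like map $x\mapsto x - CF_{\mathbb{K}}(x,y)$ is a contraction on $\mathbf{X}_k$ uniformly in $y\in[Y]$ (by your estimate $\|I-C\,DF_{\mathbb{K}}(\mathbf{X}_k,[Y])\|<1$), and its unique fixed point then depends continuously on the continuous parameter $y$. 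Your treatment of the final uniqueness clause via Lemma~\ref{lem-nonsing} together with the sign-exclusion hypothesis is slightly more elaborate than the paper's, which argues directly from the normalization $|u|^2=1$ and the component of $[u]_0$ excluding $0$ without explicitly naming the lemma; both reach the same conclusion.
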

\begin{proof}
All the statements with fixed $y\in Y$ are just consequences of Krawczyk iterating method (e.g., Theorem 5.9 in \cite{Tbook}).
The rest is the proof of the third statement.
The assumption $K({\bf X},[Y])\subset \Int ({\bf X})$ implies that $K({\bf X},y)\subset \Int ({\bf X})$ holds for each $y\in Y$.
The ordinary Krawczyk iteration results thus holds true for all $y\in K$, namely, we have a family $\{x(y)\mid y\in Y\}\subset {\bf X}$ such that $x(y)$ is the unique zero of $F_{\mathbb{K}}(\cdot, y)$ for each $y\in Y$.
Zeros $\{x(y)\}$ of $F_{\mathbb{K}}$ corresponds to the eigenpair $(u(y),\lambda(y))$. The continuous dependence of $x(y)$ follows from the continuous dependence of eigenpairs of continuous matrix-valued functions on $y$. 
The second equation $|u(y)|^2 = 1$ restricts the norm of eigenfuctions, and hence $u(y)$ is uniquely determined up to signatures.
Now the signature of $u(y)$ is also uniquely determined, since one of components $[u]_0\ni u(y)$ does not contain $0$. 
This fact shows the uniqueness of $u(y)$ for all $y\in [Y]$.
\end{proof}

In the next section, we apply Proposition \ref{prop-Krawczyk} to $A(y) = f_x(h_\epsilon(y),y,\epsilon)$ with $y$ and $\epsilon$ as parameters, and obtaining a continuous family of eigenvectors at normally hyperbolic slow manifolds $S_\epsilon = \{(x,y,\epsilon)\mid x=h_\epsilon(y)\}$.
As an immediate consequence, we obtain vector bundles over $S_\epsilon$.
Using these bundles, we validate tubular neighborhoods of $S_\epsilon$ with explicit radii.

\section{Validations of tubular neighborhoods of slow manifolds}
\label{section-main}
We have already seen in Sections \ref{section-block} and \ref{section-inv-mfd} that fast-saddle-type blocks and cone conditions validate slow manifolds with their stable and unstable manifolds.
Note that fast-saddle-type blocks in practical validations are rectangular up to affine transformations and hence they are candidates of tubular neighborhoods.
The essence for constructing blocks is {\em the choice of an appropriate coordinate}.
This is locally realized as shown in Section \ref{section-block}, but it is a non-trivial question if such a coordinate, possibly depending on slow variables, can be chosen continuously and globally.
The problem relates to {\em the whole continuations of bases on fibers} regarding a compact domain of slow variables as a base space of vector bundles.
\par
Validations of eigenpairs of $f_x(h_0(y),y,0)$ along the critical manifold $S_0$ on an interval set $Y$ give bases of vector bundles over $S_0$ below. 
As Fenichel's theorems show, normally hyperbolic invariant manifold $S_0\subset \{f(x,y,0)=0\} = \{x=h_0(y)\}$ for (\ref{fast-slow})$_0$ admits an invariant foliation of its stable and unstable manifold:
\begin{equation*}
W^s(S_0) = \bigcup_{p\in S_0}W^s(p),\quad W^u(S_0) = \bigcup_{p\in S_0}W^u(p).
\end{equation*}
It follows that, for each $p\in S_0$, eigenvectors $\{v^s_1(p),\cdots, v^s_{n_s}(p)\}$ of $f_x(h_0(y),y,0)$ associated with eigenvalues with negative real part generate a basis of $T_pW^s(p)$ with $p=(h_0(y),y)$.
Similarly, eigenvectors $\{v^u_1(p),\cdots, v^u_{n_u}(p)\}$ of $f_x(h_0(y),y,0)$ associated with eigenvalues with positive real part generate a basis of $T_pW^u(p)$.
Since eigenvectors vary continuously on $p\in S_0$, the collections
\begin{equation*}
V^s_0 \equiv \bigcup_{p\in S_0}T_pW^s(p)\quad \text{ and }\quad V^u_0 \equiv \bigcup_{p\in S_0}T_pW^u(p)
\end{equation*}
become vector bundles over $S_0$.
In other words, families of eigenvectors $\{v^s_i(p)\mid i=1,\cdots, n_s, p\in S_0\}$ and $\{v^u_i(p)\mid i=1,\cdots, n_u, p\in S_0\}$ generate vector bundles over $S_0$.
These bundles can be actually constructed by arguments in Section \ref{section-eigenpairs} with rigorous numerics, as shown below.
Moreover, the validation procedure in Section \ref{section-eigenpairs} gives eigenvectors $\{v^s_{\epsilon,1}(p),\cdots, v^s_{\epsilon,n_s}(p)\}$ and $\{v^u_{\epsilon,1}(p),\cdots, v^u_{\epsilon,n_u}(p)\}$ of the linearized matrix $f_x(h_\epsilon(y),y,\epsilon)$ at $p=(h_\epsilon(y),y)\in S_\epsilon$ for $\epsilon > 0$.
These families of eigenvectors generate vector bundles
\begin{equation*}
V^s_\epsilon \equiv \bigcup_{p\in S_\epsilon}V^s_\epsilon(p)\quad \text{ and }\quad V^u_\epsilon \equiv \bigcup_{p\in S_\epsilon}V^u_\epsilon(p)
\end{equation*}
where $V^s_\epsilon(p) = {\rm span}\{v^s_{\epsilon,1}(p),\cdots, v^s_{\epsilon,n_s}(p)\}$ and $V^u_\epsilon(p) = {\rm span}\{v^u_{\epsilon,1}(p),\cdots, v^u_{\epsilon,n_u}(p)\}$.

\par
\bigskip
In this section, we firstly provide algorithms for constructing vector bundles $V^\alpha_\epsilon\ (\ast = s,u)$ over $S_\epsilon$ as well as validating $S_\epsilon$ itself.
Secondly, we apply such validated bundles and the procedure in Section \ref{section-block} to constructing enclosures of $S_\epsilon$ with explicit and uniform lower bounds of radius.
We then show that, for each $\epsilon \in [0,\epsilon_0]$, these enclosures contain tubular neighborhoods centered at $S_\epsilon$ with certain radii, which is our validation methodology of tubular neighborhoods of $S_\epsilon$.
The numerical validation of tubular neighborhoods  leads geometrically simple settings for considering dynamics around slow manifolds like the Exchange Lemma (e.g., \cite{JKK,JK}).
Thirdly, we extend the idea of tubular neighborhood validations to cones.
Note that these methodologies can be incorporated with rigorous numerics to various concrete fast-slow systems.
Also note that, if we add rate conditions in Definition \ref{dfn-rate-fast-slow} in our validations, we can validate various neighborhoods of {\em smooth} slow manifolds, which are usually considered as tubular neighborhoods of invariant manifolds (e.g., \cite{BLZ2000}).

\subsection{Vector bundles $V^\alpha_\epsilon$}
\label{section-bundle-validate}
First we validate vector bundles over slow manifolds.
Our procedure of $V^\alpha_\epsilon$ stated in the beginning of this section consists of the following validation processes:
\begin{itemize}
\item Validate slow manifolds $S_\epsilon$ with the graph representation $x = h_\epsilon(y)$ on $Y$;
\item Validate eigenvectors of $f_x(h_\epsilon(y),y,\epsilon)$ at each point on $S_\epsilon$.
\end{itemize}

In order to compute eigenvectors of $f_x(h_\epsilon(y),y,\epsilon)$, we have to know where the slow manifold $S_\epsilon$ is.
Fast-saddle-type blocks for all $y\in Y\subset \mathbb{R}^l$ and cone conditions validate neighborhoods of slow manifolds with graph representations on $Y$ as shown in Sections \ref{section-block} and \ref{section-inv-mfd}.
Eigenvectors of $f_x(h_\epsilon(y),y,\epsilon)$ can be thus validated by the procedure discussed in Section \ref{section-eigenpairs} with these blocks.
Iterating these steps with details below, we can validate $S_\epsilon$ and vector bundles over $S_\epsilon$ at the same time.
In any cases, we need to validate small neighborhoods of slow manifolds as {\em seeds} in our iteration steps.
We then use the following terminologies as simplified notations.

\begin{dfn}[Seeds]\rm
Construct an affine fast-saddle-type block following the procedure in Section \ref{section-block}: in particular, $D$ in (\ref{fast-block}).
We shall say an affine fast-saddle-type block $D$, or equivalently $TD$ with an affine transformation $T$ and $\eta^\alpha \equiv 0$ in Section \ref{section-block} satisfying cone conditions a {\em seed}.
\end{dfn}
Seeds describe enclosures of $S_\epsilon$ in the present validation steps.
The detailed algorithm for constructing $S_\epsilon$ and $V^\alpha_\epsilon$ is the following.

\begin{alg}[Construction of $S_\epsilon$ and $V^\alpha_\epsilon$]
\label{alg-bundle}
Let $Y\subset \mathbb{R}^l$ be a compact, contractible set and $\epsilon_0 > 0$ be a given positive number.
Divide $Y$ into small pieces of interval sets $Y=\bigcup_{j=1}^{m_0}Y_j$.
For each $j=1,\cdots, m_0$,
\begin{enumerate}
\item Construct a seed $T_jD_j$ on $Y_j$, with an affine transformation $T_j$, containing a branch of slow manifolds for $\epsilon\in [0,\epsilon_0]$ (cf. Section \ref{section-block}).
Here keep the information of numerical (right) eigenpairs $\{\lambda_i^j,u_i^j\}_{i=1}^n$ at a point $(x_j,y_j)\in T_jD_j$ used in constructing $T_jD_j$, which are re-used in Step 3. 
Let $P^j = [u_1^j,\cdots, u_n^j]$ be the corresponding eigenmatrix, which is used in Algorithm \ref{alg-nbh} later.
Note that we may set $\epsilon = 0$ for calculating eigenpairs of $f_x(x_j,y_j,\epsilon)$.
Moreover, we prepare the left eigenpairs $\{\lambda_i^j,p_i^j\}_{i=1}^n$ of $f_x(x_j,y_j,0)$ for Step 3.
\item Apply Gershgorin's Circle Theorem to validating enclosures of eigenvalues $\{\lambda_i(T_jD_j)\}_{i=1}^n$ of $f_x(x,y,\epsilon)$ on $T_jD_j$ and verify if these enclosures are mutually disjoint. 
\item For each eigenpair of $f_x(x_j,y_j,0)$, apply Krawczyk-type operator $K_{\mathbb{K}}$ to verifying conditions in Proposition \ref{prop-Krawczyk}.
We use the numerical right and left eigenpairs, $\{\lambda_i^j,u_i^j\}_{i=1}^n$ and $\{\lambda_i^j,p_i^j\}_{i=1}^n$, to define the matrix $C_{\mathbb{K}}$ in the definition of $K_{\mathbb{K}}$.
In the practical verifications involving $K_{\mathbb{K}}$, apply the bound of $A(y) \equiv f_x(h_\epsilon(y),y,\epsilon)$ given by
\begin{equation*}
\{f_x(x,y,\epsilon)\mid (x,y)\in T_jD_j, \epsilon\in [0,\epsilon_0]\}.
\end{equation*}
\item (Optional.) Verify the rate condition in Definition \ref{dfn-rate-fast-slow} by calculating the following numbers :
\begin{align*}
k'_{su, j} &:= \frac{\overrightarrow{\mu_{s,2}}(T_jD_j)}{\overrightarrow{\xi_{su,1}}(T_jD_j)},\quad 
k_{su, j} := \lfloor k'_{su, j} \rfloor -1,\quad k_{su} := \min_{j=1,\cdots, m_0}k_{su, j},\\ 
k'_{ss, j} &:= \frac{\overrightarrow{\mu_{ss,2}}(T_jD_j)}{\overrightarrow{\xi_{su,1}}(T_jD_j)},\quad
k_{ss, j} := \lfloor k'_{ss, j} \rfloor -1,\quad k_{ss} := \min_{j=1,\cdots, m_0}k_{ss, j},\\ 
k &:= \min \{k_{su}, k_{ss}\}.
\end{align*} 
\end{enumerate}
If some step fails, then consider the refinement of $Y=\bigcup_{j=1}^{m_0}Y_j$ and try again.
\end{alg}

\begin{thm}
\label{thm-bundle}
Assume that all steps in Algorithm \ref{alg-bundle} returns succeeded for all $j=1,\cdots, m_0$.
Then we obtain the following objects; for each $\epsilon \in [0,\epsilon_0]$,
\begin{enumerate}
\item a slow manifold $S_\epsilon = \{(x,y,\epsilon)\mid x = h_\epsilon(y), y\in Y\}$ for some Lipschitz function $x = h_\epsilon(y)$ defined on $Y$;
\item (right) eigenvectors $\{u_{\epsilon,i}(y)\mid y\in Y\}_{i=1}^n$ of $f_x(h_\epsilon(y),y,\epsilon)$.
\end{enumerate}
These validated objects make vector bundles $V^\alpha_\epsilon$, $\alpha = s,u$, over $S_\epsilon$.
Moreover, if further assume that Step 4 returns $k \geq 1$, then the validated slow manifold $S_{\epsilon}$ is $C^k$ in $Y$.
\end{thm}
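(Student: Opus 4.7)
The plan is to assemble the three ingredients built up through Sections 3 and 4 into a single patched object: the slow manifold as a global Lipschitz (resp.\ $C^k$) graph, and the eigenvector families as sections of vector bundles over it. Throughout, each local validation lives over a piece $Y_j$, so most of the work will be showing that the local pieces agree on overlaps $Y_j \cap Y_k$ and that each local trivialization is continuous.

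First I would establish $S_\epsilon$ on each $Y_j$. Step 1 produces a seed $T_j D_j$ that is, by definition, an affine fast-saddle-type block satisfying the cone conditions. Corollary \ref{cor-block-space} (via Theorem \ref{thm-smooth-slow-mfd} with $k = 0$) then yields a locally invariant slow manifold $S_\epsilon \cap T_j D_j$, represented as the graph of a Lipschitz function $h_\epsilon^j : Y_j \to \mathbb{R}^n$, for every $\epsilon \in [0,\epsilon_0]$. On an overlap $Y_j \cap Y_k$ the two graphs $h_\epsilon^j$ and $h_\epsilon^k$ both parametrize $W^u(S_\epsilon) \cap W^s(S_\epsilon)$ inside the intersection $T_j D_j \cap T_k D_k$, which is unique in either block; hence they agree, and we obtain a single Lipschitz $h_\epsilon$ on all of $Y$.

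Next I would validate the eigenvector families and promote them to bundles. Step 2 encloses the spectrum of $f_x$ on $T_j D_j$ by mutually disjoint Gershgorin discs, which by Lemma \ref{lem-nonsing} (applied for each fixed $(y,\epsilon)$) forces every $\lambda_i(y;\epsilon)$ to be simple and hence makes $J_\mathbb{K}$ invertible. Step 3 then verifies the hypotheses of Proposition \ref{prop-Krawczyk} over $Y_j \times [0,\epsilon_0]$ (treating $\epsilon$ as an additional parameter), producing a continuous family of eigenpairs $\{(\lambda_i(y;\epsilon), u_i(y;\epsilon))\}$ normalized by $|u_i| = 1$ and with sign pinned down by the assumption that some component of the initial interval $[u]_0$ avoids $0$. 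On overlaps, uniqueness of the simple eigenpair forces the two local families to agree, so the spans $V_\epsilon^s(p) = \Span\{u_i(y;\epsilon) : \re \lambda_i < 0\}$ and $V_\epsilon^u(p)$ are unambiguously defined. Continuity of the $u_i(y;\epsilon)$ on each $Y_j$ gives a local frame, hence local triviality, so $V_\epsilon^s$ and $V_\epsilon^u$ are genuine vector bundles over $S_\epsilon$ of ranks $n_s$ and $n_u$ respectively.

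Finally, the smoothness statement reduces Step 4 to Theorem \ref{thm-smooth-slow-mfd}. The integer $k$ is chosen so that, on every $Y_j$, the inequalities (\ref{rate-fss-3}) of Definition \ref{dfn-rate-fast-slow} hold for all $j' \in \{1,\dots,k\}$ (together with (\ref{rate-fss-1}), (\ref{rate-fss-2}), (\ref{rate-fss-4}), which are inherited from the cone conditions used to build the seed); this is exactly what the quotients $k'_{su,j}$ and $k'_{ss,j}$ certify. Theorem \ref{thm-smooth-slow-mfd} then yields that $h_\epsilon^j$ is $C^k$ on $Y_j$, and the global $h_\epsilon$ is $C^k$ on $Y$ by the uniqueness-on-overlaps already established. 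The main subtlety I expect is bookkeeping on overlaps: one must check that Step 1 can be arranged so that seeds on adjacent $Y_j$'s do enclose a common slow manifold (not two spurious branches separated by a fold), and that the Krawczyk initial boxes on adjacent pieces select the same sign of each $u_i$; both are guaranteed in practice by the disjointness of Gershgorin discs and by using the endpoint of one seed as the anchor point for the next.
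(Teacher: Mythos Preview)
Your proposal is correct and follows essentially the same route as the paper: local existence of $S_\epsilon$ via the seed construction, unique patching on overlaps, Gershgorin discs for simplicity, Proposition~\ref{prop-Krawczyk} for the continuous eigenpair families, and Theorem~\ref{thm-smooth-slow-mfd} for the $C^k$ upgrade from Step~4. One small nuance worth aligning with the paper: on overlaps $Y_i\cap Y_j$ the eigenvectors produced by two separate Krawczyk validations need only agree \emph{up to sign}, not literally; the paper sidesteps this by observing that the spans $V^u_\epsilon(p)$, $V^s_\epsilon(p)$ are what must match, and these are determined unambiguously by simplicity---so the bundle structure is well-defined even without your final ``anchor point'' consistency check.
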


\begin{proof}
Step 1 indicates the existence of slow manifolds with the graph representation in the seed.
Cone conditions guarantee that the validated pieces of slow manifolds are uniquely attached in the intersection of seeds (e.g., Lemma 4.9 in \cite{Mat2}).
As a consequence, we obtain the whole branch of slow manifolds $S_\epsilon = \{(x,y,\epsilon)\mid x = h_\epsilon(y), y\in Y\}$.
\par
Step 2 indicates that all eigenvalues of $f_x(h_\epsilon(y),y,\epsilon)$ are simple for each $(y,\epsilon)\in Y\times [0,\epsilon_0]$.
Therefore, assumptions in Lemma \ref{lem-nonsing} with respect to eigenvalues are satisfied and we can appropriately construct a nonsingular matrix $C_{\mathbb{K}}$ so that the Krawczyk-type operator $K_{\mathbb{K}}$ is well-defined.
\par
Step 3 guarantees the existence of eigenvectors by Proposition \ref{prop-Krawczyk}.
Obviously, validated enclosures contain eigenvectors $\{u_{\epsilon,i}(y)\}_{i=1}^n$ for each $(y,\epsilon)\in Y\times [0,\epsilon_0]$, which can be determined uniquely for each $(y,\epsilon)$.
Since $f_x(h_\epsilon(y),y,\epsilon)$ varies continuously on $(y,\epsilon)$, then so do $\{u_{\epsilon,i}(y)\}_{i=1}^n$, and these vectors on the seed constructs a product $\{(h_\epsilon(y),y), u_{\epsilon,i}(y)\}$ for each $i$, which is exactly a trivial vector bundle over $S_\epsilon\cap T_jD_j$ for each $j$.
The uniqueness statement of eigenpairs by the Krawczyk method and simpleness of eigenvalues show that, if $Y_i\cap Y_j \not = \emptyset$ with $i\not = j$, validated eigenpairs coincides, at least up to signatures, in $Y_i\cap Y_j$.
This fact implies that the spanning eigenspaces is uniquely determined in $Y_i\cup Y_j$.
As a consequence, the collections $V^u_\epsilon$ and $V^s_\epsilon$ given by
\begin{align*}
&V^u_\epsilon(h_\epsilon(y),y) := {\rm span}\{u_{\epsilon,1}(y),\cdots, u_{\epsilon,n_u}(y)\},\quad V^s_\epsilon(h_\epsilon(y),y) := {\rm span}\{u_{\epsilon,n_u+1}(y),\cdots, u_{\epsilon,n}(y)\}\\
&V^u_\epsilon := \bigcup_{p=(h_\epsilon(y),y)\in S_\epsilon}V^u_\epsilon(p),\quad V^s_\epsilon := \bigcup_{p=(h_\epsilon(y),y)\in S_\epsilon}V^s_\epsilon(p)
\end{align*}
determine vector bundles over $S_\epsilon$.
Here eigenvectors $\{u_{\epsilon,1}(y),\cdots, u_{\epsilon,n_u}(y)\}$ are associated with eigenvalues with positive real part and $\{u_{\epsilon, n_u+1}(y),\cdots, u_{\epsilon,n}(y)\}$ are associated with eigenvalues with negative real part.
\par
The final assertion directly follows from the fact that $S_\epsilon$ is the intersection of $C^{k_{ss}}$-manifold $W^s_{loc}(S_\epsilon)$ and $C^{k_{su}}$-manifold $W^u_{loc}(S_\epsilon)$.
\end{proof}

%
%
\subsection{Tubular neighborhoods of  slow manifolds with explicit radii}
In Section \ref{section-bundle-validate}, we have validated slow manifolds $S_\epsilon =\{(h_\epsilon(y),y) \mid y\in Y\}$ as well as eigenvectors of $f_x(h_\epsilon(y),y,\epsilon)$ on $S_\epsilon$.
These eigenvectors can be applied to constructing fast-saddle-type blocks, as discussed in Section \ref{section-block}.
Moreover, construction of blocks with positive numbers $\{\eta^\alpha\}_{\alpha = u,s}$ in (\ref{fast-block-2}) yields the block with explicit lower bounds of radii $\eta^u, \eta^s$ centered at $(h_\epsilon(y),y)$ for each $(y,\epsilon)$.
Furthermore, since $h_\epsilon(y)$ depends continuously (possibly smoothly) on $(y,\epsilon)$, we expect that we can validate slices of blocks at each $(y,\epsilon)$ so that they depend continuously on $(y,\epsilon)$.
The collection of such slices are our targeting tubular neighborhoods of $S_\epsilon$.
\par
\bigskip
Before providing the algorithm for constructing tubular neighborhoods of $S_\epsilon$, we prepare an auxiliary concept similar to seeds.
Fix $\epsilon_0 > 0$.
Let $\{\eta^\alpha\}_{\alpha=s,u}$ be given nonnegative numbers.
Assume that all steps in Algorithm  \ref{alg-bundle} with $j=j_0\in \{1,\cdots, m_0\}$ on $[0,\epsilon_0]$ are succeeded, and let $\tilde B_j$ be the validated seed on $Y_{j_0}$.
We define a {\em target} as a fast-saddle-type block constructed by the following steps.
\par
Fix $j_0\in \{1,\cdots, m_0\}$ and $\tilde B = \tilde B_{j_0}$. 
Let $\{[\lambda_i(\tilde B)]\}_{i=1}^n$ be a sequence of interval enclosures of eigenvalues of $f_x(x,y,\epsilon)$ on $\tilde B$; namely,
\begin{equation*}
[\lambda_i(\tilde B)] = \{\lambda \in \mathbb{C} \mid \lambda \text{ is the $i$-th eigenvalue of }f_x(x,y,\epsilon)\text{ for some }(x,y)\in \tilde B,\ \epsilon\in [0,\epsilon_0]\}.
\end{equation*}
We divide $\{[\lambda_i(\tilde B)]\}_{i=1}^n$ into two groups: $\{[\lambda^a_i(\tilde B)]\}_{i=1}^{n_u} \cup \{[\lambda^b_i(\tilde B)]\}_{i=1}^{n_s}$, where $[\lambda^a_i(\tilde B)]$ is the enclosure of the $i$-th eigenvalues with positive real part, and $[\lambda^b_i(\tilde B)]$ is the enclosure of the $i$-th eigenvalues with negative real part.
Also, let $[P(\tilde B)]$ be the interval enclosure of eigenmatrices associated with $\{[\lambda_i(\tilde B)]\}_{i=1}^n$ validated in Step 3 of Algorithm \ref{alg-bundle}.
For a given $ch$-set $E\times Y$ containing the seed $\tilde B$, we compute the following enclosures:
\begin{align*}
\left\{ [P(\tilde B)]^{-1} P^{j_0} F(x,y,\epsilon) \mid (x,y)\in [T(z,w)]\subset E\times Y, \epsilon \in [0,\epsilon_0]\right\}_{i,j_i} \subsetneq [\delta_{i,j_i}^-, \delta_{i,j_i}^+],&\\
\notag
i=1,2,\quad j_1 = 1,\cdots, n_u,\quad j_2 = 1,\cdots, n_s,
\end{align*}
where $[P(\tilde B)]^{-1} = \{P^{-1}\mid P \in [P(\tilde B)]\}$ and $P^{j_0}$ is the sample eigenmatrix computed in Step 1 of Algorithm \ref{alg-bundle}. 
The enclosure $[T(z,w)]$ is given by
\begin{equation*}
[T(z,w)] := \left\{ \left(Pz + \bar x - f_x(\bar x, \bar y)^{-1}f_y(\bar x, \bar y)w, w + \bar y\right) \mid P\in [P(\tilde B)]\right\},
\end{equation*}
where $(\bar x, \bar y)$ is a (numerical) equilibrium for (\ref{layer}), i.e., $f(\bar x,\bar y,0)\approx 0$, for constructing the seed $\tilde B$.
Then define the set $D_{j_0}\subset \mathbb{R}^{n+l}$ by the following: 
\begin{align*}
D_{j_0} &:= \prod_{i=1}^{n_u} [a_i^-, a_i^+]\times \prod_{i=1}^{n_s} [b_i^-, b_i^+]\times Y_{j_0},\\
[a_i^-, a_i^+] &:= \left[-\frac{\delta_{1,i}^+}{\inf [\lambda^a_i(\tilde B)]} - \eta^u,\ -\frac{\delta_{1,i}^-}{\inf [\lambda^a_i(\tilde B)]} + \eta^u \right],\\
[b_i^-, b_i^+] &:= \left[-\frac{\delta_{2,i}^-}{\sup [\lambda^b_i(\tilde B)]} - \eta^s,\ -\frac{\delta_{2,i}^+}{\sup [\lambda^b_i(\tilde B)]} + \eta^s \right].
\end{align*}
If we validate $[TD_{j_0}]\subset E\times Y_{j_0}$, then $D_{j_0}$ becomes a fast-saddle-type block by arguments in Section \ref{section-block}.

\begin{dfn}[Targets]\rm
Let $D_{j_0}$ be a fast-saddle-type block with $\{\eta^\alpha\}_{\alpha = u,s}$ satisfying $[TD_{j_0}]\subset B\times Y_{j_0}$.
We further assume that cone conditions are satisfied in $D_{j_0}$.
We then say $D_{j_0}$ a {\em target on $Y_{j_0}$ with radii $\{\eta^\alpha\}_{\alpha = u,s}$}.
\end{dfn}

Now we are ready to construct tubular neighborhoods of $S_\epsilon$.

\begin{alg}
\label{alg-nbh}
Let $Y\subset \mathbb{R}^l$ be a compact, contractible set and $\epsilon_0 > 0$ be a given positive number.
Divide $Y$ into small pieces of interval sets $Y=\bigcup_{j=1}^{m_0}Y_j$.
For each $j=1,\cdots, m_0$,
\begin{enumerate}
\item Run Steps 1 $\sim$ 3 in Algorithm \ref{alg-bundle}.
\item Construct a target $\mathcal{D}_j$ on $Y_j$ with the sequence of radii $\{\eta^\alpha\}_{\alpha=u,s}$ for $\epsilon\in [0,\epsilon_0]$ containing the seed $\tilde B_j$.
\item Verify
\begin{equation}
\label{incl-tub}
\eta^u > \diag (\pi_a ([P(\tilde B_j)]^{-1}P^j\tilde B_j))\quad \text{ and }\quad \eta^s > \diag (\pi_b  ([P(\tilde B_j)]^{-1}P^j\tilde B_j)).
\end{equation}
\item (Optional.) Verify rate conditions in Definition \ref{dfn-rate-fast-slow} by calculating the following numbers :
\begin{align*}
k'_{su, j} &:= \frac{\overrightarrow{\mu_{s,2}}(\mathcal{D}_j)}{\overrightarrow{\xi_{su,1}}(\mathcal{D}_j)},\quad 
k_{su, j} := \lfloor k'_{su, j} \rfloor -1,\quad k_{su} := \min_{j=1,\cdots, m_0}k_{su, j},\\ 
k'_{ss, j} &:= \frac{\overrightarrow{\mu_{ss,2}}(\mathcal{D}_j)}{\overrightarrow{\xi_{su,1}}(\mathcal{D}_j)},\quad
k_{ss, j} := \lfloor k'_{ss, j} \rfloor -1,\quad k_{ss} := \min_{j=1,\cdots, m_0}k_{ss, j},\\ 
k &:= \min \{k_{su}, k_{ss}\}.
\end{align*} 
\end{enumerate}
If all steps are succeeded, return {\tt true}.
\end{alg}

\begin{rem}[Geometric meaning of (\ref{incl-tub})]\rm
Note that the seed $\tilde B_j$ is constructed in the {\em fixed} coordinate via $T_j(z,w) = (P^jz + x_j - f_x(x_j, y_j)^{-1}f_y(x_j, y_j)w, w+y_j)$ in $Y_j$.
Inequalities (\ref{incl-tub}) estimate the location of seeds and the desiring slice $\mathcal{N}\mid_{\{(y,\epsilon) = (\bar y,\bar \epsilon)\}}$ in the coordinate {\em depending on $y\in Y_j$} via $T(z,y) = (h_\epsilon(y) + P(y)z, y)$.
\end{rem}

\begin{thm}
\label{thm-nbh}
Assume that Algorithm \ref{alg-nbh} returns {\tt true}.
Then, for each $\epsilon\in [0,\epsilon_0]$, the union of targets $\bigcup_{j=1}^{m_0} \mathcal{D}_j$ contains the fast-saddle-type block $\mathcal{N} = \mathcal{N}(S_\epsilon;\{\eta^\alpha\}_{\alpha=u,s})$ of the following form:
\begin{align*}
&\mathcal{N} = T_\epsilon\mathcal{D}_{(\eta^u, \eta^s)} = \{(h_\epsilon(y) + P_\epsilon(y)z, y)\mid z\in R(\eta^u, \eta^s), y\in Y\}\quad \text{ with }\\
&\mathcal{D}_{(\eta^u, \eta^s)} = R(\eta^u, \eta^s) \times Y,\quad R(\eta^u, \eta^s) = \prod_{j=1}^{n_u} [-\eta^u, \eta^u]\times \prod_{j=n_u+1}^{n} [-\eta^s, \eta^s].
\end{align*}
Here $P_\epsilon(y)$ denotes the $n\times n$ matrix whose $i$-th column is the eigenvector $u_{\epsilon,i}(y)$ associated with eigenvalues $\{\lambda_{\epsilon,i}(y)\}_{i=1}^n$ validated in Step 1.
The construction of $\mathcal{D}_{(\eta^u, \eta^s)}$ is considered in the following form of (\ref{fast-slow}):
\begin{align*}
a_i' &= \lambda_{\epsilon,i}^a(y) a_i  + F_{1,i}(x,y,\epsilon),\quad {\rm Re}\lambda_i > 0,\quad i=1,\cdots,  n_u,\\
b_i' &= \lambda_{\epsilon,i}^b(y) b_i  + F_{2,i}(x,y,\epsilon),\quad {\rm Re}\lambda_i < 0,\quad i=1,\cdots, n_s
\end{align*}
via the transformation $x = h_\epsilon(y) +P_\epsilon(y)(a,b)$, where $\{\lambda_{\epsilon,j}(y)\}_{j=1}^n$ is the family of eigenvalues validated in Step 1.
If we further validate Step 4 in Algorithm \ref{alg-nbh} for all $j=1,\cdots, m_0$, then we have $W^s(S_\epsilon)$ as a $C^{k_{ss}}$-manifold in $\mathcal{N}$, $W^u(S_\epsilon)$ as a $C^{k_{su}}$-manifold in $\mathcal{N}$ and $S_\epsilon$ as a $C^k$-manifold in $\mathcal{N}$.
\end{thm}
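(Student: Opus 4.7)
The plan is to leverage Theorem \ref{thm-bundle} to first harvest the slow manifold and its spectral data, then recognize $\mathcal{N}$ as a fast-saddle-type block in the $y$-dependent coordinates by a pointwise-in-$y$ application of the construction of Section \ref{section-block}, and finally use inequality (\ref{incl-tub}) as the precise geometric bridge that embeds $\mathcal{N}$ (built with the true, $y$-dependent eigenmatrix $P_\epsilon(y)$) into $\bigcup_j \mathcal{D}_j$ (built with the sample eigenmatrices $P^j$).

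First, Step 1 of Algorithm \ref{alg-nbh} reruns Steps 1--3 of Algorithm \ref{alg-bundle}, so Theorem \ref{thm-bundle} immediately supplies the graph representation $S_\epsilon = \{(h_\epsilon(y),y) : y \in Y\}$, a continuous family $\{\lambda_{\epsilon,i}(y), u_{\epsilon,i}(y)\}_{i=1}^n$ of eigenpairs of $f_x(h_\epsilon(y),y,\epsilon)$, and the associated eigenmatrix $P_\epsilon(y) = [u_{\epsilon,1}(y),\ldots, u_{\epsilon,n}(y)]$. Applying the $y$-pointwise affine transformation $x = h_\epsilon(y) + P_\epsilon(y)(a,b)$ and repeating the diagonalization calculation of (\ref{diag-fast}) with $(\bar x,\bar y)$ replaced by $(h_\epsilon(y),y)$ yields exactly the block-diagonal fast system stated in the theorem, because $f(h_\epsilon(y),y,\epsilon) = O(\epsilon)$ is absorbed into the slow direction and the $y$-derivative terms are absorbed into $F_{1,i}, F_{2,i}$. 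In these coordinates the rectangular set $\mathcal{D}_{(\eta^u,\eta^s)} = R(\eta^u,\eta^s)\times Y$ has the form (\ref{fast-block-2}) with the unperturbed intervals centered at $0$ (since $S_\epsilon$ is the zero section here), so the sign condition $\re \lambda^{a/b}_{\epsilon,i}(y) \gtrless 0$ and the bounds on $F_{1,i}, F_{2,i}$ inherited from each $\mathcal{D}_j$ give the transversality of the fast vector field on the fast boundary, making $\mathcal{N} = T_\epsilon \mathcal{D}_{(\eta^u,\eta^s)}$ an affine fast-saddle-type block.

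The principal step is the inclusion $\mathcal{N} \subset \bigcup_j \mathcal{D}_j$. Fix $j$ and a point $(x,y) = (h_\epsilon(y) + P_\epsilon(y)z, y) \in \mathcal{N}$ with $z \in R(\eta^u,\eta^s)$ and $y \in Y_j$. Writing this point in the fixed coordinates of $\mathcal{D}_j$ as $T_j(\tilde z, y - y_j)$ gives
\begin{equation*}
\tilde z = (P^j)^{-1}\bigl[h_\epsilon(y) - x_j + f_x(x_j,y_j)^{-1} f_y(x_j,y_j)(y - y_j)\bigr] + (P^j)^{-1} P_\epsilon(y) z.
\end{equation*}
The first bracket evaluated at $z=0$ is the $\tilde z$-coordinate of $S_\epsilon \cap \pi_y^{-1}(Y_j) \subset \tilde B_j$, so it lies inside the unperturbed fast box $\prod [-\delta_{1,i}^+/\inf[\lambda_i^a],\,-\delta_{1,i}^-/\inf[\lambda_i^a]] \times \prod[\cdots]$ that underlies $\mathcal{D}_j$. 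The second term is the representational mismatch between $P^j$ and $P_\epsilon(y)$; its rigorous interval enclosure is bounded component-wise by $\diag(\pi_a([P(\tilde B_j)]^{-1}P^j \tilde B_j))$ on the fast-unstable side and analogously on the fast-stable side. Inequality (\ref{incl-tub}) is precisely the statement that these mismatches are absorbed by the extensions $\eta^u, \eta^s$ used in building $\mathcal{D}_j$, so $\tilde z \in \prod [a_i^-, a_i^+]\times \prod[b_i^-, b_i^+]$ and hence $(x,y)\in \mathcal{D}_j$.

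For the smoothness assertion, Step 4 of Algorithm \ref{alg-nbh} verifies the rate condition of order $k$ on each $\mathcal{D}_j$; since $\mathcal{N} \cap \pi_y^{-1}(Y_j) \subset \mathcal{D}_j$ and each $\mathcal{D}_j$ already satisfies cone conditions (from Step 3 via Algorithm \ref{alg-bundle}), Theorem \ref{thm-smooth-slow-mfd} applies on each piece to give $W^s(S_\epsilon)\in C^{k_{ss}}$ and $W^u(S_\epsilon)\in C^{k_{su}}$ inside $\mathcal{D}_j$; the transversality of their intersection, together with the uniqueness part of Theorem \ref{thm-bundle} across overlapping $Y_i \cap Y_j$, glues the local pieces into global $C^{k_{ss}}, C^{k_{su}}$ manifolds in $\mathcal{N}$ whose intersection $S_\epsilon$ is then $C^k$. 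The main obstacle I anticipate is the bookkeeping of the change-of-frame estimate in the middle step: one must verify that the interval enclosure $[P(\tilde B_j)]$ produced by the Krawczyk iteration is tight enough that (\ref{incl-tub}) really does dominate the sum of the on-manifold fluctuation and the $P^j$-vs-$P_\epsilon(y)$ drift, and that this estimate remains uniform in $\epsilon \in [0,\epsilon_0]$.
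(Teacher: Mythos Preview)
Your overall architecture (harvest $S_\epsilon$ and the spectral data from Theorem \ref{thm-bundle}, then establish both the inclusion and the fast-saddle-type property) matches the paper's, but you have swapped the roles of the two main ingredients, and in doing so you misapply (\ref{incl-tub}).

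In the paper, the inclusion $\mathcal{N}\subset\bigcup_j\mathcal{D}_j$ comes essentially for free from the interval-enclosure construction of targets: since $P_\epsilon(y)\in[P(\tilde B_j)]$, $\lambda_{\epsilon,i}(y)\in[\lambda_i(\tilde B_j)]$ and the pointwise error bounds satisfy $[\delta^-_{i,j_i}(y),\delta^+_{i,j_i}(y)]\subset[\delta^-_{i,j_i},\delta^+_{i,j_i}]$, the $y$-pointwise block $\mathcal{D}_j(y;\epsilon)$ built with the true data sits inside the target $\mathcal{D}_j$ built with the enclosures, and $R(\eta^u,\eta^s)\subset \mathcal{D}_j(y;\epsilon)$ by construction. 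No change-of-frame estimate is needed here. By contrast, you try to prove inclusion by pulling points back through $T_j$ and invoking (\ref{incl-tub}); but targets are \emph{not} expressed in the fixed $P^j$-coordinates of $T_j$ --- they are built via the interval transformation $[T(z,w)]$ ranging over all $P\in[P(\tilde B_j)]$ --- so your expression $\tilde z=(P^j)^{-1}[\ldots]+(P^j)^{-1}P_\epsilon(y)z$ is not a coordinate in $\mathcal{D}_j$. Moreover, (\ref{incl-tub}) bounds the diameter of the \emph{seed} $\tilde B_j$ re-expressed in the $[P(\tilde B_j)]$-coordinates; it says nothing about $(P^j)^{-1}P_\epsilon(y)z$ for $z\in R(\eta^u,\eta^s)$, which is the quantity your argument needs.

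Conversely, the paper uses (\ref{incl-tub}) precisely at the place where you wave your hands: showing that $\mathcal{N}$ is a fast-saddle-type block. Your claim that transversality on $\partial\mathcal{N}$ follows from ``bounds on $F_{1,i},F_{2,i}$ inherited from each $\mathcal{D}_j$'' is not justified, because those bounds are computed in the interval coordinate frame and do not directly control the remainder in the exact $T_\epsilon$-coordinates. The paper's mechanism is: cone conditions on both seeds and targets force $S_\epsilon\subset\tilde B_j$; inequality (\ref{incl-tub}) then says the seed slice $\tilde B_j|_{(y,\epsilon)}$, viewed in the $T_\epsilon$-coordinates (legitimate since $P_\epsilon(y)\in[P(\tilde B_j)]$), has fast-diameter strictly less than $(\eta^u,\eta^s)$ and therefore lies in the interior of $\mathcal{N}|_{(y,\epsilon)}=R(\eta^u,\eta^s)$. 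Consequently $\partial\mathcal{N}$ lives entirely in the outer shell $\hat D_c\setminus D_c$ of the extended-block construction (\ref{fast-block-2}), and the sign inequalities listed after (\ref{fast-block-2}) then give the exit/entrance dichotomy on $\partial\mathcal{N}$. That is the step where (\ref{incl-tub}) actually does its work.
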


\begin{proof}
By the definition, for all $y\in Y_j$, a target $\mathcal{D}_j$ on $Y_j$ contains the slice $T_\epsilon \mathcal{D}_j(y;\epsilon)$.
The block $\mathcal{D}_j(y;\epsilon)$ is given by
\begin{align*}
\mathcal{D}_j(y;\epsilon)&:= \prod_{i=1}^{n_u} [a_{\epsilon,i}^-(y), a_{\epsilon,i}^+(y)]\times \prod_{i=1}^{n_s} [b_{\epsilon,i}^-(y), b_{\epsilon,i}^+(y)]\times Y_j,\\
[a_{\epsilon,i}^-(y), a_{\epsilon,i}^+(y)] &:= \left[-\frac{\delta_{1,i}^+(y)}{\lambda^a_{\epsilon,i}(y)} - \eta^u,\ -\frac{\delta_{1,i}^-(y)}{\lambda^a_{\epsilon,i}(y)} + \eta^u \right],\\
[b_{\epsilon,i}^-(y), b_{\epsilon,i}^+(y)] &:= \left[-\frac{\delta_{2,i}^-(y)}{\lambda^b_{\epsilon,i}(y)} - \eta^s,\ -\frac{\delta_{2,i}^+(y)}{\lambda^b_{\epsilon,i}(y)} + \eta^s \right].
\end{align*}

Remark that, by our construction, $[\delta_{i,j_i}^-(y), \delta_{i,j_i}^+(y)]\subset [\delta_{i,j_i}^-, \delta_{i,j_i}^+]$ holds for each $y\in Y_j$.
Obviously, every slice $\mathcal{N}\mid_{\{(y,\epsilon) = (\bar y,\bar \epsilon)\}}$ with $\bar y\in Y_j$ is contained in the slice $T_\epsilon \mathcal{D}_j(\bar y; \bar \epsilon)$.
The rest is to prove that $\mathcal{N}$ is of fast-saddle-type.
Note that cone conditions for both seeds and targets indicate that the validated slow manifold in targets is actually contained in seeds. 
This fact and inequalities (\ref{incl-tub}) for all $j$ imply that the slice  of seeds $\tilde B_j\mid_{\{(y,\epsilon)=(\bar y,\bar \epsilon)\}}$ is contained in the interior of the slice $\mathcal{N}\mid_{\{(y,\epsilon) = (\bar y,\bar \epsilon)\}}$ and that $\partial \tilde B_j\mid_{\{(\bar y,\bar \epsilon)\}} \cap \partial \mathcal{N}\mid_{\{(\bar y,\bar \epsilon)\}}=\emptyset$.
From the construction of blocks in (\ref{fast-block-2}), we then know that all points on the boundary of $\mathcal{D}_{(\eta^u, \eta^s)}$ for all $(y,\epsilon)$ are either exit or entrance points, which shows that $\mathcal{N}$ is a fast-saddle-type block.
\end{proof}

Our construction naturally gives the definition of  the {\em fast-exit} $\mathcal{N}^{f,-}$ and the {\em fast-entrance} $\mathcal{N}^{f,+}$ of tubular neighborhood $\mathcal{N}$ by
\begin{align*}
\mathcal{N}^{f,-} &= T_\epsilon \mathcal{D}_{\eta^u,\eta^s}^{f,-},\quad \mathcal{N}^{f,+} =  T_\epsilon \mathcal{D}_{\eta^u,\eta^s}^{f,+},\\
\mathcal{D}_{\eta^u,\eta^s}^{f,-} &= R(\eta^u,\eta^s)^{f,-}\times Y\\
	&\equiv \bigcup_{J=1}^{n_u}\left(\prod_{j=1}^{J-1} [-\eta^u, \eta^u]\times \{\pm \eta^u\}\times \prod_{j=J+1}^{n_u} [-\eta^u, \eta^u]\times \prod_{j=n_u+1}^{n} [-\eta^s, \eta^s]\right)\times Y,\\
\mathcal{D}_{\eta^u,\eta^s}^{f,+} &= R(\eta^u,\eta^s)^{f,+}\times Y\\
	&\equiv \bigcup_{J=n_u+1}^{n}\left(\prod_{j=1}^{n_u} [-\eta^s, \eta^s] \times \prod_{j=1}^{J-1} [-\eta^s, \eta^s]\times \{\pm \eta^s\}\times \prod_{j=J+1}^{n_s} [-\eta^s, \eta^s]\right)\times Y.
\end{align*}
\par
\bigskip
The proof of Theorem \ref{thm-nbh} induces an important property of tubular neighborhoods, which is just a case of Theorem \ref{thm-nbh} with $m_0 = 2$.
\begin{cor}[Unique continuation of tubular neighborhoods]
\label{cor-extend}
Let $Y_1, Y_2\subset \mathbb{R}^l$ be compact contractible sets such that $Y_1\cap Y_2\not = \emptyset$ is also contractible.
Also let $\mathcal{D}_i\ (i=1,2)$ be targets on $Y_i$ with common radii $\{\eta^\alpha\}_{\alpha = u,s}$ validated for all $\epsilon \in [0,\epsilon_0]$.
Then, for any $\epsilon \in [0,\epsilon_0]$, the union $\mathcal{D}_1\cup \mathcal{D}_2$ contains the tubular neighborhood $\mathcal{N}$ on $Y= Y_1\cup Y_2$.
\end{cor}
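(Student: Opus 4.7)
The plan is to recognize this corollary as a direct instantiation of Theorem~\ref{thm-nbh} with $m_0 = 2$ and the cover $Y = Y_1 \cup Y_2$, and to verify that the hypotheses of that theorem are satisfied by the data supplied in the corollary. Since each $\mathcal{D}_i$ is already a target on $Y_i$ with common radii $\{\eta^\alpha\}_{\alpha=u,s}$, the existence of a valid seed $\tilde B_i \subset \mathcal{D}_i$ together with the verifications in Steps 1--3 of Algorithm~\ref{alg-nbh} are built into the definition of a target. Thus, running Algorithm~\ref{alg-nbh} with $j\in\{1,2\}$ and the prescribed $\mathcal{D}_j$ returns {\tt true}, and Theorem~\ref{thm-nbh} produces a candidate $\mathcal{N}$ on $Y$ contained in $\mathcal{D}_1 \cup \mathcal{D}_2$.

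The subtle point, and the main obstacle, concerns the consistency of the local coordinate systems used in constructing $\mathcal{N}$. On each $Y_j$, Theorem~\ref{thm-bundle} gives continuous eigenvector families $\{u_{\epsilon,i}(y)\}$ of $f_x(h_\epsilon(y),y,\epsilon)$ along the validated slow manifold $S_\epsilon\cap(Y_j\times\{\epsilon\})$, and the columns of these form the matrix $P_\epsilon(y)$. On the overlap $Y_1 \cap Y_2$, the two seeds $\tilde B_1, \tilde B_2$ produce \emph{a priori} different Krawczyk enclosures for the same eigenpair. I would invoke Step~2 of Algorithm~\ref{alg-bundle} (Gershgorin enclosures are disjoint) to conclude that each eigenvalue is simple throughout $\mathcal{D}_1\cap \mathcal{D}_2$, so by the uniqueness clause of Proposition~\ref{prop-Krawczyk} the two enclosures must agree on the validated eigenvalue and pin down the same eigenvector up to sign. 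Because the sign is fixed inside each seed by the initial choice in Step~1 of Algorithm~\ref{alg-bundle} and each eigenvector varies continuously on the contractible set $Y_1 \cap Y_2$, the sign choices propagate consistently across the overlap, which is precisely why contractibility of $Y_1 \cap Y_2$ is required.

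Once the columns of $P_\epsilon(y)$ are unambiguously defined on $Y$, the slow manifold $S_\epsilon$ likewise glues uniquely by the overlap argument used in Theorem~\ref{thm-bundle} (cf.\ Lemma~4.9 of \cite{Mat2}, which is invoked there). Hence the global fast-saddle-type block
\[
\mathcal{N} = \{(h_\epsilon(y)+P_\epsilon(y)z,\,y) \mid z \in R(\eta^u,\eta^s),\ y \in Y\}
\]
is well defined on $Y$. Finally I would show the inclusion $\mathcal{N}\subset \mathcal{D}_1\cup\mathcal{D}_2$ slice-by-slice: for $\bar y \in Y_j$, the slice $\mathcal{N}\mid_{(\bar y,\bar\epsilon)}$ is, by definition, the image under $T_\epsilon$ of the rectangle $R(\eta^u,\eta^s)$ centered on the slow manifold, and the argument in the proof of Theorem~\ref{thm-nbh} (using that every target satisfies $T[\mathcal{D}_j(y;\epsilon)]\subset E\times Y_j$ and the inclusion (\ref{incl-tub}) guaranteed by the target structure) shows that this slice lies in $\mathcal{D}_j$. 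Taking the union over $j$ completes the proof.
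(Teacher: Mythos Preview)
Your proposal is correct and follows exactly the paper's approach: the paper's entire proof is the one-line observation preceding the corollary that this ``is just a case of Theorem~\ref{thm-nbh} with $m_0 = 2$.'' You have simply unpacked that observation, spelling out the overlap consistency of eigenpairs and slow manifolds (which the paper defers to Theorem~\ref{thm-bundle} and its proof) and the slice-wise inclusion argument already contained in the proof of Theorem~\ref{thm-nbh}.
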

This corollary indicates that we can extend tubular neighborhoods of slow manifolds {\em in arbitrary range of slow variables} as long as assumptions of Corollary \ref{cor-extend} are satisfied.

\subsection{Conic and star-shaped neighborhoods of slow manifolds}

Our validations of tubular neighborhoods are easily extended to validation of cones.
Recall from Section \ref{section-inv-mfd} that (un)stable cones with the vertex $(z_0, \eta_0)=((a_0,b_0,y_0),\eta_0)\in \mathbb{R}^{n+l+1}$ are described by
\begin{align*}
C_M^u(z_0, \eta_0) &:= \{(a,b,y,\eta)\mid |a - a_0|^2 \geq  M^2(|b - b_0|^2 + |y - y_0|^2 + |\eta - \eta_0|^2)\},\\
C_M^s(z_0, \eta_0) &:= \{(a,b,y,\eta)\mid |b - b_0|^2 \geq  M^2(|a - a_0|^2 + |y - y_0|^2 + |\eta - \eta_0|^2)\},
\end{align*}
which is considered in the coordinate $(a,b,y)$ through, say, the transformation
\begin{equation*}
(x,y) = (h_\epsilon(y) + P_\epsilon(y)(a,b),y),\quad P_\epsilon(y)^{-1}f_x(h_\epsilon(y), y,\epsilon)P_\epsilon(y) = {\rm diag}(\lambda_{\epsilon,1}(y),\cdots, \lambda_{\epsilon,n}(y))
\end{equation*}
to obtain (\ref{abstract-form}).
Like the slow manifold $S_\epsilon$, the above representation of cones depends continuously on $y$ in general.
We can then apply the continuous family of eigenpairs $\{(\lambda_{\epsilon,i}(y); u_{\epsilon,i}(y))\}_{i=1}^n$ to validating cones which depend continuously on $y$, which yields cone-like neighborhoods of slow manifolds as follows.

\begin{dfn}[Conic neighborhoods]\rm
\label{dfn-conic-nbh}
Let $S_\epsilon = \{x=h_\epsilon(y)\mid y\in Y\}$ be a normally hyperbolic slow manifold with the graph representation on an interval set $Y\subset \mathbb{R}^l$.
Let also $\mathcal{N} = \{(h_\epsilon(y) + x, y) \mid x\in P_\epsilon(y)R(\eta^u, \eta^s),\ y\in Y\}$ be a tubular neighborhood of $S_\epsilon$ for some $\eta^u, \eta^s > 0$, where $P(y):\mathbb{R}^n\to \mathbb{R}^n$ is a homeomorphism.
Let $M^u, M^s \geq 1$ and $l^u, l^s \geq 0$.
\par
\bigskip
Define the set $\mathcal{C}^u_{M^u,l^u}(\eta^u,\eta^s) := T_\epsilon (C^u_{M^u,l^u}(\eta^u, \eta^s)\times Y)$, 
where 
\begin{align*}
C^u_{M^u,l^u}&(\eta^u, \eta^s) = \left[ R(\eta^u, \eta^s)\cup \{(a,b)\mid |a-a_0|\geq M^u |b-b_0|, \right.\\
	 &\quad \quad (a_0,b_0)\in R(\eta^u, \eta^s)^{f,-},\ (a-a_0,b-b_0)\cdot \nu(a_0,b_0)>0\} \left. \right]\cap R\left(\eta^u+l^u, \eta^s+ \frac{l^u}{M^u}\right)
\end{align*}
and $\nu(a_0,b_0)$ is the outer unit normal vector of $R(\eta^u, \eta^s)$ at $(a_0,b_0)\in \partial R(\eta^u, \eta^s)$. 
\par
Similarly, define
$\mathcal{C}^s_{M^s,l^s}(\eta^u,\eta^s) := T_\epsilon(C^s_{M^s,l^s}(\eta^u, \eta^s)\times Y)$, 
where 
\begin{align*}
C^s_{M^s,l^s}&(\eta^u, \eta^s) = \left[ R(\eta^u, \eta^s)\cup \{(a,b)\mid |b-b_0| \geq M^s |a-a_0|, \right. \\
	&\quad \quad (a_0,b_0)\in R(\eta^u, \eta^s)^{f,+},\ (a-a_0,b-b_0)\cdot \nu(a_0,b_0)>0\} \left. \right]\cap R\left(\eta^u+\frac{l^s}{M^s}, \eta^s+l^s \right).
\end{align*}
We say the set $\mathcal{C}^u\equiv \mathcal{C}^u_{M^u,l^u}(\eta^u,\eta^s)$ (resp. $\mathcal{C}^s \equiv \mathcal{C}^s_{M^s,l^s}(\eta^u,\eta^s)$) the {\em unstable (resp. stable) conic neighborhood of $S_\epsilon$} if the unstable $M^u$-cone condition (resp. the stable $M^s$-cone condition) is satisfied in $\mathcal{C}^u_{M^u,l^u}(\eta^u,\eta^s)$ (resp. $\mathcal{C}^s_{M^s,l^s}(\eta^u,\eta^s)$).
See Figure \ref{fig-pic_tub}-(b).
\par
\bigskip
Finally, we call the set $\mathcal{S} := \mathcal{C}^u \cup \mathcal{C}^s$ a {\em star-shaped neighborhood} of $S_\epsilon$.
See Figure \ref{fig-pic_tub}-(c).
\end{dfn}

\par
\bigskip
Conic neighborhoods consist of tubular neighborhoods and points inside corresponding cones whose vertices are on the boundary $\mathcal{N}^{f,\pm}$.
Properties of cones and the definition of $\mathcal{C}^\alpha,\ \alpha = u,s$, immediately yield the following properties.

\begin{thm}
Let $\mathcal{C}^u$ and $\mathcal{C}^s$ be unstable and stable conic neighborhoods of a slow manifold $S_\epsilon = \{x=h_\epsilon(y)\}$.
Then both $\mathcal{C}^u$ and $\mathcal{C}^s$ are homeomorphic to $\mathcal{N}$.
The fast-exits of $\mathcal{C}^u$ and $\mathcal{C}^s$ are given as follows, respectively (compare with $\mathcal{N}^{f,-}$) : 
\begin{align*}
(\mathcal{C}^u)^{f,-} &= T_\epsilon \left((C^u)^{f,-}\times Y\right),\\
(C^u)^{f,-} &= R\left(\eta^u + l^u,\eta^s + \frac{l^u}{M^u}\right)^{f,-},\\
(\mathcal{C}^s)^{f,-} &= T_\epsilon \left((C^s)^{f,-}\times Y\right),\\
(C^s)^{f,-} &= R\left(\eta^u,\eta^s\right)^{f,-} \cup \{(a,b)\in C^s \mid |b-b_0| = M^s |a-a_0|,\ (a_0,b_0)\in R(\eta^u,\eta^s)^{f,-}\cap \overline{R(\eta^u,\eta^s)^{f,+}} \},
\end{align*}
where $C^u = C^u_{M^u,l^u}(\eta^u, \eta^s)$ and $C^s = C^s_{M^s,l^s}(\eta^u, \eta^s)$.
Similarly, the fast-entrances of $\mathcal{C}^u$ and $\mathcal{C}^s$ are given as follows, respectively (compare with $\mathcal{N}^{f,+}$) : 
\begin{align*}
(\mathcal{C}^u)^{f,+} &= T_\epsilon \left((C^u)^{f,+}\times Y\right) \equiv T_\epsilon \left((C^u_{M^u,l^u}(\eta^u, \eta^s))^{f,+} \times Y\right),\\
(C^u)^{f,+} &= R\left(\eta^u,\eta^s\right)^{f,+} \cup \{(a,b)\in C^u \mid |a-a_0| = M^u |b-b_0|\} \setminus (C^u)^{f,-},\\
(\mathcal{C}^s)^{f,+} &= T_\epsilon \left((C^s)^{f,+}\times Y\right) \equiv T_\epsilon \left((C^s_{M^s,l^s}(\eta^u, \eta^s))^{f,+} \times Y\right),\\
(C^s)^{f,+} &= R\left(\eta^u + \frac{l^s}{M^s},\eta^s + l^s\right)^{f,+} \setminus (C^s)^{f,-}.
\end{align*}
\end{thm}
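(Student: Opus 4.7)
For the first claim it suffices to exhibit fiber-preserving homeomorphisms $R(\eta^u,\eta^s)\to C^u_{M^u,l^u}(\eta^u,\eta^s)$ and $R(\eta^u,\eta^s)\to C^s_{M^s,l^s}(\eta^u,\eta^s)$, since $T_\epsilon$ and the product with $Y$ are both homeomorphisms. The rectangle $R(\eta^u,\eta^s)$ is convex, and each $C^\alpha$ is the intersection of a larger rectangle with the union of the original rectangle and conical extensions glued along the relevant fast face; a direct check shows that all three sets are star-shaped with respect to the origin. Writing $\lambda(\theta)$ and $\rho(\theta)$ for the radial distance from the origin to the boundaries of $R(\eta^u,\eta^s)$ and of $C^\alpha$ in the direction $\theta\in S^{n-1}$, the radial rescaling $p\mapsto (\rho(p/|p|)/\lambda(p/|p|))\,p$ is a continuous bijection (with continuous inverse) that is independent of $y$, hence extends fiberwise over $Y$.

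Next I classify the boundary pieces of $\mathcal{C}^u$. The boundary $\partial C^u$ partitions essentially disjointly into: (i) the outer $a$-faces $|a_i|=\eta^u+l^u$, (ii) those portions of the original $b$-faces $|b_k|=\eta^s$ that are not covered by a cone extension, and (iii) the lateral conic surfaces $\{|a-a_0|=M^u|b-b_0|\}$ with $(a_0,b_0)\in R(\eta^u,\eta^s)^{f,-}$. For (i), the argument from (\ref{fast-block-2}) applied with the extended radius $\eta^u+l^u$ in place of $\eta^u$ gives $a_i'>0$ at $a_i=\eta^u+l^u$ and $a_i'<0$ at $a_i=-\eta^u-l^u$, so these are exit. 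For (ii), the stable dynamics gives $b_k'<0$ at $b_k=\eta^s$ exactly as for $\mathcal{N}$, so these are entrance. For (iii), I use the unstable $M^u$-cone condition of Proposition \ref{prop-unst-m-cone} by taking the vertex $p_0=(a_0,b_0,y)$ as reference: a point $p$ on the conic surface has $Q^u_{M^u}(p-p_0)=0$, and the monotonicity $(Q^u_{M^u})'>0$ forces the difference strictly into the cone in forward time, while backward time takes it out of the cone. A local check in diagonal coordinates then shows the forward trajectory remains inside $C^u$ and the backward trajectory exits $C^u$, so (iii) is entrance. Assembling these three pieces yields the claimed formulas for $(\mathcal{C}^u)^{f,\pm}$.

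For $\mathcal{C}^s$ the decomposition is symmetric but with the roles of $a$ and $b$ swapped. The original $a$-faces $|a_i|=\eta^u$ are not covered by any cone extension (cones are attached along $R(\eta^u,\eta^s)^{f,+}$, not $R(\eta^u,\eta^s)^{f,-}$), and the same argument as for $\mathcal{N}$ yields exit there. The outer $b$-faces $|b_k|=\eta^s+l^s$ are entrance by the stable dynamics. The delicate piece is the lateral conic surface: the envelope of the union $\bigcup\{|b-b_0|\ge M^s|a-a_0|\}$ over all vertices $(a_0,b_0)\in R(\eta^u,\eta^s)^{f,+}$ is realized only when $(a_0,b_0)$ lies at a corner, i.e., in $R(\eta^u,\eta^s)^{f,-}\cap\overline{R(\eta^u,\eta^s)^{f,+}}$, which is precisely the index set appearing in the theorem. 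On such a conic surface a direct check, using the stable $M^s$-cone condition of Proposition \ref{prop-st-m-cone} in reversed time (equivalently, a linear calculation against the outward normal in diagonal coordinates), shows that the unstable $a$-flow and the stable $b$-flow both push the trajectory out of $C^s$ in forward time, yielding exit.

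The principal obstacle is identifying, in higher dimensions, which cone-surface points actually lie on the topological boundary of $C^u$ and $C^s$; this is handled by the distance-function description $b-\eta^s\ge M^s\,\mathrm{dist}(a,[-\eta^u,\eta^u]^{n_u})$ (and analogously for $C^u$), which makes the corner-vertex observation transparent and shows that non-corner vertices contribute only to the interior. Once the boundary decomposition is fixed, the formulas for $(C^u)^{f,\pm}$ and $(C^s)^{f,\pm}$ follow by assembling the three local analyses together with the relation $\partial B=B^{\exit}\cup B^{\ent}$ from Definition \ref{dfn-isolation}.
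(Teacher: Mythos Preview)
Your boundary decomposition of $\partial C^u$ into the three pieces, the treatment of the lateral conic faces via Proposition~\ref{prop-unst-m-cone}, and the reduction to the fast section through $T_\epsilon$ all match the paper's route; your radial star-shaped homeomorphism is simply an explicit version of what the paper dismisses as ``immediate from the definition''. The identification of the relevant cone vertices for $C^s$ with the corner set $R(\eta^u,\eta^s)^{f,-}\cap\overline{R(\eta^u,\eta^s)^{f,+}}$ is also the paper's observation.

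There is, however, a genuine gap in your step (i) for the outer $a$-faces $|a_i|=\eta^u+l^u$ of $C^u$ (and symmetrically for the outer $b$-faces of $C^s$). You invoke the block construction~(\ref{fast-block-2}), but the theorem's hypothesis is only that $\mathcal{C}^u$ is a conic neighborhood, meaning by Definition~\ref{dfn-conic-nbh} that the unstable $M^u$-cone condition holds on $\mathcal{C}^u$; there is no assumption that the error enclosures~(\ref{error-bounds-fast}) are valid on a rectangle $R(\eta^u+l^u,\eta^s+l^u/M^u)$ containing $C^u$, so the sign of $a_i'$ at $a_i=\eta^u+l^u$ is not available from a block estimate. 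The paper's argument avoids this entirely: for $(a,b)$ on the outer $a$-face, choose $(a_0,b_0)\in R(\eta^u,\eta^s)^{f,-}$ with $(a,b)$ in the closure of the unstable $M^u$-cone over $(a_0,b_0)$ (such a vertex exists by the construction of $C^u$). Since $(a_0,b_0)\in\mathcal{N}^{f,-}$, its forward orbit satisfies $\pi_a\varphi(t,(a_0,b_0,y,\epsilon))>\eta^u$ for small $t>0$; combined with the expansion $\tfrac{d}{dt}|a-a_0|^2>0$ supplied by the unstable cone condition (Proposition~\ref{prop-unst-m-cone}), this forces $\pi_a\varphi(t,(a,b,y,\epsilon))>\eta^u+l^u$, i.e.\ the face is an exit. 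Replacing your appeal to~(\ref{fast-block-2}) with this comparison argument repairs the gap; the analogous fix for the outer $b$-faces of $C^s$ uses Proposition~\ref{prop-st-m-cone} together with the entrance property of $\mathcal{N}^{f,+}$ in reversed time.
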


\begin{proof}
We give a proof only for $\mathcal{C}^u$.
The case $\mathcal{C}^s$ is similar.
\par
The first assertion immediately follows from the definition of $\mathcal{C}^u$ and $\mathcal{N}$.
Note that it is sufficient to consider the structure of $(C^u)^{f,\pm}$ since $Y$ does not affect the fast-boundary and the homeomorphism $T_\epsilon$ preserves the exit-entrance information of boundary.
\par
We know that
\begin{align*}
\partial C^u &\subset R(\eta^u,\eta^s)^{f,+}\\
	&\cup \{(a,b)\in C^u\mid  |a-a_0|= M^u |b-b_0|,\ (a_0,b_0)\in R(\eta^u,\eta^s)^{f,-}\cap \overline{R(\eta^u,\eta^s)^{f,+}} \}\\
	&\cup \left\{(a,b)\in C^u\mid a = \pm \left(\eta^u + l^u\right)\right\}.
\end{align*}
It is thus sufficient to study the dynamics on the right-hand side.
The first set $R(\eta^u,\eta^s)^{f,+}$ corresponds to a part of fast-entrance, and hence to a part of $(C^u)^{f,+}$.
Consider the second set $\{(a,b)\in C^u\mid  |a-a_0|= M^u |b-b_0|,\ (a_0,b_0)\in R(\eta^u,\eta^s)^{f,-}\cap \overline{R(\eta^u,\eta^s)^{f,+}} \}$.
The unstable cone condition indicates that the flow intersects the set transversely so that points enter $C^u$.
Namely, the second set is the fast-entrance of $C^u$, which is regarded as the part of $(C^u)^{f,+}$.
Move to the final set $\left\{(a,b)\in C^u\mid a = \pm \left(\eta^u + l^u\right)\right\}$.
We only consider the case $a = \eta^u + l^u$.
Another part follows from mirror arguments.
Notice that any points $(a,b)$ on the set is included in the closure of the unstable $M^u$-cone centered at a point $(a_0,b_0)$ on $R(\eta^u,\eta^s)^{f,-}$.
The expansion result in the unstable cone under the unstable cone condition (Proposition \ref{prop-unst-m-cone}) indicates that the differential $\frac{d}{dt}|a-a_0|^2$ is positive, which shows the final set is the exit.
Indeed, since $(a_0, b_0)$ is on the fast-exit $\mathcal{N}^{f,-}$, then $(a_0, b_0)$ penetrates $\mathcal{C}^u \setminus \mathcal{N}$, which indicates that $\pi_a \varphi(t,(a_0,b_0,y,\epsilon)) > \eta^u$ for sufficiently small $t > 0$.
At the same time, the inequality $\frac{d}{dt}|a-a_0|^2 > 0$ indicates that $\pi_a \varphi(t,(a,b,y, \epsilon)) > \eta^u + l^u$ for the same $t$.
This observation indicates that the final set is a part of $(C^u)^{f,-}$.
\end{proof}

Validation of conic neighborhoods is quite simple, as shown in the following proposition, which immediately follows from the geometry of cones and evolution of disks.

\begin{prop}
Let $\mathcal{N} = T_\epsilon \mathcal{D}_{(\eta^u, \eta^s)}$ be a tubular neighborhood of $S_\epsilon$ validated in Theorem \ref{thm-nbh}.
Assume further that the unstable $M^u$-cone condition is satisfied in the set $\mathcal{N}^u(M^u,l^u) \equiv S_\epsilon + P_\epsilon(y)R\left(\eta^u+l^u, \eta^s+ \frac{l^u}{M^u}\right)$.
Then the set $\mathcal{N}^u(M^u,l^u)$ contains the conic neighborhood $\mathcal{C}^u_{M^u,l^u}$ of $S_\epsilon$.
\par
Similarly, assume that the stable $M^s$-cone condition is satisfied in the set $\mathcal{N}^s(M^s,l^s) \equiv S_\epsilon + P_\epsilon(y)R\left(\eta^u+\frac{l^s}{M^s}, \eta^s+l^s \right)$.
Then the set $\mathcal{N}^s(M^s,l^s)$ contains the conic neighborhood $\mathcal{C}^s_{M^s,l^s}$ of $S_\epsilon$.
\end{prop}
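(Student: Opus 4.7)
The plan is to observe that the proposition is essentially a bookkeeping/monotonicity statement, with two components: (i) a set-theoretic inclusion $\mathcal{C}^u_{M^u,l^u}(\eta^u,\eta^s)\subset \mathcal{N}^u(M^u,l^u)$, and (ii) verification that the set $\mathcal{C}^u_{M^u,l^u}(\eta^u,\eta^s)$ genuinely qualifies as a \emph{conic neighborhood} in the sense of Definition \ref{dfn-conic-nbh}, i.e.\ that the unstable $M^u$-cone condition still holds on it.

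For (i), I would go straight to the explicit formula in Definition \ref{dfn-conic-nbh}:
\[
C^u_{M^u,l^u}(\eta^u,\eta^s) = \bigl[R(\eta^u,\eta^s)\cup \{\text{attached $M^u$-cones}\}\bigr]\cap R\!\left(\eta^u+l^u,\eta^s+\tfrac{l^u}{M^u}\right).
\]
By construction this set is already cut down by the rectangle $R(\eta^u+l^u,\eta^s+l^u/M^u)$, so applying the diffeomorphism $T_\epsilon$ (which, per the setup in Theorem \ref{thm-nbh}, sends $(a,b,y)$ to $(h_\epsilon(y)+P_\epsilon(y)(a,b),y)$) and multiplying by $Y$ yields
\[
\mathcal{C}^u_{M^u,l^u}(\eta^u,\eta^s) \subset T_\epsilon\!\left(R\!\left(\eta^u+l^u,\eta^s+\tfrac{l^u}{M^u}\right)\times Y\right)= S_\epsilon + P_\epsilon(y)\,R\!\left(\eta^u+l^u,\eta^s+\tfrac{l^u}{M^u}\right) = \mathcal{N}^u(M^u,l^u),
\]
which is precisely the claimed set-containment.

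For (ii), the $M^u$-cone condition (inequalities (\ref{ineq-m-graph-unstable})--(\ref{ineq-m-cone-unstable})) is phrased entirely in terms of the local rates of Definition \ref{dfn-rate-fast-slow}, which are combinations of an $\inf$ over the domain minus a constant times a $\sup$ over the domain. Such combinations are monotone with respect to set inclusion in the favorable direction: shrinking the domain can only increase the infima and decrease the suprema, so the combinations can only grow. Therefore the cone condition, assumed to hold on $\mathcal{N}^u(M^u,l^u)$, automatically descends to the subset $\mathcal{C}^u_{M^u,l^u}(\eta^u,\eta^s)$ obtained in (i), so Definition \ref{dfn-conic-nbh} is satisfied and we indeed have a conic neighborhood. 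The stable case proceeds symmetrically, swapping the roles of $(a,l^u,M^u)$ and $(b,l^s,M^s)$.

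I do not expect any real obstacle; the one spot that requires a brief geometric check is verifying that the attached cones in the definition of $C^u_{M^u,l^u}$ actually fit inside the enlarged box $R(\eta^u+l^u,\eta^s+l^u/M^u)$ after intersection: for $(a,b)$ inside an unstable $M^u$-cone with vertex $(a_0,b_0)\in R(\eta^u,\eta^s)^{f,-}$ and on the outward side, the inequality $|b-b_0|\le |a-a_0|/M^u$ together with $|a|\le \eta^u+l^u$ yields $|b|\le \eta^s+l^u/M^u$. With this one-line check in place, the rest of the argument is just definition-chasing plus the monotonicity of the rate-condition constants under set inclusion.
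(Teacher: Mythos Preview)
Your proposal is correct and follows essentially the same route as the paper: the paper's proof is the one-line observation that by Definition~\ref{dfn-conic-nbh} the set $C^u_{M^u,l^u}(\eta^u,\eta^s)$ is already intersected with $R(\eta^u+l^u,\eta^s+l^u/M^u)$, so applying $T_\epsilon(\cdot\times Y)$ gives the inclusion $\mathcal{C}^u_{M^u,l^u}(\eta^u,\eta^s)\subset T_\epsilon\mathcal{D}_{(\eta^u+l^u,\eta^s+l^u/M^u)} = \mathcal{N}^u(M^u,l^u)$ directly. Your part~(ii), the monotonicity argument that the cone condition on $\mathcal{N}^u(M^u,l^u)$ descends to the subset $\mathcal{C}^u$, is a detail the paper leaves implicit but which is indeed needed to conclude that $\mathcal{C}^u$ is a \emph{conic neighborhood} in the sense of Definition~\ref{dfn-conic-nbh}; the ``geometric check'' you flag at the end is unnecessary, since the intersection with the enlarged rectangle is already part of the definition and makes the inclusion automatic.
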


\begin{proof}
This immediately follows from the inclusion
\begin{equation*}
\mathcal{C}^u_{M^u,l^u}(\eta^u,\eta^s) = T_\epsilon (C^u_{M^u,l^u}(\eta^u, \eta^s)\times Y) \subset T_\epsilon \mathcal{D}_{\left(\eta^u+l^u, \eta^s+ \frac{l^u}{M^u}\right)}.
\end{equation*}
The statement for $\mathcal{C}^s$ is similar.
Compare with Definition \ref{dfn-conic-nbh}.
\end{proof}

\begin{figure}[htbp]\em
\begin{minipage}{1.0\hsize}
\centering
\includegraphics[width=12.0cm]{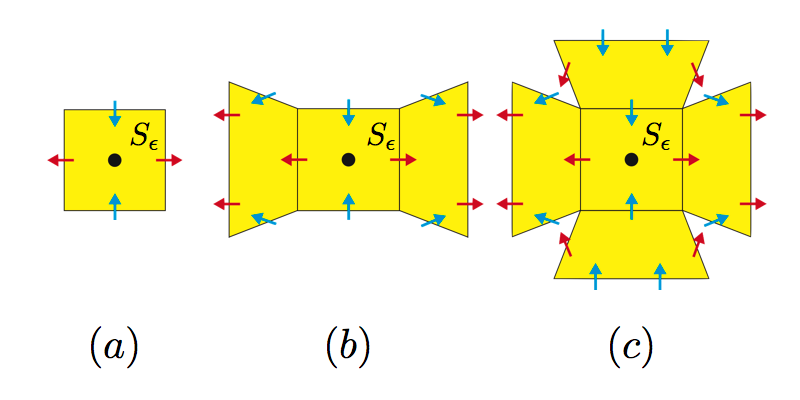}
\end{minipage}
\caption{Sections of tubular, conic and star-shaped neighborhoods of $S_\epsilon$}
\label{fig-pic_tub}
Unlike the usual validations of equilibria in terms of isolating blocks or radii polynomials (e.g., \cite{CL2013, CLM2015}), {\bf the center of all neighborhoods is not the approximate equilibrium but the rigorous equilibrium}.
\end{figure}

As a corollary of Invariant Manifold Theorem (Theorem \ref{thm-smooth-slow-mfd}), we obtain the following extended representation of $W^s(S_\epsilon)$ and $W^u(S_\epsilon)$ in conic neighborhoods.

\begin{cor}
Consider (\ref{abstract-form}). 
Let $\mathcal{C}^u$ and $\mathcal{C}^s$ be unstable and stable conic neighborhoods of $S_\epsilon$, respectively, for $\epsilon\in [0,\epsilon_0]$, and $\mathcal{S}=\mathcal{C}^u\cup \mathcal{C}^s$ be a star-shaped neighborhood. 
Assume that the stable $M^s$-cone condition in $\mathcal{C}^s$ and the stable $M^s$-cone condition in $\mathcal{C}^s$ are satisfied.
Then
\begin{enumerate}
\item the smooth function $h^s(b,y,\epsilon)$ validated in Theorem \ref{thm-smooth-slow-mfd} is extended to $\prod_{i=n_u+1}^n [-(\eta^s+l^s),\eta^s+l^s]\times Y\times [0,\epsilon_0]$ with the range $\prod_{i=1}^{n_u} [-(\eta^u+l^s/M^s),\eta^u+l^s/M^s]$.
The graph $(h^s(b,y,\epsilon), b,y)$ is included in $C^s_{M^s}$.
\item the smooth function $h^u(a,y,\epsilon)$ validated in Theorem \ref{thm-smooth-slow-mfd} is extended to $\prod_{i=1}^{n_u} [-(\eta^u+l^u),\eta^u+l^u]\times Y\times [0,\epsilon_0]$ with the range $\prod_{i=n_u+1}^n [-(\eta^s+l^u/M^u),\eta^s+l^u/M^u]$.
The graph $(a, h^u(a,y,\epsilon),y)$ is included in $C^u_{M^u}$.
\end{enumerate}
\end{cor}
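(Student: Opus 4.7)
The approach is to enlarge the tubular neighborhood $\mathcal{N}$ to the rectangular block $\mathcal{N}^s(M^s, l^s) = T_\epsilon(R(\eta^u+l^s/M^s,\, \eta^s+l^s)\times Y)$ and reapply Theorem \ref{thm-smooth-slow-mfd} there. The first task is to verify that $\mathcal{N}^s(M^s, l^s)$ itself carries a fast-saddle-type structure on which the stable $M^s$-cone condition holds: the cone condition assumed in $\mathcal{C}^s$, together with the construction in Section \ref{section-block} applied with the enlarged radii $(\eta^u + l^s/M^s,\, \eta^s + l^s)$, propagates by uniform continuity of the vector field, and the required boundary sign conditions on the enlarged $a$- and $b$-faces follow from the same nonlinear error estimates used when constructing $\mathcal{N}$ itself. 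This is essentially the content of the proposition preceding the corollary, which asserts $\mathcal{N}^s(M^s, l^s) \supset \mathcal{C}^s_{M^s, l^s}$ once the cone condition is upgraded to the enlarged block.

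Once $\mathcal{N}^s(M^s, l^s)$ is validated as a fast-saddle-type block with the stable cone condition, Theorem \ref{thm-smooth-slow-mfd} yields a $C^k$ graph $a = h^s(b, y, \epsilon)$ defined on $\prod_{i=n_u+1}^n [-(\eta^s + l^s),\, \eta^s + l^s] \times Y \times [0,\epsilon_0]$ with range contained in $\prod_{i=1}^{n_u} [-(\eta^u + l^s/M^s),\, \eta^u + l^s/M^s]$. Uniqueness of the stable manifold under the cone condition forces this $h^s$ to coincide with the original one on the overlap $\{|b_i|\leq \eta^s\}$, so the new function is a genuine extension rather than a new object.

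It remains to check that the graph lies in $C^s_{M^s, l^s}(\eta^u, \eta^s)$. By Theorem 16 in \cite{CZ2015}, the stable $M^s$-cone condition implies that $h^s$ is Lipschitz in $(b, y)$ with constant $1/M^s$. For any $(b, y)$ with $|b_{i_0}|_\infty > \eta^s$ for some $i_0$, define $b_0$ componentwise by $b_{0,i} := \sgn(b_i)\min\{|b_i|,\, \eta^s\}$; then $|b_0|_\infty = \eta^s$ and $a_0 := h^s(b_0, y, \epsilon)$ satisfies $|a_0|_\infty \leq \eta^u$, so $(a_0, b_0) \in R(\eta^u, \eta^s)^{f,+}$. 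The Lipschitz bound gives $M^s |h^s(b, y, \epsilon) - a_0| \leq |b - b_0|$, which is precisely the defining inequality of the stable $M^s$-cone with vertex $(a_0, b_0)$, so $(h^s(b, y, \epsilon), b) \in C^s_{M^s, l^s}(\eta^u, \eta^s)$. The statement for $h^u$ in $\mathcal{C}^u$ is symmetric, using the unstable $M^u$-cone condition in place of the stable one. I expect the main obstacle to be the first step: converting the cone condition on the cone-shaped region $\mathcal{C}^s$ into a genuine fast-saddle-type structure on the full enclosing rectangle $\mathcal{N}^s(M^s, l^s)$, since the rectangle contains points outside $\mathcal{C}^s$ where the cone hypothesis was not directly assumed and the boundary transversality on the outer $a$- and $b$-faces must be verified from the enlarged error bounds rather than from the cone condition itself.
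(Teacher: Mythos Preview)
The paper states this Corollary without proof, framing it only as ``a corollary of Invariant Manifold Theorem (Theorem~\ref{thm-smooth-slow-mfd})''. Your approach---apply Theorem~\ref{thm-smooth-slow-mfd} on the enlarged rectangle $\mathcal{N}^s(M^s,l^s)$, use uniqueness to identify the extension with the original $h^s$ on the overlap, and then use the Lipschitz bound $1/M^s$ to confine the graph to the conic region---is exactly the natural reading of what the paper intends.

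You have correctly located the one genuine gap. The Corollary literally assumes the stable cone condition only in $\mathcal{C}^s$, whereas Theorem~\ref{thm-smooth-slow-mfd} needs a product block, and your appeal to ``uniform continuity of the vector field'' does not close that gap: the set $\mathcal{N}^s(M^s,l^s)\setminus \mathcal{C}^s$ is not small. The resolution lies in the preceding Proposition, which is the paper's actual validation mechanism for conic neighborhoods: one checks the cone condition on the full rectangle $\mathcal{N}^s(M^s,l^s)$, and only then concludes $\mathcal{C}^s\subset \mathcal{N}^s(M^s,l^s)$. So in the paper's workflow the stronger hypothesis is always in hand, and the Corollary should be read with that implicit upgrade. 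If you insist on the literal hypothesis, you would have to run the cone/graph-transform argument directly on the $ch$-set $\mathcal{C}^s$ itself (the previous Theorem shows it is homeomorphic to a product block with identified exit and entrance), which is feasible but is genuine extra work beyond simply citing Theorem~\ref{thm-smooth-slow-mfd}.

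One small point in your Lipschitz step: you need $|a_0|\le \eta^u$ so that $(a_0,b_0)\in R(\eta^u,\eta^s)^{f,+}$. This holds because your truncated $b_0$ lies in $[-\eta^s,\eta^s]^{n_s}$, where the extended $h^s$ agrees with the original one from $\mathcal{N}$ and hence takes values in $[-\eta^u,\eta^u]^{n_u}$; it would be worth saying this explicitly.
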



%
%
\subsection{Remark : an aspect of parameterization}
\label{section-param}

We comment about relationships to {\em parameterization method} (e.g., \cite{CFdlL2003, CFdlL2005}) as well as their rigorous numerics.
\begin{dfn}[e.g., \cite{CFdlL2003}]\rm
Consider a nonlinear vector field $F$ on $\mathbb{R}^m$ and another vector field $G$ on $\mathbb{R}^m$.
A {\em parameterization} of $F$ to $G$ is a homeomorphism $K : \mathbb{R}^m\to \mathbb{R}^m$ such that the following equation holds:
\begin{equation*}
F\circ K = K\circ G,
\end{equation*}
namely, $K$ is a topological conjugacy between $F$ and $G$.
\end{dfn}

Validated slow manifolds $S_\epsilon = \{x = h_\epsilon(y)\}$ and eigenpairs $\{\lambda_i(y;\epsilon),u_i(y;\epsilon)\}_{i=1}^n$ gives a parameterization between (\ref{fast-slow})$_\epsilon$ and (\ref{abstract-form}) on $Y\subset \mathbb{R}^l$, which can be shown as follows. 
Let $P_\epsilon(y)$ be a nonsingular matrix whose $j$-th column is $u_j(y;\epsilon)$.
Then the mapping
\begin{equation}
\label{our-param}
K_\epsilon(y) : \mathbb{R}^n\to \mathbb{R}^n,\quad K_\epsilon(y)(z) := h_\epsilon(y) + P_\epsilon(y)z
\end{equation}
gives a $C^k$-family of change of coordinates between (\ref{fast-slow})$_\epsilon$ and (\ref{abstract-form}).
In particular, the following statement holds true.
\begin{cor}
Assume that there is a compact $C^k$-slow manifold $S_\epsilon = \{x = h_\epsilon(y)\mid y\in Y\}$ with $k\geq 1$, where $Y\subset \mathbb{R}^l$ is an $h$-set.
Then the $C^k$-diffeomorphic family of mappings $\mathcal{K}=[K]\times id_l,\ [K] = \{K_\epsilon(y) : \mathbb{R}^n\to \mathbb{R}^n\mid y\in Y, \epsilon \in [0,\epsilon_0]\}$ gives a family of parameterizations of (\ref{fast-slow})$_\epsilon$ to (\ref{abstract-form}) in the sense that
\begin{equation*}
\begin{pmatrix}
f(K_\epsilon(y)z,y,\epsilon)\\
\epsilon g(K_\epsilon(y)z,y,\epsilon)
\end{pmatrix}
=
\{(h_\epsilon(y),0)\} + 
(P_\epsilon(y)\ id_{l})
\begin{pmatrix}
\begin{pmatrix}
A_\epsilon(y) a + F_1(K_\epsilon(y)z, y,\epsilon)\\
B_\epsilon(y) b + F_2(K_\epsilon(y)z, y,\epsilon)\\
\end{pmatrix}\\
\epsilon g(K_\epsilon(y)z,y,\epsilon)
\end{pmatrix}
\end{equation*}
for some smooth functions $F_1$ and $F_2$, such that
\begin{equation*}
P_\epsilon(y)f_x(h_\epsilon(y),y,\epsilon)P_\epsilon(y)^{-1} = 
\begin{pmatrix}
A_\epsilon(y) & 0 \\ 0 & B_\epsilon(y)
\end{pmatrix}\equiv \diag (\lambda_{\epsilon,1}(y), \cdots, \lambda_{\epsilon,n}(y))
\end{equation*}
and that $F_i(0,y,\epsilon) \equiv 0$ for $y\in Y$, where $id_{l}$ is the identity map on $\mathbb{R}^l$.
\end{cor}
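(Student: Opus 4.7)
My plan is to exhibit the parameterization by direct computation: substitute the explicit candidate diffeomorphism $\mathcal{K}(z,y) = (h_\epsilon(y) + P_\epsilon(y)z,\, y)$ into the fast-slow system (\ref{fast-slow}), differentiate along trajectories, and match terms. Writing $x = h_\epsilon(y) + P_\epsilon(y) z$ and substituting $y' = \epsilon g(K_\epsilon(y)z, y, \epsilon)$, the chain rule produces
\[
x' = \frac{\partial h_\epsilon}{\partial y}(y)\, y' + \frac{\partial (P_\epsilon(y) z)}{\partial y}\, y' + P_\epsilon(y)\, z',
\]
and solving this for $z'$ by inverting $P_\epsilon(y)$ yields the $z$ vector field in the new coordinates.

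Next I would Taylor expand $f$ in its first argument around the slow manifold $\{x = h_\epsilon(y)\}$,
\[
f(K_\epsilon(y)z, y, \epsilon) = f(h_\epsilon(y), y, \epsilon) + f_x(h_\epsilon(y), y, \epsilon)\, P_\epsilon(y) z + R(z, y, \epsilon),
\]
with $R = O(\|z\|^2)$. Left multiplying by $P_\epsilon(y)^{-1}$, the diagonalization identity from Theorem \ref{thm-bundle} turns the linear term into $\diag(\lambda_{\epsilon,1}(y), \ldots, \lambda_{\epsilon,n}(y))\, z$, giving the block $(A_\epsilon(y) a, B_\epsilon(y) b)^T$ after splitting $z = (a, b)$. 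I would then define $F_1, F_2$ to be the remaining terms, namely the unstable and stable components of
\[
P_\epsilon(y)^{-1}\left[f(h_\epsilon(y), y, \epsilon) + R(z, y, \epsilon) - \epsilon\left(\tfrac{\partial h_\epsilon}{\partial y}(y) + \tfrac{\partial (P_\epsilon(y) z)}{\partial y}\right) g(K_\epsilon(y)z, y, \epsilon)\right].
\]

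The identity $F_i(0, y, \epsilon) \equiv 0$ will then drop out from invariance of $S_\epsilon$: at $z = 0$ the remainder $R$ vanishes and the bracket collapses to
\[
f(h_\epsilon(y), y, \epsilon) - \epsilon \tfrac{\partial h_\epsilon}{\partial y}(y)\, g(h_\epsilon(y), y, \epsilon),
\]
which is exactly the graph-invariance condition for $\{x = h_\epsilon(y)\}$ under (\ref{fast-slow}). Since $S_\epsilon$ is invariant, this quantity vanishes for every $y \in Y$ and $\epsilon \in [0, \epsilon_0]$, whence $F_1(0, y, \epsilon) = F_2(0, y, \epsilon) = 0$.

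The hard part will be verifying that $\mathcal{K}$ is actually $C^k$-diffeomorphic, equivalently that the eigenvector matrix $P_\epsilon(y)$ inherits the $C^k$-regularity of $h_\epsilon$. This is not automatic, since eigenvectors can be less regular than the matrix they diagonalize. My strategy is to invoke simpleness of the eigenvalues on $Y \times [0, \epsilon_0]$, validated in Step 2 of Algorithm \ref{alg-bundle} and characterized by Lemma \ref{lem-nonsing}, so that the Jacobian $J_{\mathbb{K}}$ of the defining system $F_{\mathbb{K}}(u, \lambda; y) = 0$ from (\ref{ev-prob-real})--(\ref{ev-prob-complex}) is invertible along the validated branch. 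The implicit function theorem then promotes each eigenpair $(u_{\epsilon,i}(y), \lambda_{\epsilon,i}(y))$ to the $C^k$-regularity of the map $y \mapsto f_x(h_\epsilon(y), y, \epsilon)$, which is $C^k$ because $h_\epsilon \in C^k$ and $f \in C^{k+1}$. Consequently $P_\epsilon(y)$ is $C^k$ and nonsingular on $Y$, the map $\mathcal{K}$ is a $C^k$-diffeomorphic family, and the smoothness of $F_1, F_2$ follows by composition.
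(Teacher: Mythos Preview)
Your proposal is correct and follows essentially the same route as the paper's own proof, which is extremely terse: the paper simply asserts that $K_\epsilon(y)$ is $C^k$ ``by assumption of $S_\epsilon$ and smooth dependence of eigenpairs with respect to the vector field $f$'' and that ``the rest of statements directly follows from definitions.'' Your chain-rule/Taylor computation, the invariance argument for $F_i(0,y,\epsilon)\equiv 0$, and the implicit-function-theorem justification of eigenpair regularity via Lemma~\ref{lem-nonsing} are exactly the details the paper leaves implicit, so you have filled in what the author omitted rather than taken a different path.
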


\begin{proof}
By assumption of $S_\epsilon$ and smooth dependence of eigenpairs with respect to the vector field $f$, the mapping $K_\epsilon(y)$ is $C^k$ for all $(y,\epsilon)$.
It is also $C^k$ with respect to $(y,\epsilon)$.
The rest of statements directly follows from definitions.
\end{proof}

The above statement shows that our change of coordinate around slow manifolds stated in Theorem \ref{thm-nbh} gives a parameterization $\mathcal{K}$ of vector field (\ref{fast-slow})$_\epsilon$ for all $\epsilon\in [0,\epsilon_0]$ up to the first (namely, linear) order term.
Applications of general (namely, up to higher order) parameterization with both non-rigorous and rigorous numerical calculations open the door for calculating Fenichel normal forms around concrete slow manifolds for concrete fast-slow systems.
Numerical applications of parameterization can be seen in e.g., \cite{CLM2015}.

\section{Numerical validation examples and discussions}
\label{section-examples}
In this section, we demonstrate validations of tubular neighborhoods centered at slow manifolds.
Our procedure also validates associated vector bundles over slow manifolds.
We further demonstrate validations of conic and star-shaped neighborhoods associated with tubular neighborhoods centered at slow manifolds.
Our examples here focus on the following three points:
\begin{itemize}
\item {\em twisted tubular neighborhoods} (Section \ref{section-twisted});
\item {\em global and smooth neighborhoods along nonlinear curves} (Section \ref{section-FN}); and
\item applicability of our procedures for fast-slow systems with {\em multi-dimensional fast and slow variables} (Section \ref{section-PP}).
\end{itemize}

All computations are done by MacBook Pro Early 2015 model (3.1 GHz, Intel Core i7 Processor, 16GB 1867 MHz DDR3 Memory), GCC version 7.0 (with option {\tt -O3 -DNDEBUG -DKV\_FASTROUND}) and kv library \cite{kv} version 0.4.43.
Computation times in our validation results in these environments stated here are listed in Table \ref{table-time}; at the end of this section.
Validation codes are available at \cite{Mat_RM}.

\subsection{Twisted slow-periodic motion}
\label{section-twisted}
The first example is an artificial but simple system in cylindrical coordinate in $\mathbb{R}^3$ given by 
\begin{equation}
\label{ex-Fenichel}
\begin{cases}
r' = r(1-r^2)\cos \theta - z\sin \theta,\\
z' = r(1-r^2)\sin \theta + z\cos \theta,\\
\theta' = \epsilon.
\end{cases}
\end{equation}
The aim of this example is to construct \lq\lq twisted" neighborhoods of slow manifolds.
\par
First consider the case $\epsilon = 0$, in which case $\theta$ is just a parameter.
Obviously the set $S = \{r=1, z=0\}$ consists of equilibria, which is actually an invariant circle.
We then follow Algorithms \ref{alg-bundle} and {alg-nbh}, which validate, if succeeded, slow manifolds $S_\epsilon$ near $S$ as well as vector bundles over $S$ and tubular neighborhoods centered at $S_\epsilon$.

\begin{car}
\label{car-Fenichel}
Consider (\ref{ex-Fenichel}).
We validate a tubular neighborhood centered at the slow manifold $S_\epsilon$ near or equal to the nullcline $S=\{r=1, z = 0, \theta\in [0,2\pi]\}$ with radii $\eta^u = \eta^s = 1.0\times 10^{-4}$.
The vector bundles $V^u_\epsilon$ and $V^s_\epsilon$ over $S_\epsilon$ are shown in Fig. \ref{fig-fiber-test1}.
\end{car}
Note that $\epsilon$ gives no effect on whole validations in this example.
This example, we show that we can validate twisted vector bundles as well as tubular neighborhoods of slow manifolds which reflect the twistedness.

\begin{figure}[htbp]\em
\begin{minipage}{0.5\hsize}
\centering
\includegraphics[width=7.0cm]{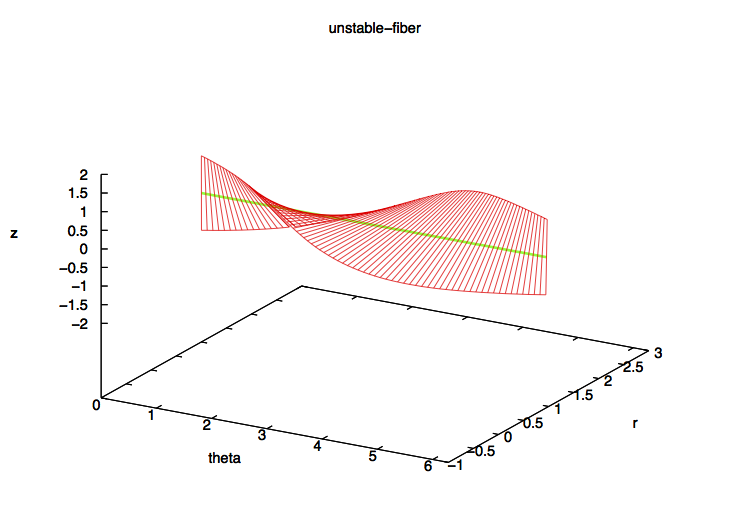}
(a)
\end{minipage}
\begin{minipage}{0.5\hsize}
\centering
\includegraphics[width=7.0cm]{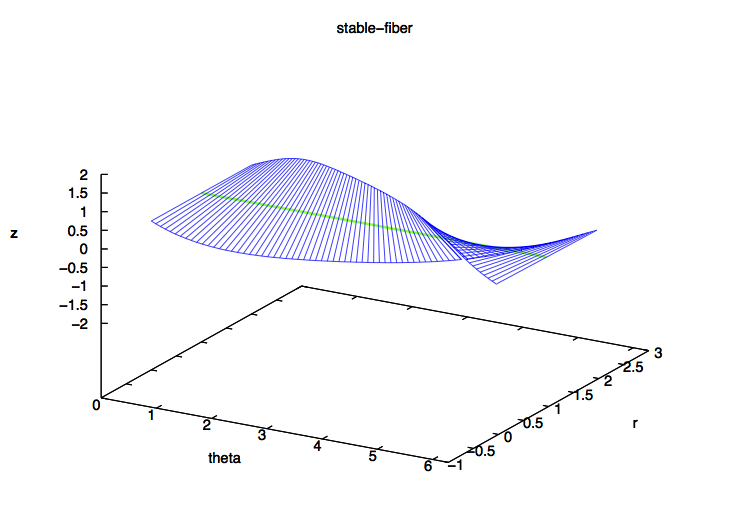}
(b)
\end{minipage}
\caption{The center of validated vector bundles $V^u_\epsilon$ and $V^s_\epsilon$ for (\ref{ex-Fenichel})}
\label{fig-fiber-test1}
(a) : The bundle $V^u_\epsilon$. 
The base space is $S_\epsilon$ with $\theta \in [0,2\pi]$ colored by green. 
Each fiber is spanned by the unstable eigenvector $u(\theta)$ colored by red.
\par
(b) : The bundle $V^s_\epsilon$. 
The base space is $S_\epsilon$ with $\theta \in [0,2\pi]$ colored by green. 
Each fiber is spanned by the stable eigenvector $s(\theta)$ colored by blue.
\par
\end{figure}

\begin{center}
  \begin{table}[h]
    \begin{center}
      \begin{tabular}{|c|c|c|} \hline
	$\theta$ & $\lambda_1(\theta)$	 & $u_1(\theta)$ \\ \hline
	$0$ & $[0.999992741958,1.000006791375]$ &  $\begin{pmatrix} [-6.673440138039\times 10^{-4},6.772434334871\times 10^{-7}]\\ [0.9999979331077,1.000002155782]\end{pmatrix}$\\
	$\pi/2$ & $1.4143_{05544072}^{25544075}$ &  $\begin{pmatrix} 0.577_{0155092804}^{8514435909}\\ -0.816_{7315376799}^{1441838033}\end{pmatrix}$ \\
	$\pi$ & $[1.999994416494, 2.000003915277]$ & $\begin{pmatrix} [0.9999977581224,1.000002285772] \\ [-1.063112822102\times 10^{-3}, 2.72908020214\times 10^{-4}]  \end{pmatrix}$ \\
	$3\pi/2$ & $1.413_{898226973}^{918226976}$ & $\begin{pmatrix} 0.57_{66828025463}^{75189566192}\\ 0.816_{3794854755}^{9664974717}\end{pmatrix}$  \\ \hline
	$\theta$ & $\lambda_2(\theta)$	 & $u_2(\theta)$ \\ \hline
	$0$ & $[-2.00000489336,-1.999992839973]$ & $\begin{pmatrix} [0.9999974324377,1.000002323118] \\  [-1.345486294031\times 10^{-6},1.334678804813\times 10^{-3}] \end{pmatrix}$  \\
	$\pi/2$ & $-1.4141_{21870851}^{01870848}$ & $\begin{pmatrix} 0.57_{684921267}^{76851432715}\\ 0.816_{2618998034}^{8490032262} \end{pmatrix}$  \\
	$\pi$ & $[-1.000005814453,-0.9999935855922]$ & $\begin{pmatrix} [-1.364580534662\times 10^{-4},5.31560477539\times 10^{-4}] \\ [0.9999980145027,1.000002146471]\end{pmatrix}$  \\
	$3\pi/2$ & $-1.4145_{29246614}^{09246611}$ & $\begin{pmatrix} 0.57_{71816924196}^{80178576221}\\ -0.816_{6141008822}^{0263373207} \end{pmatrix}$  \\ \hline
      \end{tabular}
    \end{center}
    \caption{Several validated eigenpairs in Computer Assisted Result \ref{car-Fenichel}.}
    \label{table-Fenichel}
  $(\lambda_1(\theta), u_1(\theta))$ denotes the parameter family of eigenpairs associated with unstable eigenvalue.
  $(\lambda_2(\theta), u_2(\theta))$ denotes the parameter family of eigenpairs associated with stable eigenvalue.
  \end{table}
\end{center}

\subsection{The FitzHugh-Nagumo system}
\label{section-FN}
The second example is the FitzHugh-Nagumo system
\begin{equation}
\label{FN}
\begin{cases}
u' = v & \text{}\\
v' = \delta^{-1}(cv - f(u) + w) & \\
w' = \epsilon c^{-1}(u-\gamma w),
\end{cases}
\end{equation}
where $a\in (0,1/2)$, $c,\gamma$ and $\delta$ are positive parameters, and $f(u) = u(u-a)(1-u)$. (\ref{FN}) is well-known as the system of traveling wave solutions $(U,W) = (\psi_U(x-ct), \psi_W(x-ct))$ of the following partial differential equation: 
\begin{equation}
\label{FN-PDE}
\begin{cases}
U_t = U_{xx} + f(U) - W &\\
W_t = \epsilon(U-\gamma W) &
\end{cases},\quad t>0,\ x\in \mathbb{R}.
\end{equation}
Validation of slow manifolds for (\ref{FN}) with explicit ranges of $\epsilon$ are discussed in \cite{CZ, Mat2}.
In \cite{Mat2}, fast-saddle-type blocks are constructed {\em independently} on intervals $[y_j^-,y_j^+]$ centered at sample points $y_j$, which violates the smoothness of the union of blocks.
On the other hand, isolating segments, which is a counterpart of fast-saddle-type blocks in \cite{CZ}, are constructed which forms an $h$-set.
Here we focus on how we construct smooth neighborhoods, say $h$-sets, of {\em nonlinear} slow manifolds {\em with arbitrary length systematically}.
A series of our validation procedures stated in Section \ref{section-main} gives an answer to this problem; that is, 
smooth neighborhoods can be extended in arbitrary length as long as Algorithms \ref{alg-bundle} and \ref{alg-nbh} return {\tt true}.
In the following result, we only set parameters including the radii $\eta^u, \eta^s$ of our desire, approximate initial equilibria (one equilibrium for each branch of $S_0$) in advance.

\begin{car}
\label{car-FN}
Consider (\ref{FN}).
Set $a = 0.3$, $\gamma = 10.0$ and $\delta = 9.0$.
For all $c\in [0.799, 0.801]$ and $\epsilon\in [0,1.0\times 10^{-4}]$, we validate a tubular neighborhood centered at the slow manifold $S_\epsilon$ near or on the portion of nullcline $S=\{f(u) = w\}\cap \{w\in [-0.0002, 0.08]\}$ with radii $\eta^u = \eta^s = 1.0\times 10^{-3}$. 
Slope $\{M^\alpha\}_{\alpha=u,s}$ and length $\{l^\alpha\}_{\alpha=u,s}$ of cones for validating conic and star-shaped neighborhoods are
\begin{align*}
M^u = 5,\ M^s = 10,\ l^u = 0.01,\ l^s = 0.09 &\quad \text{ for branch containing }(u,w) = (0,0),\\
M^u = 4,\ M^s = 6,\ l^u = 0.008,\ l^s = 0.08 &\quad \text{ for branch containing }(u,w) = (1,0).
\end{align*}
Validated tubular and star-shaped neighborhoods are shown in Figures \ref{fig-fn-tube1} - \ref{fig-star-fn}.
\par
Moreover, the leftmost branch of $S_\epsilon$ is $C^{17}$ and the rightmost branch of $S_\epsilon$ is $C^7$ for all $\epsilon\in [0,1.0\times 10^{-4}]$.
\end{car}
The parameter values in Computer Assisted Result \ref{car-Fenichel} is those for validating homoclinic orbits for (\ref{FN}) with $\epsilon > 0$ in \cite{Mat2}.
\par
Smoothness validations in terms of rate conditions (Figure \ref{fig-fn-smoothness}) estimate normal hyperbolicity of slow manifolds.
From Figure \ref{fig-fn-smoothness}, we expect that slow manifolds as well as their tubular neighborhoods can be extended arbitrarily in suitable directions.
Indeed, we also obtain the following result, for example.
What we changed from Computer Assisted Result \ref{car-FN} is just the number of iterations $m_0$.

\begin{car}
\label{car-FN-2}
Under the same settings as Computer Assisted Result \ref{car-FN}, for all $c\in [0.799, 0.801]$ and $\epsilon\in [0,1.0\times 10^{-4}]$, we validate a tubular neighborhood centered at the slow manifold $S_\epsilon$ near or on the portion of nullcline $S=\{f(u) = w\}\cap \{u\leq 0.01, w\in [-0.002, 82.4716]\}$ with radii $\eta^u = \eta^s = 1.0\times 10^{-3}$. 
\end{car}

\begin{figure}[htbp]\em
\begin{minipage}{1.0\hsize}
\centering
\includegraphics[width=9.0cm]{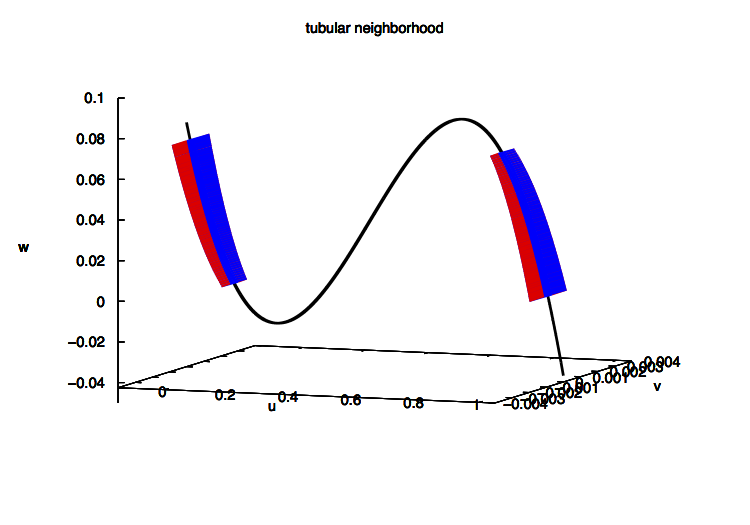}
\end{minipage}
\caption{Tubular neighborhood of slow manifolds for (\ref{FN})}
\label{fig-fn-tube1}
Black curve represents the nullcline $f(u)-w = 0$.
Two tubular neighborhoods are validated around $(u,v)=(0,0)$ and $(1,0)$.
Red surfaces are fast exit. Blue surfaces are fast entrance.
\end{figure}

\begin{figure}[htbp]\em
\begin{minipage}{0.5\hsize}
\centering
\includegraphics[width=7.5cm]{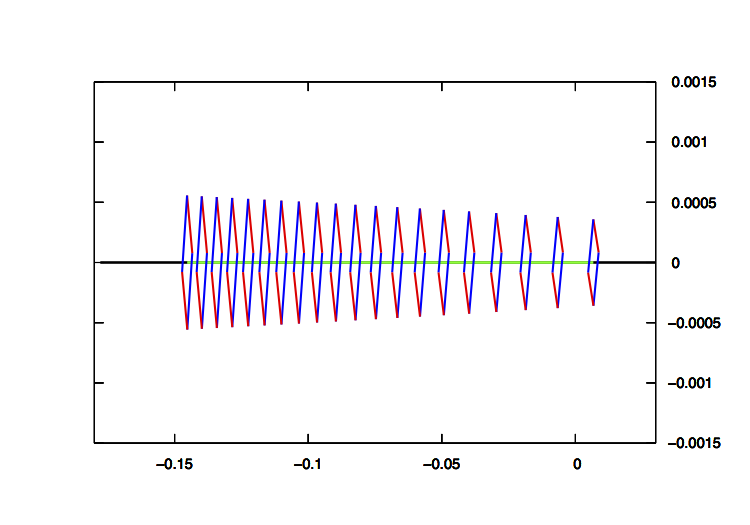}
(a)
\end{minipage}
\begin{minipage}{0.5\hsize}
\centering
\includegraphics[width=7.5cm]{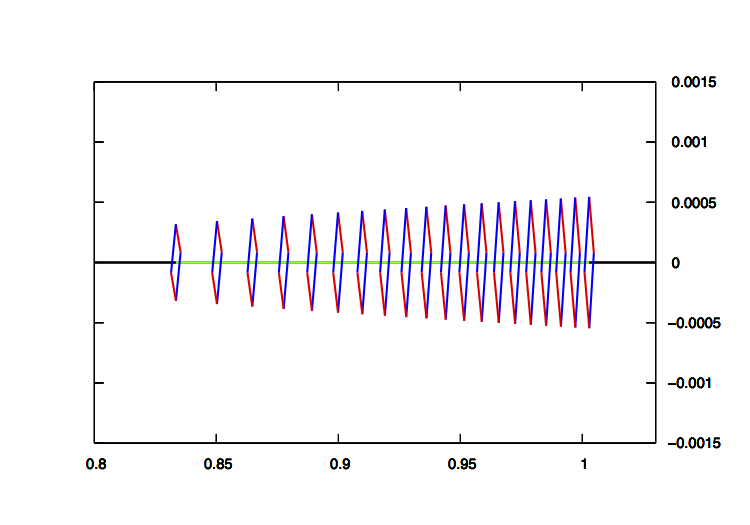}
(b)
\end{minipage}
\caption{Tubular neighborhood of slow manifolds for (\ref{FN}) : Projections of sections $\{w=\text{const}.\}$}
\label{fig-fn-tube2}
(a) : Sections of tubular neighborhoods around $(u,v) = (0,0)$. 
(b) : Sections of tubular neighborhoods around $(u,v) = (1,0)$. 
Red surfaces are fast exit. Blue surfaces are fast entrance.
\end{figure}

\begin{figure}[htbp]\em
\begin{minipage}{0.5\hsize}
\centering
\includegraphics[width=7.0cm]{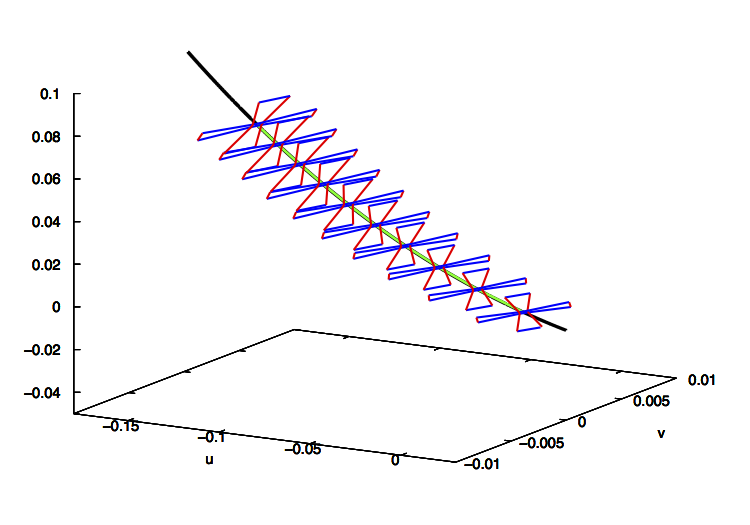}
(a)
\end{minipage}
\begin{minipage}{0.5\hsize}
\centering
\includegraphics[width=7.0cm]{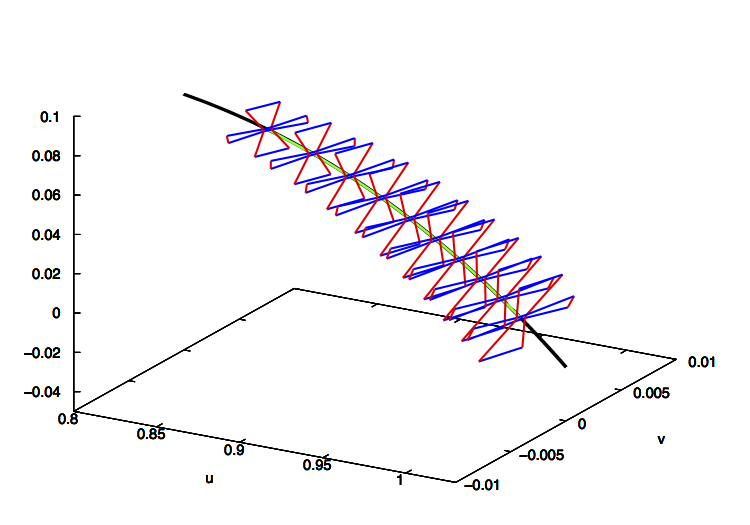}
(b)
\end{minipage}
\begin{minipage}{0.5\hsize}
\centering
\includegraphics[width=7.0cm]{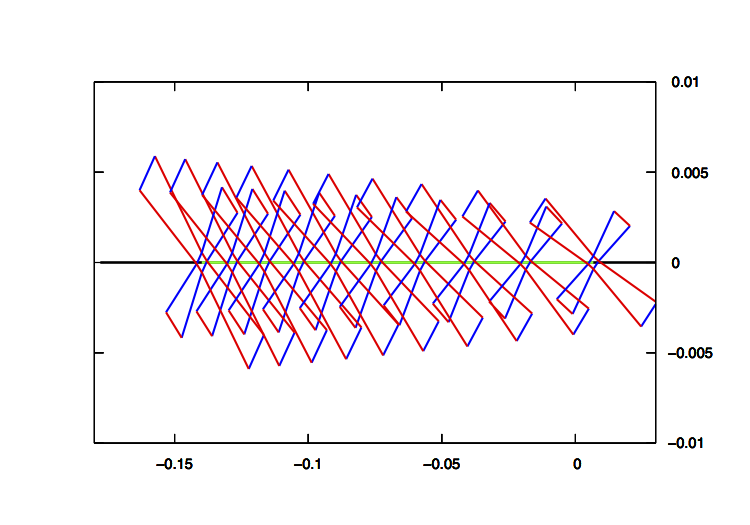}
(c)
\end{minipage}
\begin{minipage}{0.5\hsize}
\centering
\includegraphics[width=7.0cm]{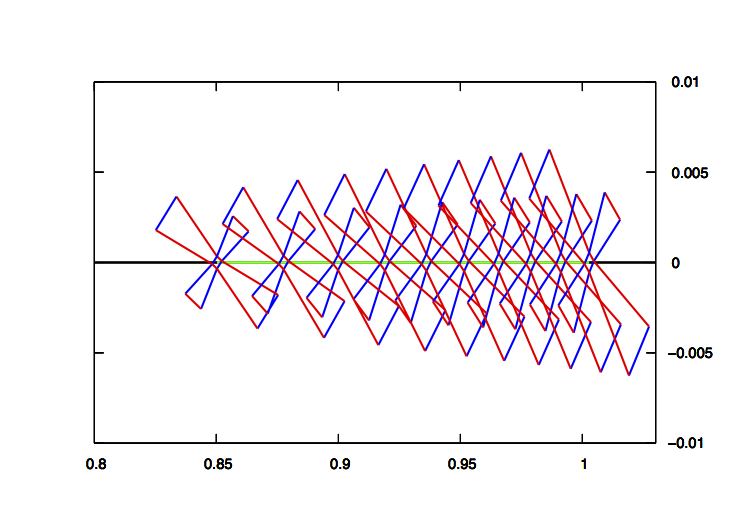}
(d)
\end{minipage}
\caption{Fast-saddle-type stars (star-shaped neighborhoods) of slow manifolds for (\ref{FN})}
\label{fig-star-fn}
(a) : Sections of star-shaped neighborhoods around $(u,v)=(0,0)$: the 3D plot. 
\par
(b) : Sections of star-shaped neighborhoods around $(u,v)=(1,0)$: the 3D plot.
 \par
(c) : Sections of star-shaped neighborhoods around $(u,v)=(0,0)$: the 2D plot.
\par
(d) : Sections of star-shaped neighborhoods around $(u,v)=(1,0)$: the 2D plot.
\par
The central blocks of stars correspond to slices of tubular neighborhoods shown in Figs. \ref{fig-fn-tube1}- \ref{fig-fn-tube2}.
Red surfaces are fast exit. Blue surfaces are fast entrance.
All star-shaped domains drawn here are validated ones replacing $l^s$ in Computer Assisted Result \ref{car-FN-2} by $0.02$.
\end{figure}

\begin{center}
  \begin{table}[h]
    \begin{center}
      \begin{tabular}{|c|c|c|} \hline
	$(u,w)$ & $\begin{pmatrix} [-1.041095921349\times 10^{-6}, 3.362281652217\times 10^{-4}] \\ [-1.0\times 10^{-4}, 0.0] \end{pmatrix}$ & $\begin{pmatrix} -0.1_{400372866998}^{393648241443} \\ [0.0699, 0.0700] \end{pmatrix}$ \\ \hline
	$\lambda_1$ & $0.23_{13774849585}^{30656780011}$ & $0.33_{03365343961}^{18664771135}$ \\
	$u_1$ & $\begin{pmatrix} 0.97_{36944780339}^{44663427829} \\ 0.22_{53340238595}^{70709196773} \end{pmatrix}$  & $\begin{pmatrix} 0.94_{89022030275}^{97320044481} \\ 0.31_{35775665615}^{50630234213}  \end{pmatrix}$ \\
	$\lambda_2$ & $-0.14_{41882997479}^{2476985692}$ & $-0.24_{29806561325}^{14444659639}$ \\
	$u_2$ & $\begin{pmatrix} -0.9_{901936675038}^{895735154845} \\ 0.14_{10723669084}^{26928675384} \end{pmatrix}$  & $\begin{pmatrix} -0.97_{22545527814}^{1540037228} \\ 0.23_{46861516551}^{61252670818}
 \end{pmatrix}$\\
	 \hline
	$(u,w)$ &  $\begin{pmatrix} [0.9997685653125,1.000577904404] \\ [-1.0\times 10^{-4}, 0.0] \end{pmatrix}$ &  $\begin{pmatrix} 0.85_{03581502084}^{12217617279} \\ [0.0699, 0.0700] \end{pmatrix}$ \\ \hline
	$\lambda_1$ & $0.32_{61480456307}^{277846534857}$ & $0.2_{187758769091}^{21146299268}$ \\
	$u_1$ & $\begin{pmatrix} 0.950_{0552084232}^{9113637252} \\ 0.3_{099857738407}^{115662745882} \end{pmatrix}$ & $\begin{pmatrix} 0.97_{62048927254}^{71002135111} \\ 0.21_{36346494841}^{60164344356} \end{pmatrix}$ \\
	$\lambda_2$ & $-0.23_{88987929435}^{72560025185}$ & $-0.1_{322689543302}^{298752159966}$ \\
	$u_2$ & $\begin{pmatrix} -0.97_{31768060885}^{24436494073} \\ 0.23_{08352040574}^{23730048381} \end{pmatrix}$ & $\begin{pmatrix} -0.991_{8632571519}^{1752943809} \\ 0.1_{288226770962}^{310982954026} \end{pmatrix}$ \\
	 \hline
      \end{tabular}
    \end{center}
    \caption{Several validated data associated with Computer Assisted Result \ref{car-FN}.}
    \label{table-FN}
 As in Table \ref{table-Fenichel}, $(\lambda_1, u_1)$ denotes the parameter family of eigenpairs at $(u,w)$ associated with unstable eigenvalue, while $(\lambda_2, u_2)$ denotes the parameter family of eigenpairs  at $(u,w)$ associated with stable eigenvalue.
 
  \end{table}
\end{center}

\begin{figure}[htbp]\em
\begin{minipage}{0.5\hsize}
\centering
\includegraphics[width=7.5cm]{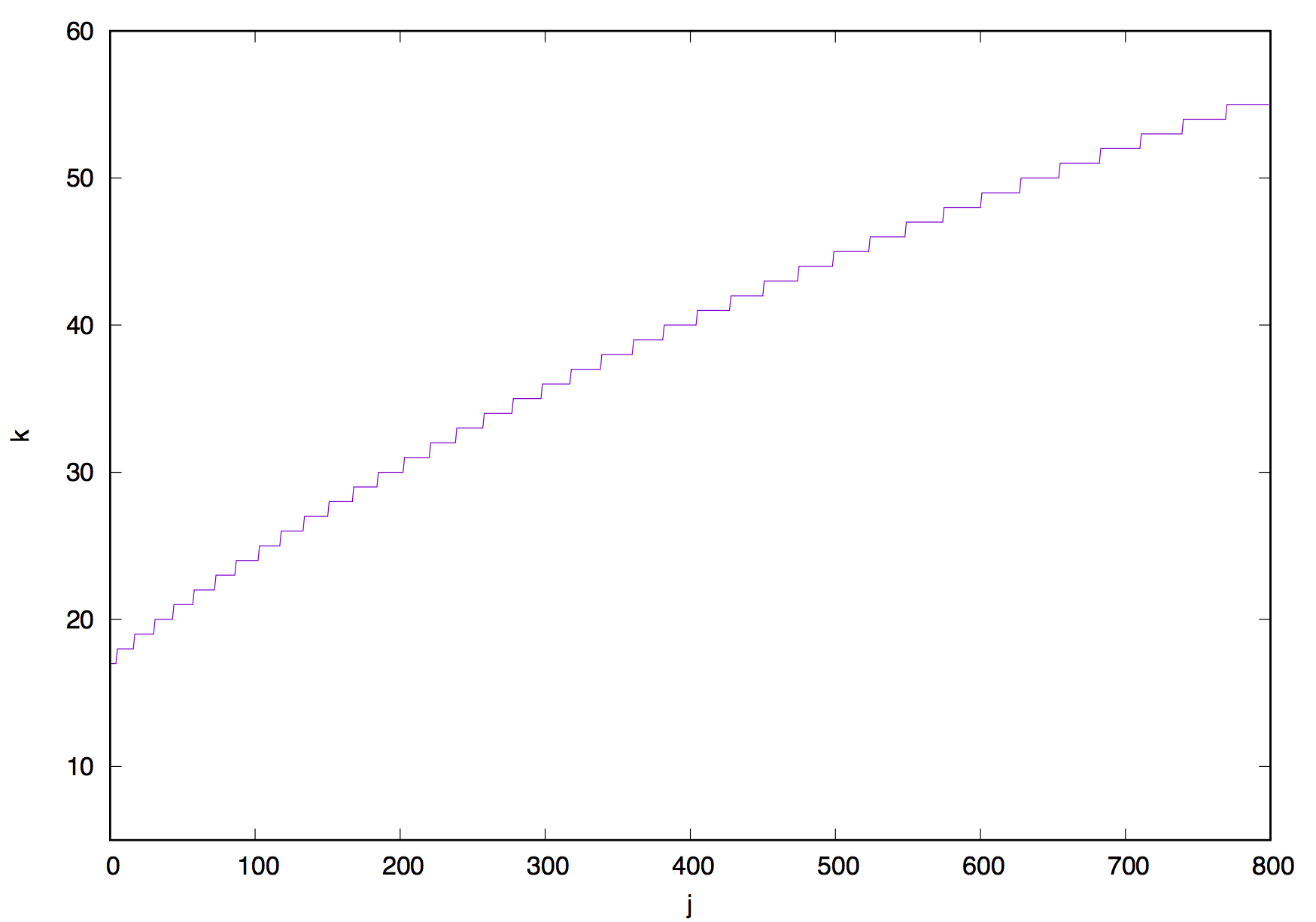}
(a)
\end{minipage}
\begin{minipage}{0.5\hsize}
\centering
\includegraphics[width=7.5cm]{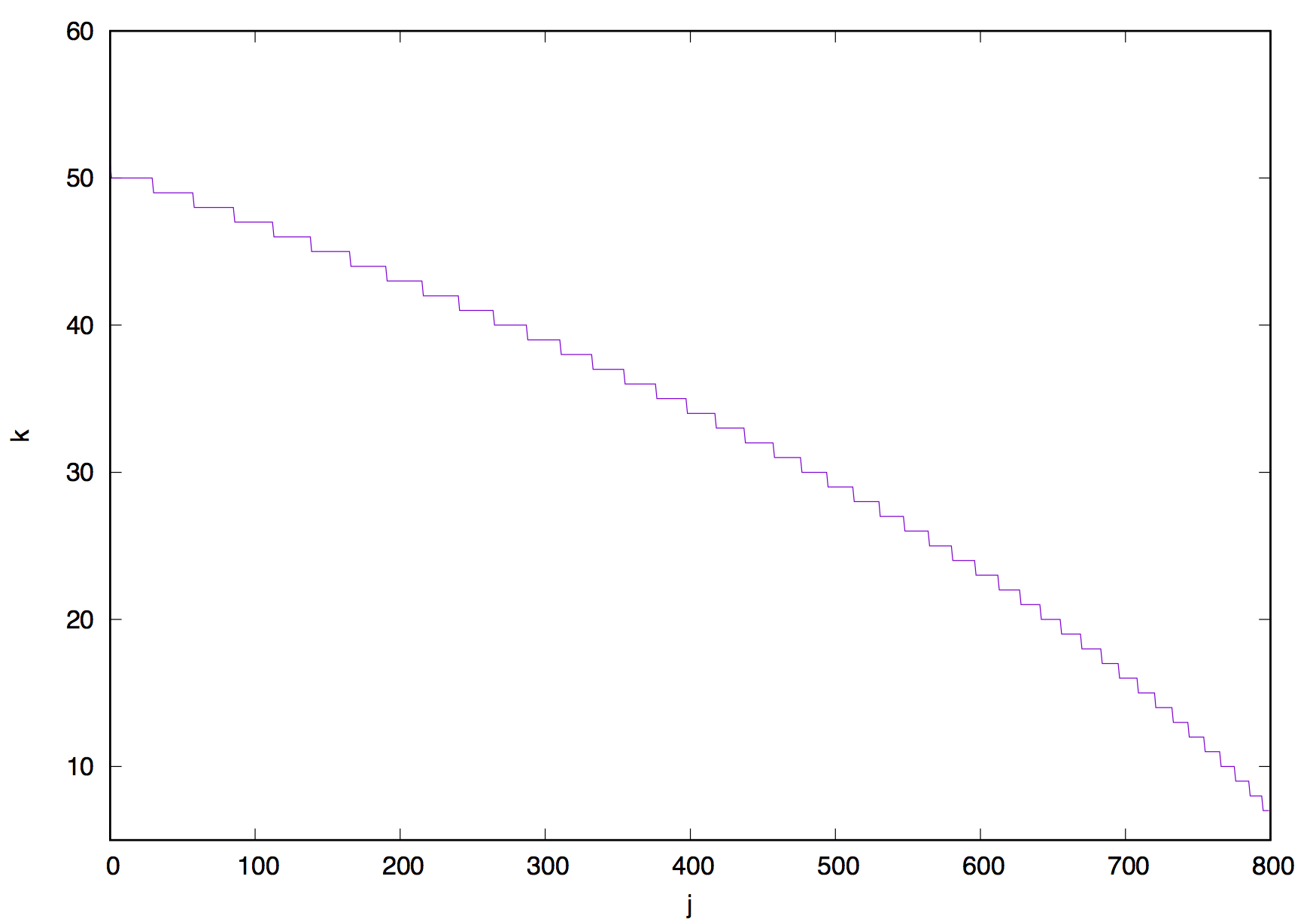}
(b)
\end{minipage}
\caption{Validated local smoothness of $S^\epsilon$ for (\ref{FN})}
\label{fig-fn-smoothness}
(a) : Local smoothness of $S_\epsilon$ near $u=0$. (b) : Local smoothness of $S_\epsilon$ near $u=1$.
\par
In both graphs, the horizontal axis is $w$ and the vertical axis is $k$, where 
$k = \lfloor \min \{k_{su,j}, k_{ss,j}\} \rfloor - 1$
in the target block $\mathcal{D}_j$ with $\pi_y(\mathcal{D}_j) = \{-0.002\} + [j\Delta,(j+1)\Delta]$, $\Delta = 1.0\times 10^{-4}$.
These graphs indicate that the slow manifolds lose their smoothness near fold points $(u_{fold},w_{fold})\approx (0.137060186089, -0.0192716562198), (0.729606480577, 0.0847531377013)$, as described in the geometric singular perturbation theory including non-hyperbolic points (e.g., \cite{KS}).
\end{figure}

\subsection{The predator-prey system}
\label{section-PP}
The third example is the predator-prey system
\begin{equation}
\label{ODE-pp}
\begin{cases}
\dot u = w, &\\
\dot w = -\theta w - uf_1(u,v), &\\
\dot v = \epsilon z, &\\
\dot z = -\epsilon (\theta z + vf_2(u,v)),&
\end{cases}
\end{equation}
where
\begin{equation}
\label{pp-per}
f_1(u,v) := (1-u)(u-v),\quad f_2(u,v) := au-b-v.
\end{equation}
This system is regarded as the traveling wave system of the partial differential equations with the following form:
\begin{equation}
\label{pred-prey}
\begin{cases}
\displaystyle \epsilon \frac{\partial u_1}{\partial t} = \epsilon^2 \frac{\partial^2 u_1}{\partial x^2} + u_1f_1(u), & \\
\displaystyle \frac{\partial u_2}{\partial t} = \frac{\partial^2 u_2}{\partial x^2} + u_2f_2(u), & \\
\end{cases}\quad t>0,\ x\in \mathbb{R},
\end{equation}
where $u = (u_1,u_2)$ is restricted to the nonnegative quadrant $\{u_1, u_2 \geq 0\}$.
The nonlinearities $f_1$ and $f_2$ satisfy suitable assumptions reflecting biological phenomena, which are not stated here. 

The reaction-diffusion system (\ref{pred-prey}) is motivated by predator-prey system from ecology, in which case $u_1$ and $u_2$ represents the living predator and prey, respectively.
This model is an example of (modified) {\em Rosenzweig-MacArthur equations}, and the existence of periodic traveling wave solutions of this system is considered in \cite{GS}, which is based on the Conley index theory.
Arguments in \cite{GS} are revisited in \cite{GGKKMO} and several trajectories of (\ref{pred-prey}) are validated via rigorous numerics for sufficiently small $\epsilon > 0$.
The aim of this section is to validate smooth tubular, conic and star-shaped neighborhoods of slow manifolds with an explicitly given range $[0,\epsilon_0]$.
The validation indicates that our validation procedure is applicable to systems with multi-dimensional fast and slow variables.
Our basic verification strategy is exactly same as preceding subsections.

\begin{car}
\label{car-PP}
Consider (\ref{ODE-pp}).
Set $a = 1.65$, $b = 0.25$ and $\theta = -0.25$.
Then, for all $\epsilon\in [0,1.0\times 10^{-4}]$, tubular neighborhood centered at the slow manifold $S_\epsilon$ near or on the portion of nullcline $S=\{u=0,1, w=0\}$ are validated in the slow variable range $(v,z)\in [0.2, 0.8]\times [-0.6, 0.2]$. 
Validated radii of the neighborhood near $u=0$ are $\eta^u = \eta^s = 1.3\times 10^{-4}$.
Validated radii of the neighborhood near $u=1$ are $\eta^u = \eta^s = 1.5\times 10^{-4}$.
\par
Conic and star-shaped neighborhoods of $S_\epsilon$ are also validated with slope $\{M^\alpha\}_{\alpha=u,s}$ and length $\{l^\alpha\}_{\alpha=u,s}$ of cones given by
\begin{align*}
M^u = 1.1,\ M^s = 1.1,\ l^u = 0.001,\ l^s = 0.005 &\quad \text{ for branch containing }(u,w) = (0,0),\\
M^u = 1.1,\ M^s = 1.1,\ l^u = 0.001,\ l^s = 0.005 &\quad \text{ for branch containing }(u,w) = (1,0).
\end{align*}
Moreover, the slow manifold near $u=0$ is $C^8$ and the slow manifold near $u=1$ is $C^4$.
\end{car}

The local smoothness of slow manifolds as well as their (un)stable manifolds is listed in Figure \ref{fig-PP-smoothness} and, as in Computer Assisted Result \ref{car-FN}, the smoothness of slow manifolds stated in Computer Assisted Result \ref{car-PP} is put as the minimum of these validated smoothness indices.
The right graph in Figure \ref{fig-PP-smoothness} indicates that $S_\epsilon$ near $u=1$ loses smoothness near $v=1$.
Indeed, direct calculations show that the eigenvalue of the fast component of linearized vector field (\ref{ODE-pp}) at $(u,w) = (1,0)$ is 
\begin{equation*}
\begin{pmatrix}
0 & 1 \\
3u^2 - 2(v+1)u + v  & -\theta
\end{pmatrix}_{(u,w)=(1,0), \theta = -0.25}
=
\begin{pmatrix}
0 & 1 \\
1-v  & 0.25
\end{pmatrix}.
\end{equation*}
Eigenvalues are $\lambda = \frac{1}{8} \pm \frac{1}{2}\sqrt{\frac{1}{16}-4(1-v)}$, which becomes $0$ at $v=1$. 
This indicates that normal hyperbolicity of $S_\epsilon$ near $u=1$ breaks near $v=1$.
Our result in Figure \ref{fig-PP-smoothness} reflects this observation.
Also note that eigenvalues become complex in $v \geq 65/64$.

\begin{figure}[htbp]\em
\begin{minipage}{0.5\hsize}
\centering
\includegraphics[width=7.5cm]{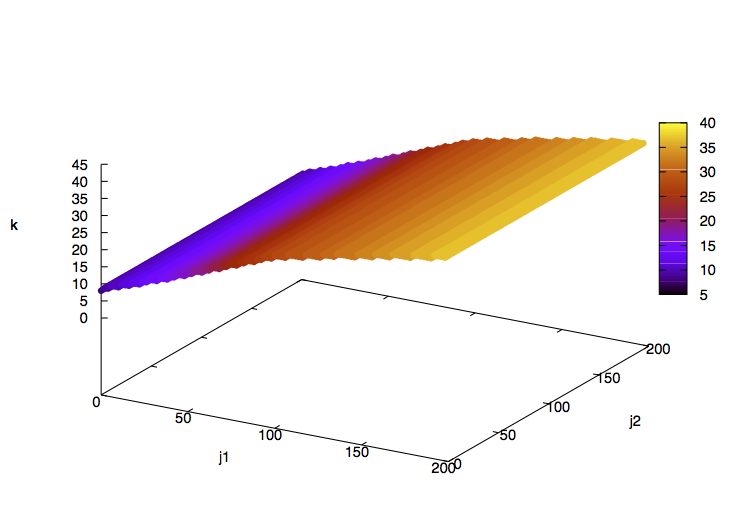}
(a)
\end{minipage}
\begin{minipage}{0.5\hsize}
\centering
\includegraphics[width=7.5cm]{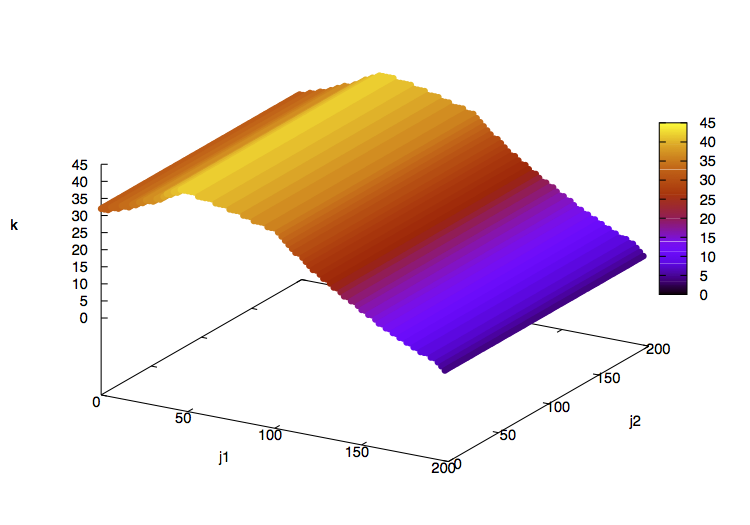}
(b)
\end{minipage}
\caption{Validated local smoothness of $S_\epsilon$ for (\ref{ODE-pp})}
\label{fig-PP-smoothness}
(a) : Local smoothness of $S_\epsilon$ near $u=0$. (b) : Local smoothness of $S_\epsilon$ near $u=1$.
\par
In both graphs, the height describes the smoothness $k$ of $S_\epsilon$ at ${\bf j} = (j_1,j_2)$, where 
$k = \lfloor \min \{k_{su, {\bf j} }, k_{ss, {\bf j}}\} \rfloor - 1$
in the target block $N_{\bf j}$ with $\pi_y(N_{\bf j}) = \{(0.2, -0.6)\} + [j_1\Delta_v,(j_1+1)\Delta_v]\times  [j_2\Delta_z,(j_2+1)\Delta_z]$, $\Delta_v = 0.003, \Delta_z = 0.004$.
These graphs indicate that, as in Figure \ref{fig-fn-smoothness}, the slow manifolds lose their smoothness near a non-hyperbolic curve $\{(u,w,v,z)\mid (u_{nh},w_{nh}, v_{nh})= (1,0,1)\}$.
\end{figure}

\begin{center}
  \begin{table}[h]
    \begin{center}
      \begin{tabular}{|c|c|} \hline
	Computer Assisted Result & Computation time \\ \hline
	\ref{car-Fenichel} & $7.109$ sec. \\
	\ref{car-FN} & $3.552$ sec. \\
	\ref{car-FN-2} & $26$ min. $10.872$ sec. \\
	\ref{car-PP} & $2$ min. $53.418$ sec. \\ \hline
      \end{tabular}
    \end{center}
    \caption{Total computation times of Computer Assisted Results.}
    \label{table-time}
    Computation time for \ref{car-FN} and \ref{car-PP} sums up corresponding times for two branches of slow manifolds.
    Note that, in Computer Assisted Results \ref{car-Fenichel}, \ref{car-FN}, \ref{car-FN-2} and \ref{car-PP}, there are totally $m_0 = 3142, 800, 824736, 40000$ iterations of calculations in Algorithms \ref{alg-bundle} and \ref{alg-nbh}, respectively.
    We easily see that the number of iterations $m_0$ directly affects computation times.
  \end{table}
\end{center}

\section{Conclusion}
In this paper, we have discussed a validation method to construct isolating blocks continuously depending on points on slow manifolds with computer assistance.
Essential arguments in our methodology are summarized as follows:
\begin{itemize}
\item Validations of continuous families of eigenpairs of $f_x(h_\epsilon(y),y,\epsilon)$ on slow manifolds $S_\epsilon = \{x=h_\epsilon(y)\}$.
\item Validations of tubular, conic and star-shaped neighborhoods centered at slow manifolds with computer assistance.
\item Smoothness of slow manifolds as well as tubular neighborhoods with the help of rate conditions.
\end{itemize}
Our procedure realizes nonlinear (diffeomorphic) transformations of tubular neighborhoods of slow manifolds as well as manifolds themselves.
Note that our validations of continuous families of eigenpairs of $f_x(h_\epsilon(y),y,\epsilon)$ on slow manifolds $S_\epsilon = \{x=h_\epsilon(y)\}$ yields vector bundles $V^u_\epsilon$ and $V^s_\epsilon$ over $S_\epsilon$ which reflects the normal hyperbolicity of $S_\epsilon$.
Tubular neighborhood validations like ours will be the basis of computer assisted analysis of fast-slow systems when we want to apply general and abstract arguments in geometric singular perturbation theory to concrete systems.
\par
\bigskip
We conclude this paper providing several applications and perspectives of the current work.
\par
One is the validation of global trajectories of fast-slow systems with multi-dimensional slow variables with an explicit range of $\epsilon$, which is one of our essential motivations of this work as the sequel to \cite{Mat2}.
A numerical validation of global trajectories for (\ref{fast-slow})$_\epsilon$ with an explicit range $[0,\epsilon_0]$ of $\epsilon$ is discussed there.
A topological concept called {\em slow shadowing} plays a role to validate trajectories which shadow slow manifolds.
This concept measures the strength of normal hyperbolicity relative to the speed of slow dynamics.
An essential estimate in this notion is constructed in locally constructed fast-saddle-type blocks, while there is a trade-off of validations, which is mainly because of {\em non-smooth} attachments of local blocks.
Such a trade-off prevents us from validating true trajectories with a range $[0,\epsilon_0]$ with large $\epsilon_0$.
We believe that our present results overcome this difficulty because tubular neighborhoods of slow manifolds are viewed as {\em smooth} attachments of local blocks.
Indeed, there is a numerical validation evidence in \cite{CZ} that (piecewise) smooth neighborhoods of slow manifolds enable us to validate trajectories for (\ref{fast-slow})$_\epsilon$ for $\epsilon \in (0,\epsilon_0]$ with large $\epsilon_0$ enough to bridge the standard analysis.
However, a choice of candidate blocks in \cite{CZ} requires artificial trail and error, which makes applications of preceding works to multi-dimensional slow variables very difficult.
On the other hand, our proposing method does not need heuristic processes for constructing fast-saddle-type blocks under the suitable assumption to eigenvalues.
Our result in this paper will thus bridge ideas in preceding works to fast-slow systems with multi-dimensional slow variables in a systematic way.
\par
We also note that our results in this paper contain constructions of vector bundles with computer assistance.
It is then natural to consider topological invariants of such bundles, say {\em characteristic classes}. 
In our case, these topological invariants measure how twist slow manifolds are.
Twists of bundles over invariant manifolds can involve bifurcations of infinitely many global trajectories (e.g. \cite{D1991twist, D1991traveling, D1993}) and is of great importance for understanding global dynamics.
Note that numerical validation of vector bundles over trajectories are already presented in, e.g. \cite{CL2013, CLM2015} with different motivations from ours.
A well-known application of characteristic classes to dynamical systems is the {\em stability index} (e.g. \cite{Jones1984}) relating to stability of self-similar (traveling) wave solutions $V(\xi)$ of reaction diffusion systems with the linearized operator $L$ along $V(\xi)$.
\par
Finally, we put a comment in a direction to higher order parameterizations of slow manifolds.
As indicated in Section \ref{section-param}, our validated change of coordinates is considered as a parameterization of slow manifolds as well as their (un)stable manifolds up to the linear order term.
Readers who are familiar with parameterization method expect that this parameterization can be generalized to that with {\em higher order terms}.
There are several preceding studies of numerical computation, possibly with rigorous numerics, of invariant manifolds (e.g., \cite{vdBMM2013}), and these studies open the door to rigorous numerics of normal forms around invariant manifolds.
As for the {\em Fenichel normal form} (\ref{Fenichel-normal-form}) well-known in the theory of fast-slow systems, we need to straighten slow manifolds, their (un)stable manifolds as well as all fibers.
We believe that parameterization method opens the door to rigorous numerics of Fenichel normal forms with various numerical applications to fast-slow systems, and that our procedure presented here is a basis of this direction from the  geometric viewpoint.

\section*{Acknowledgements}
This research was partially supported by Coop with Math Program (The Institute of Statistical Mathematics), a commissioned project by MEXT, Japan. 
The author thanks to Prof. Freddy Dumortier and Jason D. Mireles-James for giving him helpful suggestions of current and further directions of this research.

\bibliographystyle{plain}
\bibliography{smooth_block}

\end{document}